\def\bfx{{\bf x}}
\def\bfb{{\bf b}}
\def\bfd{{\bf d}} 
\def\bfu{{\bf u}}
\def\bfw{{\bf w}}
\def\bfv{{\bf v}} 
\def\bfy{{\bf y}} 
\def\bfg{{\boldsymbol g}} 
\def\bfxi{{\boldsymbol \xi}} 
\def\L{{\boldsymbol \Lambda}}
\def\ts{$\tau$-stationary point}
\def\beq{\begin{eqnarray}}
\def\eeq{\end{eqnarray}}
\def\ve{{\rm vec}}
\def\nscp{{\tt SNSCO}} 
\def\rel{{\tt RelA}} 
\def\reg{{\tt RegAC}}
\def\M{{\cal M}}
\def\N{{\cal N}}
\def\K{{\cal K}}
\def\J{{\cal V}}
\def\S{{\cal S}}
\def\CF{{\cal F}}
\def\CE{{\cal C}}
\def\CD{{\cal D}}
\def\A{{\bf A}}
\def\B{{\bf B}}
\def\G{{\bf G}}
\def\Z{{\bf Z}}
\def\W{{\bf W}}
\def\F{{\bf F}}
\def\H{{\bf H}}
\def\P{{\bf P}}
\def\Q{{\bf Q}}
\def\R{{\mathbb R}}
\def\T{{\mathbb T}}
\def\rT{{\rm T}}
\def\rN{{\rm N}}
\def\mz{\Z^{\max}}
\def\zs{\Z^{\downarrow}_s}
\def\bpi{{\rm \Pi}}
\def\rmo{{\rm \Omega}}
\def\eqspace{\arraycolsep=1pt\def\arraystretch}
\def\EMAIL#1{\href{mailto:#1}{#1}}
\def\URL#1{\href{#1}{#1}}         
\begin{document}
\flushbottom


\RUNAUTHOR{S. Zhou, L. Pan, N. Xiu, G. Li}

\RUNTITLE{$0/1$ Constrained Optimization Solving SAA for CCP}

 \TITLE{\Large A $0/1$ Constrained Optimization Solving Sample Average Approximation for Chance Constrained Programming }

\ARTICLEAUTHORS{%
\AUTHOR{Shenglong Zhou}
\AFF{School of Mathematics and Statistics, Beijing Jiaotong University, China, \EMAIL{slzhou2021@163.com}  \URL{}}
\AUTHOR{Lili Pan}
\AFF{Department of Mathematics, Shandong University of Technology, China, \EMAIL{panlili1979@163.com}  \URL{}}
\AUTHOR{Naihua Xiu}
\AFF{School of Mathematics and Statistics, Beijing Jiaotong University, China, \EMAIL{nhxiu@bjtu.edu.cn}  \URL{}}
\AUTHOR{Geoffrey Ye Li}
\AFF{ITP Lab, Department of EEE, Imperial College London, UK, \EMAIL{geoffrey.li@imperial.ac.uk}  \URL{}}
} 

\ABSTRACT{%
Sample average approximation (SAA) is a tractable approach for dealing with chance constrained programming, a challenging stochastic optimization problem. The constraint of SAA is characterized by the $0/1$ loss function which results in considerable complexities in devising numerical algorithms. Most existing methods have been devised based on reformulations of SAA, such as binary integer programming or relaxed problems. However, the development of viable methods to directly tackle SAA remains elusive, let alone providing theoretical guarantees. In this paper, we investigate a general $0/1$ constrained optimization, providing a new way to address SAA rather than its reformulations. Specifically, starting with deriving the Bouligand tangent and Fr$\acute{e}$chet normal cones of the $0/1$ constraint, we establish several optimality conditions. One of them can be equivalently expressed by a system of equations, enabling the development of a semismooth Newton-type algorithm. The algorithm demonstrates a locally superlinear or quadratic convergence rate under standard assumptions, along with nice numerical performance compared to several leading solvers. 
}%


\KEYWORDS{$0/1$ constrained optimization; sample average approximation;  chance constrained programming;     optimality conditions;  semismooth Newton method; 	locally  convergence rate}

\maketitle

\section{Introduction}
 Chance constrained programming (CCP) is an efficient tool for decision-making in uncertain environments to hedge risk and thus has been extensively studied recently \cite{adam2016nonlinear,geletu2017inner,curtis2018sequential,
 sun2019Convergence, pena2020solving}.
It has a wide range of applications, such as supply chain management \cite{lejeune2007efficient}, optimization of chemical processes \cite{henrion2003optimization}, surface water quality management \cite{takyi1999surface}, just naming a few. A simple version of CCP problem takes the form of
\begin{equation}\label{CCP}
\underset{{\bfx\in{ \Omega}}}{\min}~ f(\bfx),~~{\rm s.t.}~ \mathbb{P}\{\bfg(\bfx,\bfxi)\leq {\bf0}\}\geq 1-\alpha, \tag{CCP}
\end{equation} 
where $f:\R^{K}\rightarrow\R$ is continuously differentiable, $\bfg(\bfx,\bfxi)=(g_1(\bfx,\bfxi),\ldots,g_M(\bfx,\bfxi))^\top:\R^{K}\times \Xi\rightarrow\R^M$ with $g_m(\cdot,\bfxi)$ being continuously differentiable on $\R^{K}$ for each $m$,  $\bfxi$ is a random vector with a probability distribution supported on set $\Xi\subset \R^D$,  ${\bf0}$ is the zero vector,    $\alpha$ is a confidence parameter chosen by the decision maker, typically near zero, e.g. $\alpha=0.01$ or $\alpha=0.05$, { and  $\Omega\subseteq \R^{K}$ is a closed and convex set.} 
We note that $\{\bfg(\bfx,\bfxi)\leq {\bf0}\}$ represents the feasible region described by a group of constraints subject to uncertainty $\bfxi$. The constraint is called single chance constraint if $M=1$ \cite{charnes1958cost} and joint chance constraint if $M>1$ \cite{miller1965chance}. Some general theory can be found in \cite{prekopa1973contributions,prekopa2013stochastic} and the references therein.

\subsection{Related work}

Problem \eqref{CCP} is difficult to solve numerically in general for two reasons. The first reason is the hardness of computing quantity $\mathbb{P}\{\bfg(\bfx,\bfxi)\leq {\bf0}\}$ for a given $\bfx$, since it requires multi-dimensional integration. The second reason is the non-convexity of the feasible set even if $\bfg(\cdot,\bfxi)$ is convex. Therefore, to solve the problem, one common strategy is to make some assumptions on the distributions of $\bfxi$.
For example, for the case of $\bfg(\bfx)=\A(\bfxi)\bfx+\bfb(\bfxi)$ and the single chance constraint, the feasible set is convex if $\alpha<0.5$ and $\bfxi$ has a nondegenerate multivariate normal distribution \cite{kataoka1963stochastic}, or if
$(\A(\bfxi);\bfb(\bfxi))$ has a symmetric log-concave density
\cite{lagoa2005probabilistically}. Moreover, if $\bfxi$ has an elliptically symmetric distribution, then the feasible set can be expressed as a second-order cone constraint  \cite{henrion2007structural}.  Furthermore,  the gradient of the linear joint chance constraint can be derived in an explicit formula under a multivariate  Gaussian distribution \cite{henrion2012gradient}.   However, when not making assumptions on $\bfxi$, many approaches leverage sampling to approximate the probabilistic constraint. 

  {\bf a)  SAA approaches.}  The fundamental idea of SAA to address CCP lies in using the empirical distribution function to approximate the true distribution function. More specifically, let $\bfxi_1,\ldots,\bfxi_N$ be independent and identically distributed samples of $N$ realizations of random vector $\bfxi$. As the constraint in \eqref{CCP} is equivalent to {$\mathbb{P}\{{\max}_{m=1,2,\ldots,M} g_m(\bfx,\bfxi) >0 \}\leq \alpha$},  through its empirical distribution function, SAA  takes the form of
\begin{equation}\label{SAAP} 
\begin{array}{l}
\underset{{\bfx\in{ \Omega}}}{\min}~ f(\bfx),~~{\rm s.t.}~ \frac{1}{N}\sum_{n=1}^N \ell_{0/1}  ( {\max}_{m=1,2,\ldots,M} g_m(\bfx,\bfxi_n)   )\leq \alpha,
 \end{array} \tag{SAA}\end{equation}
where  
  $\ell_{0/1}(t)$ is the $0/1$ loss function  \cite{friedman1997bias, LL2007, hastie2009elements} defined as
\begin{eqnarray}\label{0/1-loss-func}
\ell_{0/1}(t):=\left\{\begin{array}{ll}
1,&~t>0,\\[1ex] 
0,&~t\leq0.
\end{array}\right.
\end{eqnarray}
{The function is also known as the (Heaviside) step function \cite{osuna1998reducing, evgeniou2000regularization}}.  We point out that \eqref{SAAP}  with $\alpha>0$ is always non-convex, but it gains popularity since it requires relatively few assumptions on the structure of \eqref{CCP} or the distribution of $\bfxi$  \cite{luedtke2014branch}. Therefore,  there is an impressive body of work on developing numerical algorithms \cite{pagnoncelli2009sample,pena2020solving,shapiro2021lectures} and establishing asymptotic convergence \cite{luedtke2008sample,sun2014asymptotic,sun2019Convergence} for \eqref{SAAP}. 
 \begin{itemize}[leftmargin=17pt]
 \item \textit{Binary integer programming (BIP).} 
 One way to solve \eqref{SAAP} is to reformulate it as  BIP \cite{ahmed2008solving}.  
For example,   a single chance constrained problem has been investigated in \cite{adam2016nonlinear} and the integer variables in the BIP were relaxed as continuous ones. In \cite{luedtke2010integer}, the authors have solved a linear \eqref{SAAP} problem (i.e., $f$ and $g$ are linear) by solving its BIP reformulation. Moreover,   a branch-and-cut decomposition method \cite{luedtke2014branch} and a branch-and-bound approach \cite{beraldi2010exact} have been proposed to deal with the BIP reformulations.  
\item \textit{Nonlinear programming (NLP).}   
Alternative approaches tackle \eqref{SAAP} by formulating it as NLP problems. For instance, the author in \cite{curtis2018sequential} proposed a cardinality-constrained NLP problem solved by a sequential algorithm. The algorithm comprised computing quadratic optimization subproblems with linear cardinality constraint and was proven to converge to a stationary point of a novel penalty function.  In \cite{pena2020solving}  the constraint in \eqref{SAAP} was converted to a quantile constraint, resulting in an NLP problem to be solved by a trust-region method.
 
 \end{itemize}

It is noted that most of the aforementioned work focused on surrogates of \eqref{SAAP}, without providing thorough optimality analysis or directly developing algorithms for \eqref{SAAP}.

{\bf b) Scenario approaches.} Differing from SAA relaxing the probabilistic constraint by the empirical distribution function, scenario approaches aim at solving the following problem,
\begin{equation}\label{SAS}
 \min_{{\bfx\in{ \Omega}}}~ f(\bfx),~~{\rm s.t.}~    g_m(\bfx,\bfxi_n)   \leq 0,~ m=1,\ldots,M,~ n=1,\ldots,N.\end{equation}
In fact, this model is a special case of \eqref{SAAP}, corresponding to $\alpha=0$, and is quite strict. It {has been shown} in \cite{de2004constraint,calafiore2005uncertain,erdougan2006ambiguous}
  that when {the sample size} $N$ is large enough,  the constraints in \eqref{SAS} can ensure the satisfaction of the chance constraint {with a high probability. However, such approaches suffer from several drawbacks \cite{hong2011sequential}. Moreover, one of our main results show that an optimal solution to \eqref{CCP} might be infeasible to \eqref{SAS}, which has been verified by empirical experiments in \cite{luedtke2008sample}. } 
  
{\bf c)  Other reformulations.} To avoid solving mixed-integer programming, some other approximation methods have been proposed.    For instance, in cases where both $f$ and $\bfg(\cdot,\bfxi)$ are convex, the authors \cite{nemirovski2007convex} introduced a general class of convex conservative approximations for the chance constraints. Additionally, the authors \cite{hong2011sequential} employed the difference-of-convex (DC) functions to approximate the step function, deriving a DC approximation of problem \eqref{SAAP} solved by a gradient-based Monte Carlo method. For non-linear CCP,  a smooth approximation approach has been proposed to relax the CCP problem by two parametric NLP problems that can be tackled using NLP solvers\cite{geletu2017inner}. 
Moreover, a large body of work has been dedicated to developing approaches for nonlinear CCP based on the spherical radial decomposition of elliptically distributed random vectors. These approaches leverage specific information about the distributions, thereby leading to significantly reduced variance when estimating values and gradients of probability functions. Typical distributions include  Gaussian or Gaussian-like distributions \cite{van2014gradient,van2017sub,hantoute2019subdifferential}, elliptically symmetric distributions \cite{van2018sub}, and log-normal and Student's t-distributions \cite{van2017second}.  One can refer to    \cite{gotzes2016quantification, gonzalez2017joint, van2019generalized, heitsch2019probabilistic, van2020gradient, farshbaf2020optimal, berthold2022algorithmic, van2022generalized} for more details.

\subsection{The main model}
To simplify the constraint in \eqref{SAAP}, we define {a measure} by
\beq\label{heaviside}
\begin{array}{l}
\|\Z\|^+_0:= \sum_{n=1}^N \ell_{0/1}\left(\mz_{:n}\right),
\end{array}
\eeq
where  matrix $\Z\in\R^{M\times N}$ and $\mz_{:n}$ denotes the maximum entry of the $n$th column  of $\Z$, namely,
\begin{eqnarray*} 
  \arraycolsep=1pt\def\arraystretch{1.5}
\begin{array}{lcllll}
\mz_{:n}&:=&\max_{m=1,\ldots,M}Z_{mn}.
\end{array}
\end{eqnarray*}  
One can observe that $\|\Z\|^+_0$ counts the number of columns in $\Z$ with positive maximum values. Motivated by \eqref{SAAP}, in this paper, we study  $0/1$ (or step) constrained optimization (SCO):
\begin{equation}\label{SCP}
{ \min_{\bfx\in{ \Omega}}~ f(\bfx),~~{\rm s.t.}~ \bfx\in\CF:=\{\bfx\in\R^K:\|\G(\bfx)\|^+_0\leq s\},}\tag{SCO}
\end{equation}
where $\G(\bfx) :\R^{K}\rightarrow\R^{M\times N}$ is continuously differentiable  and $s\ll N$ is a given positive integer. Hereafter, the entry in the $m$th row and $n$th column of $\G(\bfx)$ is denoted by $G_{mn}(\bfx)$, {and we always assume that $\Omega\cap\CF\neq\emptyset$.} The above problem is NP-hard due to the discrete nature of the $0/1$ loss function. {However, it can be applied to deal with various applications. For example, if let $$\G(\bfx)=(\bfg(\bfx,\bfxi_1),\bfg(\bfx,\bfxi_2), \ldots, \bfg(\bfx,\bfxi_N))$$ and $s=\lceil \alpha N\rceil$, the minimal integer no less than $\alpha N$, then model \eqref{SCP} turns to \eqref{SAAP}. We emphasize that besides addressing \eqref{SAAP}, model \eqref{SCP} can also handle many other deterministic problems, such as support vector machine  \cite{CV95, wang2021support} and one-bit compressed sensing \cite{BB08,zhou2022computing} by taking $\G(\bfx)=\A\bfx+\bfb$, $M=1$, and $\Omega=\R^K$, where $\A$ and $\bfb$ are a deterministic matrix and vector without involving the realizations of random variable $\bfxi$. In this regard, (SCO) is more general than (SAA).}

\subsection{Contributions}
To the best of our knowledge, this is the first paper to directly address \eqref{SAAP} with 0/1 constraints and to provide thorough theoretical guarantees along with a viable numerical algorithm. The main contributions of this paper are threefold.

\begin{itemize}[leftmargin=17pt]

\item[1)] \emph{Variational properties and optimality conditions.} Despite the challenges posed by the $0/1$ loss function, we manage to build some theoretical properties. We begin by calculating the projection of a point onto set $\S$ and deriving the Bouligand tangent and Fr$\acute{e}$chet normal cones of sets $\S$ and $\CF$, see  Propositions \ref{pro-1} and \ref{tangent-S}, where 
\begin{eqnarray}
\label{l01-s}
\S&:=&\left\{\Z\in\R^{M\times N}:~\|\Z\|^+_0\leq s\right\}.
\end{eqnarray}
The properties on $\CF$ further allow us to conduct comprehensive optimality analysis. Specifically, we introduce a KKT point and define a $\tau$-stationary point,  revealing their relationships to local minimizers of problem \eqref{SCP}. The relationships are shown in \eqref{relation}, where conditions and cases are outlined in  Corollary \ref{coro-relation}. Furthermore,   we also introduce a binary KKT (BKKT) point to investigate the optimality conditions of a binary integer programming, an equivalent reformulation of problem \eqref{SCP}. The relationships among the BKKT point and the other points are also illustrated in \eqref{relation}, from which we can conclude that a $\tau$-stationary point is a better solution than a KKT or BKKT point. 
 \begin{equation} \label{relation}
  \fbox{\text{$
 \eqspace{1.35}
 \begin{array}{lcl}
 &\text{Local  minimizer}& \\
&\scriptsize{(\textbf{Cond 1})}\downharpoonleft \upharpoonright \scriptsize{(\textbf{Cond 2})}&~~~ \searrow\scriptsize{(\textbf{Cond 3})} \\
\tau\text{-stationary point}~~\xrightleftharpoons[(\textbf{Case 1}~or~\textbf{Case 2})]{}
~~&\text{KKT point}&~~\longrightarrow~~\text{BKKT point}\\
\end{array}$} }\end{equation}

\item[2)]   \emph{A semismooth Newton-type method with locally quadratic convergence.} 
An advantageous property of a $\tau$-stationary point is its equivalence to a system of equations, which allows us to leverage the smoothing Newton method for solving \eqref{SCP}, dubbed as \nscp. We then prove that the proposed algorithm has a locally superlinear or quadratic convergence rate under the standard assumptions, as outlined in Theorem \ref{the:quadratic}. The endeavour of attaining this result underscores the non-triviality of the proof. 

\item[3)] \emph{A nice numerical performance.}
 We compare \nscp\ with several selected algorithms and {\tt GUROBI} for solving norm optimization problems. It is capable of delivering solutions of comparable quality to these algorithms. In particular, for instances of large sizes, it runs much faster than the others without compromising solution quality.
 
\end{itemize}

\subsection{Organization}
The paper is organized as follows. In Section \ref{sec:sta}, we establish several statistical properties of \eqref{SCP} when it reduces to \eqref{SAAP}.  Section \ref{sec:sets} involves the calculations of the projection of one point onto set $\S$, as well as the determination of the Bouligand tangent and normal cones of $\CF$ and $\S$. In Section \ref{sec:opt},  we define KKT points and $\tau$-stationary points, both of which serve as optimality conditions of \eqref{SCP}.  Moreover, we also introduce BKKT points to investigate the optimality condition for a BIP reformulation of \eqref{SAAP}. Furthermore, the relationships among these three kinds of points and local minimizers are revealed. 
In Section \ref{sec:Newton}, by equivalently rewriting the $\tau$-stationary point as a system of $\tau$-stationary equations, we develop a semismooth Newton method, \nscp, to solve the equations and establish its locally superlinear and quadratic convergence rates.  In Section \ref{sec:numerical}, we implement \nscp\ to solve norm optimization problems and compare it with several leading solvers.  Concluding remarks are given in the last section.

\subsection{Notation}
We end this section by defining some notation. Denote 
\begin{eqnarray*}
\begin{array}{cll}
\M:=\{1,2,\ldots,M\}~~\text{and}~~\N:=\{1,2,\ldots,N\}.
\end{array}
\end{eqnarray*}
Given a subset $T\subseteq \N$, its cardinality and complement  are $|T|$ and $\overline T:=\N\setminus T$. For a scalar $a\in\R$, $\lceil a\rceil$ represents the smallest integer no less than $a$. For two matrices  $\A=(A_{mn})_{M \times N}\in\R^{M \times N}$ and $ \B=(B_{mn})_{M \times N}\in\R^{M \times N}$, we  denote
\begin{eqnarray}\label{notation-v}
\begin{array}{cll}
  \A ^+ := (\max\{A_{mn},0\})_{M \times N}~~\text{and}~~
   \A ^- := (\min\{A_{mn},0\})_{M \times N}.
\end{array}
\end{eqnarray}
and their inner product is $\langle \A,\B\rangle:=\sum_{mn}A_{mn}B_{mn}$. Moreover, let
${\bf0}\geq \A \perp \B \geq {\bf0}$ stand for $0\geq A_{mn}, B_{mn} \geq 0$, and $A_{mn}B_{mn}=0$ for any $m\in\M,n\in\N$. We use $\|\cdot\|$ to represent the Euclidean norm for vectors and Frobenius norm for matrices. A positive definite matrix is written as $\A\succ  {\bf0}$. The neighbourhood of $ \bfx\in\R^{K} $ with a radius $\epsilon>0$ is 
\begin{equation*}
{\mathbb N}( \bfx ,\epsilon) := \{\bfv\in\R^{K}: \|\bfv- \bfx \|<\epsilon\}.
\end{equation*} 
For matrix $\Z\in\R^{M\times N}$, let $\Z_{\J}$ be the sub-part indexed by $\J\subseteq \M\times {\N}$, namely $\Z_{\J}:=\{Z_{mn}\}_{(m,n)\in \J}$. Particularly,  let $\Z_{:T}$ stand  for the sub-matrix containing columns indexed by $T\subseteq {\N}$, and $\Z_{:n}$  represent the $n$th column of $\Z$.  In addition,  we define the following useful index  sets:
\begin{eqnarray}\label{notation-z}
  \arraycolsep=1pt\def\arraystretch{1.5}
\begin{array}{lcllll}
 \Gamma_+&:=&\left\{n\in{\N}:~\mz_{:n} >0  \right\},\\
 \Gamma_0&:=&\left\{n\in{\N}:~ \mz_{:n} = 0 \right\},\\
 \Gamma_-&:=&\left\{n\in{\N}:~ \mz_{:n} < 0 \right\},\\
 V_{\Gamma}&:=&\left\{(m,n)\in \M\times \Gamma:~{Z}_{mn} = 0\right\},~~\Gamma\subseteq {\N}.
\end{array}
\end{eqnarray}
We point out that $\Gamma_+, \Gamma_0, \Gamma_-$, and $V_{\Gamma}$  depend on $\Z$, but we will drop their dependence if no additional explanations are provided. Recalling \eqref{heaviside}, the above definitions indicate  that
\begin{eqnarray} \label{0+norm}
 \begin{array}{lll}
\|\Z\|^+_0=|\Gamma_+|.
\end{array}
\end{eqnarray}
Let $\Z^\downarrow_r$ be the $r$th largest element of $\{\|(\Z_{:1})^+\|,\|(\Z_{:2})^+\|,\ldots,\|(\Z_{:N})^+\|\}$. Then
\begin{eqnarray}
\zs \left\{\begin{array}{lll}
 =0, &~\text{if}~\|\Z\|^+_0<s,\\[1ex]
 >0, &~\text{if}~ \|\Z\|^+_0\geq s.
\end{array}\right.
\end{eqnarray}
For $\G(\bfx):\R^{K} \rightarrow\R^{M\times N}$ and an index set $\J\subseteq\M\times\N$, we denote   $\nabla_\J \G(\bfx)\in\R^{K\times |\J|}$ as a matrix with columns consisting of $\nabla G_{mn}(\bfx)$ with $(m,n)\in \J$, namely,
$$\nabla_\J \G(\bfx) : =\{\nabla G_{mn}(\bfx):~(m,n)\in \J \}\in\R^{K\times |\J|}.$$
For notational simplicity, we write
\begin{eqnarray*}
\begin{array}{lll}
\sum G_{mn}:=\sum_{(m,n)\in\M\times\N}G_{mn},\qquad \sum_\J G_{mn}:=\sum_{(m,n)\in\J}G_{mn}.
\end{array}
\end{eqnarray*}
Let $\varphi:\R^n\to\R^m$ be  a locally Lipschitz continuous function. According to Rademacher's Theorem, $\varphi$ is differentiable almost everywhere. Denote the set of points at which $\varphi$ is differentiable by $D_\varphi$. 
 Then the  Clarke generalized Jacobian \cite{clarke1990optimization} of $\varphi$ at $\bfx\in\R^n$ is
 $$
 \begin{array}{l}
 \partial \varphi(\bfx)={\rm co}\{\lim_{\bfx^k\in D_\varphi, \bfx^k\to\bfx} \nabla\varphi (\bfx^k)\},
 \end{array}$$
 where ${\rm co}(\Omega)$ stands for the convex hall of set $\Omega$. 
Let ${\cal O} \subseteq\R^n$ be an open set  and $\varphi:{\cal O} \to\R^m$ be a locally Lipschitz
continuous function. We say that $\varphi$ is semismooth at $\bfx\in{\cal O}$ if it is directionally differentiable at $\bfx$ and   for any  $\triangle\bfx$ with $\triangle\bfx\to0$, and   $\H\in \partial \varphi(\bfx+\triangle\bfx)$,
\begin{equation}
\label{def-semismooth}
\varphi(\bfx+\triangle\bfx)- \varphi (\bfx)- \H \triangle\bfx=o(\|\triangle\bfx\|).
\end{equation}
Furthermore, if the above equation is replaced by
\begin{equation}
\label{def-strong-semismooth}
\varphi(\bfx+\triangle\bfx)- \varphi (\bfx)- \H \triangle\bfx=O(\|\triangle\bfx\|^2),
\end{equation}
then $\varphi$ is said to be  strongly semismooth at $\bfx$. 

 \section{Statistical Properties}\label{sec:sta}
In this section, we shall see how to set $s$ to guarantee an optimal solution to problem \eqref{CCP}  to be feasible to problem \eqref{SCP} when \eqref{CCP} reduces to \eqref{SAAP} (namely when taking $G_{mn}(\bfx)=g_m(\bfx,\bfxi_n)$). To proceed with that, let 
\begin{equation*} 
\begin{array}{lll}
Y(\bfxi;\bfx) :=  - {\max}_{m\in\M} g_m(\bfx,\bfxi). 
\end{array}\end{equation*}
We then define the probability density and empirical distribution function of $Y(\bfxi;\bfx)$ by
\begin{equation*} 
\begin{array}{lll}
F(t;\bfx) := \mathbb{P}\{ Y(\bfxi;\bfx) < t\}, \qquad F_N(t;\bfx):=\frac{1}{N} |\{n\in\N: Y(\bfxi_n;\bfx)< t\}|.
\end{array}\end{equation*}
Consequently, we have
\begin{equation} \label{F0-FN0}
\eqspace{1.35}
\begin{array}{lll}
F(0;\bfx) &=&  \mathbb{P}\{ {\max}_{m\in\M}  g_m(\bfx,\bfxi) > 0 \}, \\
 F_N(0;\bfx)&=&\frac{1}{N}\sum_{n=1}^N\ell_{0/1} ( {\max}_{m\in\M}  g_m(\bfx,\bfxi_n)  ) = \frac{1}{N} \|\G(\bfx)\|_0^+.
\end{array}\end{equation}
Let $\bfx^*$ be an  optimal solution to (\ref{CCP}) and define 
$$\alpha_*:=\mathbb{P}\{ {\max}_{m\in\M}  g_m(\bfx^*,\bfxi)  >0\} = F(0;\bfx^*).$$
Using these notation, we derive the following results.
\begin{theorem}\label{s-lower-bd}  Suppose that an  optimal solution  $\bfx^*$ to (\ref{CCP})  satisfies  $\alpha_*>0$. Then 
 \begin{equation} \label{away-asy-n}
\mathbb{P} \left\{ \|\G(\bfx^*)\|_0^+ > (1-\sqrt{\nu})\alpha_* N  \right\} \geq 1- 2 {\rm exp}(-2 \nu \alpha_*^2 N),
\end{equation}
for some $\nu\in(0,1)$. If   $s  \geq (\sqrt{\nu}+1)\alpha_* N$, then  
 \begin{equation} \label{feasible-asy-n}
\mathbb{P} \left\{  \|\G(\bfx^*)\|_0^+ \leq s \right\} \geq 1- 2 {\rm exp}(-2   \nu \alpha_*^2 N).
\end{equation}
\end{theorem}
\proof{Proof} 
It follows from \cite[Theorem 1]{massart1990tight} that
\begin{equation} \label{F0-FN-UPP-BD}
\begin{array}{lll}
\mathbb{P} \{ \sqrt{N } \sup_t |F_N(t;\bfx^*)  -F(t;\bfx^*)|\geq \lambda  \} \leq 2{\rm exp}(-2\lambda^2)
\end{array}\end{equation}
for any given $\lambda >0$, which by letting $\lambda= \alpha_* \sqrt{\nu N}$ with $\nu\in(0,1)$  allows us to  obtain
\begin{eqnarray*} 
\eqspace{1.35}
\begin{array}{lll}
\mathbb{P} \{ \|\G(\bfx^*)\|_0^+ >  (1-\sqrt{\nu})\alpha_* N \} &=&
\mathbb{P} \{F_N(0;\bfx^*)   > (1-\sqrt{\nu})\alpha_*  \}\\
 &=&
\mathbb{P} \{ \sqrt{N} (F(0;\bfx^*) - F_N(0;\bfx^*) )< \alpha_* \sqrt{\nu N} \}   \\
 &=&
1-\mathbb{P} \{ \sqrt{N} (F(0;\bfx^*) - F_N(0;\bfx^*) ) \geq \alpha_* \sqrt{\nu N} \}   \\
&\geq&1-2 {\rm exp}(-2  \nu \alpha_*^2 N),
\end{array}\end{eqnarray*}
where the  first two equalities are from \eqref{F0-FN0} and $\alpha_*=F(0;\bfx^*)$, and the last inequality holds because of   \eqref{F0-FN-UPP-BD}.  Similar reasoning yields that
\begin{eqnarray*} 
\eqspace{1.35}
\begin{array}{lll}
\mathbb{P} \{ \|\G(\bfx^*)\|_0^+ > s \}&=&
\mathbb{P} \{ \|\G(\bfx^*)\|_0^+ \geq s+1 \}\\
 &=&
\mathbb{P} \{ F_N(0;\bfx^*) \geq (s+1)/N \}   \\
&\leq&\mathbb{P} \{ F_N(0;\bfx^*) \geq  (\sqrt{\nu}+1) \alpha_* \} \\
&=&\mathbb{P} \{ \sqrt{N}   (F(0;\bfx^*) - F_N(0;\bfx^*)  ) \geq    \alpha_*\sqrt{\nu N} \} \\
&\leq&2 {\rm exp}(-2 \nu \alpha_*^2 N),
\end{array}\end{eqnarray*}
where  the first inequality is from  $s  \geq (\sqrt{\nu}+1)\alpha_* N $. The proof is finished.
\Halmos\endproof
\begin{remark}From Theorem \ref{s-lower-bd}, when $\alpha_*>0$ for an optimal solution $\bfx^*$ to \eqref{CCP}, it holds $\|\G(\bfx^*)\|_0^+ >  (1-\sqrt{\nu}) \alpha_* N>0$ in high probability, which indicates that $G_{mn}(\bfx^*)=g_m(\bfx^*,\bfxi_n) \leq 0$ may not be satisfied for any $m\in\M$ and $n\in\N$, thereby $\bfx^*$ is unlikely feasible to problem \eqref{SAS}. In addition, if we set $s$ to be away from $0$,   optimal solution $\bfx^*$ could be feasible to \eqref{SCP}. In this regard, solving \eqref{SCP} with a positive $s$ admits its advantage in contrast to solving \eqref{SAS}. 

It is noted that similar results have been well established in \cite{luedtke2008sample,pagnoncelli2009sample}. For instance,  \cite[Theorem 3]{luedtke2008sample} and its proof respectively show that $ \widehat{f}\leq f(\bfx^*)$  and $\|\G(\bfx^*)\|_0^+ \leq s$ hold with   probability ${1- {\rm exp}(-2  (s/N-\alpha)^2 N)}$ if $s/N>\alpha $, where $\widehat{f}$ is the optimal function value of the SAA problem.   However, \eqref{feasible-asy-n} can also allow us to conclude that $ \widehat{f}\leq f(\bfx^*)$ holds with high probability due to $\mathbb{P} \{ \widehat{f}\leq f(\bfx^*)\} \geq \mathbb{P} \{  \|\G(\bfx^*)\|_0^+ \leq s \}$. Differing from \cite[Theorem 3]{luedtke2008sample},  the probability bound in Theorem \ref{s-lower-bd} is established based on $\alpha_*$ (which depends on optimal solution $\bfx^*$) rather than $\alpha$. Moreover, Theorem \ref{s-lower-bd} also provides a positive lower bound of $\|\G(\bfx^*)\|_0^+$. 

 \end{remark}

\section{Properties of Sets $\S$ and $\CF$}\label{sec:sets}
In this section, we direct our attention to feasible set $\CF$ in \eqref{SCP} and $\S$ in \eqref{l01-s}, with the aim of deriving the projection of a point onto $\S$, as well as determining tangent and normal cones of these two sets. Before proceeding, we introduce a key set that remains significant throughout the paper.  Let $\Gamma_+, \Gamma_0$, and $\Gamma_-$ be given by \eqref{notation-z}. We order indices in $\Gamma_+ :=\{i_1,i_2,\ldots,i_{|\Gamma_+|}\}$ as follows,
\begin{eqnarray} 
\begin{array}{l}
 \|(\Z_{:i_1})^+\| \geq  \|(\Z_{:i_2})^+\| \geq \ldots \geq  \|(\Z_{:i_{r}})^+\| \geq  \ldots \geq  \|(\Z_{:i_{|\Gamma_+|}})^+\|>0.
\end{array} 
\end{eqnarray} 
Here, $ r:=\min\{s, |\Gamma_+|\}$. We note that $\|(\Z_{:n})^+\|=0$ for any $n\in\Gamma_0\cup\Gamma_-$, so the definition of $\Z_r^{\downarrow}$ implies that $\|(\Z_{:i_{r}})^+\|=\Z_r^{\downarrow}$.   Based on this, we define the following set
\begin{eqnarray}
\label{T-z}
\T(\Z;s):=\Big\{(\Gamma_+\setminus\Gamma_{s})\cup\Gamma_0 :~ 
  \Gamma_{s}\subseteq  \Gamma_+,~ |\Gamma_{s}|=r,~\min\limits_{n\in \Gamma_{s}} \|(\Z_{:n})^+\|\geq \max\limits_{n\notin \Gamma_{s}}\|(\Z_{:n})^+\|
\Big\}.~~~
\end{eqnarray}
Note that $\Gamma_{s}$ contains  $r=\min\{s, |\Gamma_+|\}$ indices corresponding to the first $r$ largest values in $\{\|(\Z_{:i})^+\|: i\in\Gamma_+\}$, i.e., $\Gamma_{s}=\{i_1,i_2,\ldots,i_{r}\}$, and it may not be unique. For any $T\in\T({\Z};s)$, it has
\beq\label{z0+-Gamma+} 
\eqspace{1.35} 
\begin{array}{ll}
\overline T={\N}\setminus T={\Gamma_{s} \cup  \Gamma_-}.
\end{array}
\eeq 
In addition, if $\|\Z\|^+_0\leq s$, then $ \|\Z\|^+_0 =|\Gamma_+| = r$ and thus $\Gamma_{s}=\Gamma_+$, thereby $\T(\Z;s)=\{\Gamma_0\}$ and $\overline T=\Gamma_{+}\cup \Gamma_-$.  
Taking the following instance as an example,
\begin{eqnarray} \label{point-Z}
\Z=\left[\begin{array}{rrrr}
2&2&0&-1\\ 
0&-1&-2&-3   
\end{array}\right],\qquad M=2, \qquad N=4.\end{eqnarray}
One can check that $\Gamma_{+}=\{1,2\}, \Gamma_0=\{3\}, \Gamma_-=\{4\}$, and thus $\|\Z\|^+_0=|\Gamma_{+}|=2$. If $s\geq 2$, then $\Gamma_s=\{1,2\}$ and  $\T(\Z;s)=\{\{3\}\}$. If $s=1$, then $\Gamma_s=\{1\}$ or $ \{2\}$  and  $\T({\Z};1)=\{\{1,3\},\{2,3\}\}$.

\subsection{Projection}
 Projection $\bpi_{\rmo}({\Z})$ of ${\Z}$ onto a nonempty and closed set $\rmo$ is defined by
\begin{eqnarray}
\bpi_\rmo({\Z}) = {\rm argmin}_{\W\in{\rmo}} ~   \|{\Z}-\W\|.
\end{eqnarray}
It is well known that the solution set of the right-hand side problem is a singleton when ${\rmo}$ is convex and might have multiple elements otherwise.  The following property shows that the projection onto $\S$ has a closed form.
\begin{proposition}\label{pro-1} Define $\T({\Z};s)$ as (\ref{T-z}). Then
\begin{eqnarray}\label{psz}
\bpi_\S({\Z})=\Big\{\left[ ({\Z}_{:T})^-~~{\Z}_{: \overline T  } \right]: T\in\T({\Z};s) \Big\}.
\end{eqnarray}
\end{proposition}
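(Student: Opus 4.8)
The plan is to compute the projection by decoupling the optimization $\min_{\W\in\S}\|\Z-\W\|^2$ into column-wise contributions and then choosing which $s$ (at most) columns of $\W$ are allowed to have a positive maximum. First I would write the squared objective as $\|\Z-\W\|^2=\sum_{n\in\N}\|\Z_{:n}-\W_{:n}\|^2$, so that the only coupling between columns comes through the constraint $\|\W\|^+_0\le s$, i.e.\ at most $s$ of the columns $\W_{:n}$ may satisfy $\W^{\max}_{:n}>0$. Thus any minimizer is described by a choice of a set $S\subseteq\N$ with $|S|\le s$ of ``free'' columns (those permitted to be positive) and a set $\overline S=\N\setminus S$ of ``constrained'' columns (which must have nonpositive maximum, i.e.\ $\W_{:n}\le 0$ entrywise).

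The next step is to solve the two resulting per-column subproblems. For $n\in S$ there is no constraint, so the optimal choice is $\W_{:n}=\Z_{:n}$, contributing $0$ to the objective. For $n\in\overline S$ we must minimize $\|\Z_{:n}-\W_{:n}\|^2$ over $\W_{:n}\le 0$; since the feasible set is the nonpositive orthant, the minimizer is the entrywise projection $\W_{:n}=(\Z_{:n})^-=(\min\{Z_{mn},0\})_m$, contributing $\|(\Z_{:n})^+\|^2$ to the objective. Hence the total objective for a given admissible set $S$ equals $\sum_{n\in\overline S}\|(\Z_{:n})^+\|^2$, and minimizing over $S$ amounts to choosing $S$ to \emph{capture} the columns with the largest values $\|(\Z_{:n})^+\|$. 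Since $\|(\Z_{:n})^+\|=0$ for $n\in\Gamma_0\cup\Gamma_-$, only columns in $\Gamma_+$ matter; if $|\Gamma_+|\le s$ one takes $S=\Gamma_+$ (no residual), and otherwise one takes $S=\Gamma_s$ to be any set of $r=\min\{s,|\Gamma_+|\}$ indices realizing the $r$ largest values $\|(\Z_{:n})^+\|$, exactly matching the definition of $\T(\Z;s)$ in \eqref{T-z} via $\overline S=\overline T=\Gamma_s\cup\Gamma_-$.

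Finally I would assemble these pieces: for each $T\in\T(\Z;s)$ the matrix $\W$ with $\W_{:T}=\Z_{:T}$ and $\W_{:\overline T}=(\Z_{:\overline T})^-$ attains the common minimal value $\sum_{n\notin\Gamma_s}\|(\Z_{:n})^+\|^2=\sum_{r<j\le|\Gamma_+|}(\Z^{\downarrow}_j)^2$ (interpreted as $0$ when $|\Gamma_+|\le s$), and no admissible $S$ can do better; conversely any minimizer must, column by column, take one of these forms, which gives the set equality \eqref{psz}. One small point to verify carefully is feasibility: a column $\Z_{:n}$ with $n\in\Gamma_+$ that is placed in $S$ does have $\W^{\max}_{:n}=\Z^{\max}_{:n}>0$, so it is counted by $\|\W\|^+_0$, and since $|S\cap\Gamma_+|=r\le s$ we indeed have $\|\W\|^+_0\le s$; also columns in $\Gamma_0$ forced into $\overline T$ stay at $\W_{:n}=\Z_{:n}$ since $(\Z_{:n})^-=\Z_{:n}$ when $\Z^{\max}_{:n}=0$ only if all entries are $\le 0$ — here one must note $(\Z_{:n})^-$ need not equal $\Z_{:n}$ for $n\in\Gamma_0$ in general, but it still contributes $0$ and keeps $\W^{\max}_{:n}\le 0$, so feasibility and optimality are unaffected. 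The main obstacle, such as it is, is bookkeeping: making the decoupling rigorous and being careful that the non-uniqueness of $\Gamma_s$ (ties in $\|(\Z_{:n})^+\|$) is exactly captured by letting $T$ range over all of $\T(\Z;s)$, rather than any genuinely hard analytic step.
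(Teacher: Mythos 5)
Your proof is correct, and it is exactly the straightforward column-wise argument the paper had in mind when it omitted the proof of Proposition \ref{pro-1}: decouple $\|\Z-\W\|^2$ over columns, note that a constrained column contributes the residual $\|(\Z_{:n})^+\|^2$ via projection onto the nonpositive orthant, and let the budget of $r=\min\{s,|\Gamma_+|\}$ free columns absorb the largest such values, with ties accounted for by letting $T$ range over $\T(\Z;s)$. One small correction to your closing remark: by definition \eqref{T-z} the indices of $\Gamma_0$ always lie in $T$ (not $\overline T$), and for $n\in\Gamma_0$ one has $(\Z_{:n})^-=\Z_{:n}$ exactly, since $\mz_{:n}=0$ forces every entry of $\Z_{:n}$ to be nonpositive; so the hedge that $(\Z_{:n})^-$ ``need not equal $\Z_{:n}$'' is unnecessary, and your feasibility and optimality conclusions are unaffected.
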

\proof{Proof} 
Denote $\Phi$ the set of the right-hand side of \eqref{psz}. If $\Z\in\S$, then $\bpi_\S({\Z})=\{\Z\}$ and $T=\Gamma_0$ for any $T\in\T({\Z};s)$ which implies $({\Z}_{:T})^-={\Z}_{:T}$, leading to $\Phi=\{\Z\}$. As a result, $\bpi_\S({\Z})=\Phi$.  If $\Z\notin\S$, then $|\Gamma_+|>s$. One can observe that $\N=\Gamma_+\cup\Gamma_0\cup\Gamma_{-}$.  By the definitions in \eqref{notation-z} of $\Gamma_0$ and $\Gamma_{-}$, for any  $\Z^*\in\bpi_\S({\Z}) = {\rm argmin}_{\|\W\|_0^+\leq s } ~  \|{\Z}-\W\|$, we must have $Z^*_{:n}=Z_{:n}$ for any $n\in\Gamma_0\cup\Gamma_{-}$ because $\|\Z\|_0^+$ only counts the number of positive values in $\{\Z_{:1}^{\max},\ldots,\Z_{:N}^{\max}\}$. Therefore, to preserve the feasibility while minimizing $ \|{\Z}-\W\|$, $\Z^*$ should keep $s$ columns of ${\Z}$ but set the remaining to be non-positive. In other words, we need to pick an index set $\Gamma\subseteq\Gamma_+$ with $|\Gamma|=s$ such that
\begin{eqnarray*} 
 \Z_{:n}^*= \left\{
\eqspace{1.35}\begin{array}{ll}
 \Z_{:n}  ,&n\in\Gamma ,\\ 
 (\Z_{:n})^-,&n\in\Gamma_+\setminus\Gamma,\\
  \Z_{:n},&n\in\Gamma_0\cup\Gamma_{-}.
\end{array}
\right.
\end{eqnarray*}
By the definition of $\Gamma_s$ in \eqref{T-z}, the best choice of $\Gamma$ is  $\Gamma_{s}$. Then it follows from $({\Z}_{:\Gamma_0})^-={\Z}_{:\Gamma_0}$ that
\begin{eqnarray*} 
 \Z_{:n}^*= \left\{
\eqspace{1.35}\begin{array}{ll}
 (\Z_{:n})^-,&n\in(\Gamma_+\setminus\Gamma_s)\cup\Gamma_0,\\
  \Z_{:n},&n\in\Gamma_s\cup\Gamma_{-},
\end{array}
\right.
\end{eqnarray*}
and thus $\Z^*\in\Phi$, showing $\bpi_\S({\Z})\subseteq\Phi$. The above arguments can also show that $\bpi_\S({\Z})\supseteq\Phi$, which finishes the proof.\Halmos
\endproof
 We provide an example to illustrate \eqref{psz}. Again consider an example in \eqref{point-Z}. For $s \geq2$, ${\bpi_\S}({\Z})=\{{\Z}\}$ due to $\T({\Z};s)=\{\{3\}\}$ and $({\Z}_{:3})^-={\Z}_{:3}$. For $s=1$,  $\T({\Z};1)=\{\{1,3\},\{2,3\}\}$ and thus 
\begin{eqnarray*}{\bpi_\S}({\Z})=\left\{\left[\begin{array}{rrrr}
0&2&0&-1\\ 
0&-1&-2&-3  
\end{array}\right],\left[\begin{array}{rrrr}
2&0&0&-1\\ 
0&-1&-2&-3  
\end{array}\right]\right\}.\end{eqnarray*}
With the help of the closed form of   ${\bpi_\S}({\Z})$, we establish the following fixed point inclusion.
\begin{proposition}\label{pro-eta} Given  $\W \in\R^{M\times N}$ and $\tau >0$, a point ${\Z}\in\R^{M\times N}$  satisfies
\begin{eqnarray}\label{uPu}
{\Z} \in {\bpi_\S}({{\Z}}+\tau \W )
\end{eqnarray}
if and only if it satisfies  
\begin{eqnarray}\label{uPu-equ}
\left\{\begin{array}{ll}
\W ={\bf0},&\text{if}~\|{\Z}\|^+_0 < s,\\[1ex]
\W _{:\overline \Gamma_0}={\bf0} ,~~ {\bf0}\geq {\Z} _{:\Gamma_0} \perp \W _{:\Gamma_0} \geq {\bf0},~~  \tau\|\W _{:n}\|\leq  \zs , ~\forall n\in {\Gamma_0},&\text{if}~\|{\Z}\|^+_0 = s.
\end{array}\right.
\end{eqnarray} 
 Moreover, if a point ${\Z}$ satisfies (\ref{uPu}), then $ \T({{\Z} }+\tau \W ; s)\ni\{\Gamma_0\}$. 
\end{proposition}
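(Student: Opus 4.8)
\noindent\emph{Proof plan.}
The statement is driven entirely by the closed form of $\bpi_\S$ in \cref{pro-1}. Throughout, let $\Gamma_+,\Gamma_0,\Gamma_-$ be the index sets \eqref{notation-z} associated with $\Z$, so that $\overline{\Gamma_0}=\Gamma_+\cup\Gamma_-$ and $\|\Z\|^+_0=|\Gamma_+|$ by \eqref{0+norm}, and abbreviate ${\bf U}:=\Z+\tau\W$. By \cref{pro-1}, the inclusion \eqref{uPu} is equivalent to the existence of some $T\in\T({\bf U};s)$ with $\Z_{:T}=({\bf U}_{:T})^-$ and $\Z_{:\overline T}={\bf U}_{:\overline T}$. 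The plan is to translate this pair of matrix identities into the componentwise description \eqref{uPu-equ}, and conversely.

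\emph{From \eqref{uPu} to \eqref{uPu-equ}.} Fix such a $T$. The identity on $\overline T$ gives $\W_{:\overline T}=0$ immediately. For $n\in T$ and any $m$, the scalar equation $Z_{mn}=\min\{Z_{mn}+\tau W_{mn},0\}$ splits on the sign of $Z_{mn}+\tau W_{mn}$ and in both cases yields $Z_{mn}\le0$, $W_{mn}\ge0$, $Z_{mn}W_{mn}=0$; hence $0\ge\Z_{:T}\perp\W_{:T}\ge0$ and, in particular, $\mz_{:n}\le0$ for all $n\in T$. The crucial step is to pin down $T$: combining this sign information with the fact that ${\bf U}_{:n}=\Z_{:n}$ for $n\in\overline T$ and with the structure $\overline T=\Gamma_{s}({\bf U})\cup\Gamma_-({\bf U})$ from \eqref{z0+-Gamma+}, a short matching of the three partitions shows $T=\Gamma_0$, $\Gamma_+=\Gamma_{s}({\bf U})$, and therefore $\|\Z\|^+_0=|\Gamma_{s}({\bf U})|=\min\{s,|\Gamma_+({\bf U})|\}$. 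If $\|\Z\|^+_0<s$, this last equality forces $|\Gamma_+({\bf U})|=\|\Z\|^+_0<s$, so ${\bf U}\in\S$, $\bpi_\S({\bf U})=\{{\bf U}\}$, and hence $\W=0$: the first line of \eqref{uPu-equ}. If $\|\Z\|^+_0=s$, then $\W_{:\overline{\Gamma_0}}=0$ and $0\ge\Z_{:\Gamma_0}\perp\W_{:\Gamma_0}\ge0$ are already in hand; moreover the complementarity gives $\|({\bf U}_{:n})^+\|=\tau\|\W_{:n}\|$ for $n\in\Gamma_0$, while $\Gamma_+=\Gamma_{s}({\bf U})$ collects the $s$ indices with largest $\|({\bf U}_{:n})^+\|$, whose smallest such value is $\|(\Z_{:i_s})^+\|=\zs$; since each $n\in\Gamma_0$ lies outside this selection, $\tau\|\W_{:n}\|=\|({\bf U}_{:n})^+\|\le\zs$, which is the last condition in \eqref{uPu-equ}.

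\emph{From \eqref{uPu-equ} to \eqref{uPu}.} If $\|\Z\|^+_0<s$, then $\W=0$, so ${\bf U}=\Z\in\S$ and $\bpi_\S(\Z)=\{\Z\}$ by \cref{pro-1}, giving \eqref{uPu} trivially. If $\|\Z\|^+_0=s$, the hypotheses say ${\bf U}$ coincides with $\Z$ off $\Gamma_0$, while on $\Gamma_0$ the complementarity yields $({\bf U}_{:n})^-=\Z_{:n}$ and $\|({\bf U}_{:n})^+\|=\tau\|\W_{:n}\|\le\zs$. I would then compute the partition of ${\bf U}$, namely $\Gamma_+({\bf U})=\Gamma_+\cup P$, $\Gamma_0({\bf U})=\Gamma_0\setminus P$, $\Gamma_-({\bf U})=\Gamma_-$ with $P:=\{n\in\Gamma_0:\W_{:n}\ne0\}$, and note that the $s$ columns of $\Gamma_+$ all satisfy $\|({\bf U}_{:n})^+\|\ge\zs$ whereas every other column satisfies $\|({\bf U}_{:n})^+\|\le\zs$; hence $\Gamma_+$ is a legitimate choice of $\Gamma_{s}$ in \eqref{T-z} for ${\bf U}$, so $T:=(\Gamma_+({\bf U})\setminus\Gamma_+)\cup\Gamma_0({\bf U})=\Gamma_0$ lies in $\T({\bf U};s)$, and the computations above give $\left[({\bf U}_{:\Gamma_0})^-~~{\bf U}_{:\overline{\Gamma_0}}\right]=\Z$, i.e.\ \eqref{uPu} by \cref{pro-1}. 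For the ``moreover'', the matching argument in the forward part shows $\Gamma_0$ is the only $T$ that can realise $\Z$ through \cref{pro-1}, and the partition computation just made confirms $\Gamma_0\in\T({\bf U};s)$ with $\Gamma_+$ the (generically unique) top-$s$ selection; together these give $\T({\bf U};s)=\{\Gamma_0\}$.

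\emph{Main obstacle.} I expect the real work to be the bookkeeping in the middle of the forward part: tracking how the partition $\Gamma_+/\Gamma_0/\Gamma_-$ and the ordered values $\{\|({\bf U}_{:n})^+\|:n\in\N\}$ transform on passing from $\Z$ to ${\bf U}=\Z+\tau\W$, and matching this against the ``first $r$ largest'' selection encoded in \eqref{T-z} (in particular the boundary case where some $\tau\|\W_{:n}\|$ equals $\zs$). By contrast, the scalar complementarity manipulations and the appeals to \cref{pro-1} are routine.
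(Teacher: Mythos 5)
Your proof is correct and takes essentially the same route as the paper's: the closed form of $\bpi_\S$ from Proposition \ref{pro-1}, the scalar complementarity equivalence $0\geq\Z_{:n}\perp\W_{:n}\geq0\Leftrightarrow(\Z_{:n}+\tau\W_{:n})^-=\Z_{:n}$, and the matching of the partitions of $\Z$ and $\Z+\tau\W$ (giving $T=\Gamma_0$ and $\Gamma_+=\Gamma_s(\Z+\tau\W)$ in the forward direction, and $\Gamma_0\in\T(\Z+\tau\W;s)$ via the threshold computation in the backward direction) are exactly the ingredients of the paper's argument. The only soft spot is the final deduction $\T(\Z+\tau\W;s)=\{\Gamma_0\}$, which you infer from ``$\Gamma_0$ is the only $T$ realising $\Z$'' plus $\Gamma_0\in\T(\Z+\tau\W;s)$ — a step that by itself does not exclude additional tie-generated elements of $\T(\Z+\tau\W;s)$ when $\tau\|\W_{:n}\|=\zs$ for some $n\in\Gamma_0$ with $\W_{:n}\neq0$ — but this is precisely the same leap the paper makes when it treats its realizing $T$ as ``arbitrarily chosen'', so your attempt is faithful to (and no weaker than) the published proof.
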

\proof{Proof}
It is easy to verify the following fact: for any $n\in{\N}$,
\begin{eqnarray}\label{eq-1}
\eqspace{1.35}
\begin{array}{rrl}
{\bf0}~\geq~{\Z}_{:n}~\bot~\W _{:n}~\geq~ {\bf0}
&\Longleftrightarrow& ~({\Z}_{:n}+\tau \W _{:n})^-~~={\Z}_{:n}\\ 
&\Longrightarrow& \|({\Z}_{:n}+\tau \W_{:n})^+\|=\tau\|\W _{:n}\|.
\end{array}\end{eqnarray}
\underline{Sufficiency.}  Relation \eqref{uPu} is clearly true when $\|{\Z} \|^+_0 < s$ as $\W ={\bf0}$. When $\|{\Z} \|^+_0 = s$, we have
\begin{eqnarray} \label{y-tau-lam-0}
\eqspace{1.35}
\begin{array}{lll}
&& \left[
{\Z} _{:\Gamma_0}+\tau \W_{:\Gamma_0},~
{\Z} _{:\Gamma_+}+\tau \W_{:\Gamma_+},~
{\Z} _{:\Gamma_-}+\tau \W_{:\Gamma_-} 
\right] \\
& =&   {{\Z} }+\tau \W 
 \overset{\eqref{uPu-equ}}{=} \left[
{\Z} _{:\Gamma_0}+\tau \W_{:\Gamma_0},~
{\Z} _{:\Gamma_+},~
{\Z} _{:\Gamma_-}
\right],\end{array}
\end{eqnarray}
where $\Gamma_0,\Gamma_+,$ and $\Gamma_-$ are defined for ${\Z}$ as \eqref{notation-z}. Therefore,
\begin{eqnarray} \label{y-tau-lam}
({\Z}_{:n}+\tau \W_{:n})^{\max}={\Z}_{:n}^{\max}\left\{
\eqspace{1.35}\begin{array}{ll}
>0,&n\in\Gamma_+,\\ 
< 0,&n\in\Gamma_-.
\end{array}
\right.
\end{eqnarray}
Moreover,   we have
\begin{eqnarray} \label{y-tau-lam-1}
\forall~ n\in {\Gamma_0},~~\|({\Z} _{:n}+\tau \W_{:n})^+\| \overset{(\ref{uPu-equ},\ref{eq-1})}{=}  \tau\| \W _{:n} \| \overset{\eqref{uPu-equ}}{\leq} \zs.
\end{eqnarray}
Since $\|{\Z} \|^+_0 = s$, there is $|\Gamma_+|=s$. This together with \eqref{y-tau-lam-0} and \eqref{y-tau-lam-1} suffices to $\zs = ({\Z} +\tau \W   )^\downarrow_s$, which by   \eqref{y-tau-lam} and \eqref{y-tau-lam-1} implies ${\Gamma_0}\in \T({{\Z} }+\tau \W ; s)$. Then it follows from Proposition  \ref{pro-1} that
\begin{eqnarray*}\eqspace{1.35}
\begin{array}{lcl}
{\bpi_\S}({{\Z} }+\tau \W )&\ni&\left[
({\Z} _{:\Gamma_0}+\tau \W_{:\Gamma_0})^-,~
{\Z} _{ :\overline\Gamma_0}+\tau \W_{ :\overline\Gamma_0}
\right]\\
&\overset{(\ref{uPu-equ},\ref{eq-1})}{=}&\left[
 {\Z} _{:\Gamma_0},~
{\Z} _{:\overline \Gamma_0}
\right]={\Z} .\end{array}\end{eqnarray*}
\underline{Necessity.} It follows from \eqref{uPu} that $\|{\Z} \|^+_0 \leq s$. We claim the conclusion by two cases.
 \begin{itemize}[leftmargin=12pt]
\item {Case 1: $\|{\Z} \|^+_0< s$.}  Condition \eqref{uPu}  implies $\|{\Z} +\tau \W  \|^+_0 < s$,  leading to
\begin{eqnarray*}
{\Z}\in {\bpi_\S}({\Z} +\tau \W  ) = \{{\Z} +\tau \W  \},
\end{eqnarray*}
deriving $\W ={\bf0}$ and  hence $\T({{\Z} }+\tau \W ; s)=\T({{\Z} }; s)=\{\Gamma_0\}$ due to $\|{\Z} \|^+_0< s$. 
\item  {Case 2: $\|{\Z} \|^+_0 =s$.}
Let $\Gamma'_+, \Gamma'_-$,  and $ \Gamma'_0$ be defined as \eqref{notation-z}, and  $\Gamma'_{s}$ be defined as in \eqref{T-z}  but for ${\Z} +\tau \W  $. For any  $T \in\T({\Z} +\tau \W  ;s)$, we have
\beq\label{overline-T}\overline T  \overset{\eqref{z0+-Gamma+}}{=}{\Gamma'_{s} \cup  \Gamma'_-}.\eeq
 It follows from Proposition  \ref{pro-1} that there is a set $T \in\T({\Z} +\tau \W  ;s)$ such that
\begin{eqnarray}\label{fact-z-z-w}
 \eqspace{1.35}
\begin{array}{lcl}
[ 
{\Z} _{:T  },~
{\Z} _{:\Gamma'_{s}},~
{\Z} _{:\Gamma'_{-}} ]&\overset{\eqref{overline-T}}{=}&\left[ 
{\Z} _{:T  },~
{\Z} _{:\overline  T  } 
\right]={\Z}\\
& \overset{\eqref{psz}}{=}&\left[ 
({\Z} _{:T  }+\tau \W_{:T  })^-,~
{\Z} _{:\overline  T  }+\tau \W_{:\overline  T  } 
\right]\\
&\overset{\eqref{overline-T}}{=}& [ 
({\Z} _{:T  }+\tau \W_{:T  })^-,~
{\Z} _{:\Gamma'_{s}}+\tau \W_{:\Gamma'_{s}},~
{\Z} _{:\Gamma'_{-}}+\tau \W_{:\Gamma'_{-}} 
 ],\end{array} 
\end{eqnarray}
 which by \eqref{eq-1} suffices to
\beq\label{pro-z-T}
\begin{array}{lll}
\W _{:\overline{T }}&=&{\bf0},~ {\bf0} \geq  {\Z} _{:T}  \perp  \W _{:T}  \geq {\bf0}.
\end{array} \eeq
Conditions \eqref{fact-z-z-w} and the definitions of $\Gamma'_{s}$ and $\Gamma'_{-}$ enable us to obtain
\beq\label{zimax}
\mz_{:n}  \left\{
\begin{array}{lll}
\leq 0, ~&n\in  T, \\
>0,~~& n\in \Gamma'_{s}, \\
<0,& n\in \Gamma'_{-}.
\end{array}\right.
\eeq
Recalling $\Gamma_0=\{n\in{\N}: \mz_{:n}=0\} $, it follows 
\beq\label{Gamma0-T}
\Gamma_0\subseteq T\in \T({\Z} +\tau \W  ;s). 
\eeq
We next prove $:\Gamma=T \setminus \Gamma_0   = \emptyset$. In fact, \eqref{zimax} means $\Gamma \subseteq \{n\in{\N}: \mz_{:n}<0\}$, resulting in ${\Z}_{:\Gamma}<{\bf0}$ and thus $\W _{:\Gamma}={\bf0}$ from ${\Z} _{:\Gamma}  \perp  \W _{:\Gamma}$ by \eqref{pro-z-T}. Therefore, $(\Z+\tau \W)^{\max} _{:n}= \mz_{:n} <0 $ for any $n\in \Gamma$. However, $\Gamma\subseteq T\in \T({\Z} +\tau \W  ;s)$ manifests $(\Z+\tau \W)^{\max} _{:n}\geq 0$ for any $n\in \Gamma$ from \eqref{T-z}. This contradiction shows $\Gamma = \emptyset$, which by \eqref{Gamma0-T} delivers  $\Gamma_0= T$, thereby  $\T({\Z} +\tau \W  ;s)\ni\{\Gamma_0\}$.   Overall, 
\begin{eqnarray}\label{mz-w}
\eqspace{1.35}
\mz_{:n}  \left\{
\begin{array}{lll}
= 0, &n\in  \Gamma_0, \\
>0,& n\in \Gamma'_{s}, \\
<0,& n\in \Gamma'_{-}, \\
\end{array}\right. ~~\text{and}~~\begin{array}{lcl} 
\W   &=&[
 \W _{:\Gamma_0},~
\W_{:\overline \Gamma_0} 
 ] \\
 &=&\left[ 
 \W _{:\Gamma_0},~
\W_{:\overline T} 
\right]  \\
& \overset{\eqref{pro-z-T}}{=}&\left[
 \W _{:\Gamma_0},~
{\bf0} 
\right]. \end{array} \end{eqnarray}
These conditions also indicate
 \beq\label{z-s-lam}
  \eqspace{1.35}
 \begin{array}{lclcl}
  \zs&\overset{\eqref{mz-w}}{=}&\min_{n'\in\Gamma'_s} \|({\Z} _{:n'})^+\|\\&\overset{\eqref{fact-z-z-w}}{=}& \min_{n'\in\Gamma'_s} \|({\Z} _{:n'}+\tau \W_{:n'})^+\|\\
 &\geq& \|({\Z} _{:n}+\tau \W_{:n})^+\| \\&{\overset{(\ref{pro-z-T},\ref{eq-1})}{=}}&\tau\|\W _{:n}\|, ~~\forall~n\in  \Gamma_0,
 \end{array}  \eeq
where `$\geq$' is due to the definition of $\{\Gamma_0\} = \T({\Z} +\tau \W  ;s)$  in \eqref{z0+-Gamma+}.   Finally, \eqref{mz-w}, \eqref{pro-z-T}, and \eqref{z-s-lam} enable us to conclude  \eqref{uPu-equ}. \Halmos  
 \end{itemize}
\endproof

\subsection{Tangent and Normal cones}
For a nonempty and closed  set $\rmo\subseteq \mathbb{R}^{K}$, its  Bouligand tangent cone  $\rT_{\rmo}(\bfx)$  and  Fr$\acute{e}$chet normal cone  $\widehat{\rN}_{\rmo}(\bfx)$  at point $\bfx\in\rmo$ are defined as \cite{RW1998}:
\begin{eqnarray}\label{Tangent Cone}
\rT _{\rmo}(\bfx)&:=&  \left\{~\bfd\in\mathbb{R}^{K}:
 \begin{array}{r}
\exists~t_{\ell}\geq0,   \bfx^{\ell}\overset{\rmo}{\rightarrow}\bfx ~\text{such~that}~t_{\ell}(\bfx^{\ell}-\bfx)\rightarrow\bfd
\end{array}\right\},~~~~~~\\[1ex]
\label{Normal Cone} \widehat{\rN}_{\rmo}(\bfx)&:=& \Big\{~\bfu\in\mathbb{R}^{K}:~\langle \bfu, \bfd\rangle\leq0,~\forall~\bfd\in \rT_{\rmo}(\bfx)~\Big\},
\end{eqnarray}
where $\bfx^{\ell} {\rightarrow}\bfx$ represents $\lim_{\ell\rightarrow\infty}\bfx^{\ell}=\bfx$ and $\bfx^{\ell}\overset{\rmo}{\rightarrow}\bfx$ stands for $\bfx^{\ell}\in\rmo$ for {every $\ell$} and $ \bfx^{\ell} \rightarrow\bfx$. Let ${\rmo}_1\cup\ldots\cup\rmo_N$ be the union of finitely many nonempty and closed subsets ${\rmo}_n$.
Then by \cite[Proposition 3.1]{ban2011Lipschitzian}, for any $\bfx\in {\rmo}_1\cup\ldots\cup\rmo_N$ we have 
\begin{eqnarray}\label{cup-tangent}\begin{array}{ccl}
  \rT_{{\rmo}_1\cup\ldots\cup\rmo_N}(\bfx)=\bigcup_{n : \bfx\in {\rmo}_n}\rT_{{\rmo}_n}(\bfx). 
\end{array}\end{eqnarray} 
Note that set $\CF$ can be rewritten as
\begin{eqnarray*}\begin{array}{ccl}
  \CF=\bigcup_{\Gamma\in\P ({\M},s)}
  \Big\{\bfx\in \mathbb{R}^{K}: ~(\G (\bfx))_{:n}^{\max}\leq 0,~ n\in \overline{\Gamma}\Big\},\end{array}
\end{eqnarray*}
 where $\P(\cdot,\cdot)$ is defined as
\begin{eqnarray}\label{index-u}
\begin{array}{ccl}
\P(\Gamma,s):=\{  \Gamma' \subseteq \Gamma:~ |\Gamma'|\leq s\}.
\end{array}
\end{eqnarray}
In the subsequent analysis, we let
$$\Z:=\G(\bfx),$$
and corresponding index sets $\Gamma_+, \Gamma_0, \Gamma_-$, and $V_{\Gamma}$ in \eqref{notation-z} are defined for $\G(\bfx)$. Therefore, we have
\begin{eqnarray}\label{V-Gamma0}
 \eqspace{1.35}
\begin{array}{ccl}
\Gamma_+&=&\left\{n\in{\N} :~(\G (\bfx))_{:n}^{\max}>0\right\},\\
\Gamma_-&=&\left\{n\in{\N} :~(\G (\bfx))_{:n}^{\max}<0\right\},\\
\Gamma_0&=&\left\{n\in{\N} :~(\G (\bfx))_{:n}^{\max}=0\right\},\\
V_{\Gamma}&=&\left\{(m,n)\in\M\times\Gamma:~{G}_{mn}(\bfx) = 0\right\}. 
\end{array}
\end{eqnarray}
Based on set $\P(\Gamma_0, s- |\Gamma_+|)$ and the above notation, we derive the Bouligand tangent cones and corresponding  Fr$\acute{e}$chet normal cone of $\CF$ explicitly by the following theorem.
\begin{proposition}
  \label{tangent-S}
Suppose $\nabla_{V_{\Gamma_0}} \G (\bfx)$ is full column rank. Then Bouligand tangent cone $\rT_{\CF}(\bfx)$  and  Fr$\acute{e}$chet normal cone
$\widehat{\rN}_{\CF}(\bfx)$ at $\bfx\in \CF$ are given by
\begin{eqnarray}
\label{TSG}&\rT_{\CF}(\bfx)~{=}&  
{\bigcup}_{\Gamma \in \P(\Gamma_0,s-|\Gamma_+|)}\left\{\bfd\in\mathbb{R}^{K}:~
 \langle \bfd, \nabla G_{mn} (\bfx) \rangle  \leq { 0 },~ (m,n)\in V_{\Gamma_0\setminus\Gamma } \right\},\\[1ex]
\label{NSG-exp}
&\widehat{\rN}_{\CF}(\bfx)~{=}&
\left\{\eqspace{1.35}
\begin{array}{lll}
\left\{\sum_{V_{\Gamma_0}} W_{mn} \nabla  G_{mn}(\bfx) :~
W_{mn}\geq 0, (m,n)\in V_{\Gamma_0} 
\right\},~& \mathrm{if}~\|\G(\bfx)\|^+_0=s,   \\
\hspace{3cm}\{ {\bf0} \}, &\mathrm{if}~\|\G(\bfx)\|^+_0<s.
\end{array}
\right. ~~~~
\end{eqnarray}
\end{proposition}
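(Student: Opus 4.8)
The plan is to prove the two cones separately, starting from the union representation of $\CF$ already recorded in the excerpt, namely $\CF=\bigcup_{\Gamma\in\P({\M},s)}\CE_\Gamma$ with $\CE_\Gamma:=\{\bfx:(\G(\bfx))_{:n}^{\max}\le 0,\ n\in\overline\Gamma\}$, together with the formula \eqref{cup-tangent} for the tangent cone of a finite union. The key preliminary observation is that at a fixed $\bfx\in\CF$, a set $\CE_\Gamma$ contains $\bfx$ iff $(\G(\bfx))_{:n}^{\max}\le 0$ for all $n\in\overline\Gamma$, i.e. iff $\Gamma_+\subseteq\Gamma$; since $|\Gamma|\le s$, the admissible $\Gamma$ are exactly $\Gamma_+\cup\Gamma'$ with $\Gamma'\in\P(\Gamma_0,s-|\Gamma_+|)$. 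Thus $\rT_{\CF}(\bfx)=\bigcup_{\Gamma'\in\P(\Gamma_0,s-|\Gamma_+|)}\rT_{\CE_{\Gamma_+\cup\Gamma'}}(\bfx)$, and the whole problem reduces to computing the tangent cone of a single smooth inequality system $\CE_{\Gamma_+\cup\Gamma'}$ at $\bfx$.

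For that single piece, first I would rewrite $(\G(\bfx))_{:n}^{\max}\le 0$ for $n\in\overline{\Gamma_+\cup\Gamma'}=\Gamma_-\cup(\Gamma_0\setminus\Gamma')$ as the finitely many scalar inequalities $G_{mn}(\bfx)\le 0$, $m\in\M$. The constraints indexed by $n\in\Gamma_-$ are strictly inactive at $\bfx$ (since $(\G(\bfx))_{:n}^{\max}<0$), so they do not affect the tangent cone; among the constraints with $n\in\Gamma_0\setminus\Gamma'$, the active ones are exactly those $(m,n)$ with $G_{mn}(\bfx)=0$, i.e. the index set $V_{\Gamma_0\setminus\Gamma'}$. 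The full-column-rank hypothesis on $\nabla_{V_{\Gamma_0}}\G(\bfx)$ implies the analogous rank condition for the sub-collection $V_{\Gamma_0\setminus\Gamma'}\subseteq V_{\Gamma_0}$, which is precisely the linear independence constraint qualification (LICQ) for the active constraints of $\CE_{\Gamma_+\cup\Gamma'}$ at $\bfx$. Under LICQ the Bouligand tangent cone of a smooth inequality system equals its linearized cone, so $\rT_{\CE_{\Gamma_+\cup\Gamma'}}(\bfx)=\{\bfd:\langle\bfd,\nabla G_{mn}(\bfx)\rangle\le 0,\ (m,n)\in V_{\Gamma_0\setminus\Gamma'}\}$; taking the union over $\Gamma'\in\P(\Gamma_0,s-|\Gamma_+|)$ yields \eqref{TSG}.

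For the normal cone \eqref{NSG-exp} I would use the defining polarity $\widehat{\rN}_{\CF}(\bfx)=(\rT_{\CF}(\bfx))^\circ$. When $\|\G(\bfx)\|^+_0<s$ we have $|\Gamma_+|<s$, so one may take $\Gamma'\supseteq$ any single element freedom; in particular if additionally $\Gamma_0=\emptyset$ the tangent cone is all of $\R^K$, but more carefully, when $s-|\Gamma_+|\ge 1$ one can always enlarge $\Gamma'$ to swallow the index $n$ carrying any prescribed active constraint, so every $V_{\Gamma_0\setminus\Gamma'}$ appearing in the union can be made empty (take $\Gamma'=\Gamma_0$ if $|\Gamma_0|\le s-|\Gamma_+|$, which holds when $\|\G(\bfx)\|_0^+<s$ is interpreted together with the local picture — this is the point to state precisely), whence $\rT_{\CF}(\bfx)=\R^K$ and the polar is $\{0\}$. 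When $\|\G(\bfx)\|^+_0=s$ we have $|\Gamma_+|=s$, so $\P(\Gamma_0,s-|\Gamma_+|)=\{\emptyset\}$, the union degenerates to the single cone $\{\bfd:\langle\bfd,\nabla G_{mn}(\bfx)\rangle\le 0,\ (m,n)\in V_{\Gamma_0}\}$, and its polar is the finitely generated cone $\{\sum_{V_{\Gamma_0}}W_{mn}\nabla G_{mn}(\bfx):W_{mn}\ge 0\}$ by Farkas' lemma (closedness of the finitely generated cone is automatic, so no extra argument is needed).

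The main obstacle I expect is the single-piece tangent-cone computation: proving that LICQ holds for each $\CE_{\Gamma_+\cup\Gamma'}$ from the single rank hypothesis on $\nabla_{V_{\Gamma_0}}\G(\bfx)$, and being careful that the inclusion "tangent cone $\subseteq$ linearized cone" (always true) is matched by the reverse inclusion (needs the constraint qualification). A secondary subtlety is handling the $\max$ inside the constraint correctly — one must check that near $\bfx$ the inactive rows $G_{mn}$ with $G_{mn}(\bfx)<0$ stay negative and hence the constraint set $\CE_\Gamma$ coincides locally with the system of only the active scalar inequalities, so that removing them does not change the tangent cone. The $\|\G(\bfx)\|^+_0<s$ case also requires a clean statement of why the union of linearized cones collapses to $\R^K$; I would isolate that as a short lemma-style remark before assembling the final formula.
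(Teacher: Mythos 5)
Your treatment of the tangent cone is sound and is essentially the paper's argument: decompose $\CF$ into finitely many smooth pieces, keep only the pieces containing $\bfx$ (which, up to harmless indices from $\Gamma_-$, are parametrized by $\Gamma'\in\P(\Gamma_0,s-|\Gamma_+|)$), note that strictly inactive rows and the columns in $\Gamma_-$ drop out locally, verify LICQ for each piece from the single rank hypothesis (full column rank of $\nabla_{V_{\Gamma_0}}\G(\bfx)$ passes to the subcollection $V_{\Gamma_0\setminus\Gamma'}$), and invoke tangent cone $=$ linearized cone. The case $\|\G(\bfx)\|^+_0=s$ of the normal cone, where the union degenerates to a single polyhedral cone and Farkas gives the polar, is also fine and matches the paper.

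The genuine gap is in the case $\|\G(\bfx)\|^+_0<s$. Having $|\Gamma_+|<s$ does \emph{not} imply $|\Gamma_0|\le s-|\Gamma_+|$, so you cannot in general take $\Gamma'=\Gamma_0$ and the union of linearized cones need not collapse to $\R^{K}$; your hedge ``this is the point to state precisely'' is exactly where the proposed route fails. Concretely, take $K=2$, $M=1$, $N=2$, $s=1$, $\G(\bfx)=(x_1,x_2)$ at $\bfx=0$: then $\Gamma_+=\emptyset$, $\Gamma_0=\{1,2\}$, the rank hypothesis holds, and
\begin{equation*}
\rT_{\CF}(\bfx)=\{\bfd: d_1\le 0\}\cup\{\bfd: d_2\le 0\},
\end{equation*}
which is not $\R^{2}$, yet $\widehat{\rN}_{\CF}(\bfx)=\{0\}$ still holds. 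So in the subcase $|\Gamma_0|>s-|\Gamma_+|$ the conclusion is true but needs a different argument: since the polar of a union is the intersection of the polars, one intersects the polars of the pieces corresponding to the singletons $\Gamma_\ell=\{t_\ell\}\subseteq\Gamma_0$ (admissible because $s-|\Gamma_+|\ge 1$), writes any $\bfu$ in the intersection as $\sum W^{\ell}_{mn}\nabla G_{mn}(\bfx)$ with $W^{\ell}_{mn}\ge 0$ on $V_{\Gamma_0\setminus\Gamma_\ell}$ and $W^{\ell}_{mn}=0$ off it, and then uses the full column rank of $\nabla_{V_{\Gamma_0}}\G(\bfx)$ to force all these multiplier families to coincide and hence vanish, giving $\bfu=0$. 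This is precisely the paper's Case 2 argument, and it is the only place in the normal-cone computation where the rank hypothesis is truly needed; your proposal as written omits it. (The subcase $|\Gamma_0|\le s-|\Gamma_+|$, where $\Gamma'=\Gamma_0$ is admissible and $\rT_{\CF}(\bfx)=\R^{K}$, is handled exactly as you say.)
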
 
\proof{Proof}   For any fixed $\bfx\in \CF$, it follows from \eqref{V-Gamma0} that
\begin{eqnarray*}\label{SGGamma}
\bfx\in  \CF_{\Gamma}:=\left\{{\bf z}\in\mathbb{R}^{K}:
  (\G({\bf z}))_{:n}^{\max}\left\{\begin{array}{ll}
  >0,&n\in \Gamma_+,\\
  <0,& n\in \Gamma_-,\\
  \leq 0,&    n\in\Gamma_0\setminus\Gamma
  \end{array}
  \right.
\right\}, ~~\forall~ \Gamma\in\P:=\P(\Gamma_0,s-|\Gamma_+|).
\end{eqnarray*}
Since $\left(\bigcup_{\Gamma \in \P} \CF_{\Gamma}\right)\subseteq \CF$, the above condition and \eqref{cup-tangent} yield 
 \begin{eqnarray}\label{x-SGGamma}
\rT_{ \CF }(\bfx) = \rT_{\left(\bigcup_{\Gamma \in \P} \CF_{\Gamma}\right)}(\bfx).
\end{eqnarray}
We note that the active index set of $\CF_{\Gamma}$ at $\bfx$ is $$  V_{\Gamma_0\setminus\Gamma}=\{(m,n)\in{\M} \times(\Gamma_0\setminus\Gamma): G_{mn}(\bfx)=0\} \subseteq V_{\Gamma_0}.$$
 Since $\nabla_{V_{\Gamma_0}} \G(\bfx)$  is full column rank, so is $\nabla_{V_{\Gamma_0\setminus\Gamma}} \G(\bfx)$.  Then by \cite[Lemma 12.2]{nocedal2006quadratic}, for any $\Gamma \in \P $,
the Bouligand tangent cone of $\CF_{\Gamma}$ at $\bfx$ is
\begin{eqnarray} \label{TSG-Gamma}
  \rT_{\CF_{\Gamma}}(\bfx)=\left\{\bfd\in\mathbb{R}^{K}:~\langle \bfd, \nabla G_{mn} (\bfx) \rangle  \leq { 0 }, ~(m,n)\in V_{\Gamma_0\setminus\Gamma } \right\}.
\end{eqnarray}
This together with \eqref{x-SGGamma} and \eqref{cup-tangent} shows \eqref{TSG}. 

 Next, we calculate Fr$\acute{e}$chet normal cone $\widehat{\rN}_{\CF}(\bfx)$ of $\bfx\in \CF$  by two cases.
 \begin{itemize}[leftmargin=12pt]
\item  {Case 1: $\|\G(\bfx)\|_0^+=s$.} By \eqref{0+norm}, we have $|\Gamma_+|=s$ and thus  $\P =\P(\Gamma_0,s-|\Gamma_+|)=\{\emptyset\}$, yielding 
\begin{eqnarray*}
 \rT_{\CF}(\bfx) \overset{\eqref{TSG}}{=}  \left\{\bfd\in\mathbb{R}^{K}:~ \langle \bfd, \nabla G_{mn} (\bfx) \rangle  \leq { 0 }, (m,n)\in V_{\Gamma_0} \right\}.
\end{eqnarray*}
Then direct verification by definition \eqref{Normal Cone} allows us to derive \eqref{NSG-exp}.\\
\item  {Case 2: $\|\G(\bfx)\|_0^+<s$.}
If $|\Gamma_0|\leq s-|\Gamma_+|$, then $\Gamma_0\in \P $.
From \eqref{TSG}, we have $\rT_{\CF}(\bfx)=\R^{K}$ and then $\widehat{\rN}_{\CF}(\bfx)=\{{\bf0}\}.$ If  $|\Gamma_0|> s-|\Gamma_+|$, then $|\Gamma_0|\geq 2$ due to $|\Gamma_+|=\|\G(\bfx)\|_0^+<s$. Let 
\begin{eqnarray}\label{def-Gamma-ell}
\Gamma_0:=\{t_1,t_2,\ldots,t_{|\Gamma_0|}\},\qquad\Gamma_{\ell}:= \{t_{\ell}\}, ~\ell=1,2,\ldots,|\Gamma_0|.
\end{eqnarray} 
In addition,  it follows from \eqref{TSG-Gamma}  and \eqref{TSG} that $\rT_{\CF}(\bfx)=\bigcup_{\Gamma\in \P}\rT_{\CF_{\Gamma}}(\bfx)$, thereby  $\rT_{\CF_{\Gamma_{\ell}}}(\bfx)\subseteq \rT_{\CF}(\bfx)$ due to $\Gamma_{\ell}\in\P.$ We note from \eqref{TSG-Gamma} that for any $\bfd\in \rT_{\CF_{\Gamma}}(\bfx)$, any vector $\bfu$ satisfying $\langle \bfu, \bfd\rangle \leq 0$ takes the form of
\begin{eqnarray}\label{u-express}
\begin{array}{lll}
\bfu= \sum W_{mn}  \nabla  G_{mn}(\bfx),\qquad  W_{mn} \left\{ \begin{array}{lll}
\geq 0,~&(m,n)\in V_{\Gamma_0\setminus\Gamma},\\[1ex]
= 0, ~&(m,n)\notin V_{\Gamma_0\setminus\Gamma}.
\end{array}\right.\end{array}
\end{eqnarray}
To show \eqref{NSG-exp}, it suffices to show $\bfu={\bf0}$. By considering $\bfd^{\ell}\in \rT_{\CF_{\Gamma_{\ell}}}(\bfx), \ell=1,\ldots,|\Gamma_0|$ and any $\bfu\in \widehat{\rN}_{\CF}(\bfx)$, we have $\bfd^{\ell}\in \rT_{\CF}(\bfx)$ and $\langle \bfu, \bfd^{\ell}\rangle \leq 0$ for each $\ell$. This and \eqref{u-express} suffice to
\begin{eqnarray} \label{u-Gamma1-Gammak}
\begin{array}{lll}
&&\bfu =  \sum W^\ell_{mn}  \nabla  G_{mn}(\bfx), \qquad  W^\ell_{mn} \left\{ \begin{array}{lll}
\geq 0,~&(m,n)\in V_{\Gamma_0\setminus\Gamma_{\ell}}=:D_\ell,\\[1ex]
= 0, ~&(m,n)\notin V_{\Gamma_0\setminus\Gamma_{\ell}}.
\end{array}\right.
\end{array}
\end{eqnarray}
for all $ \ell=1,2,\ldots, |\Gamma_0|,$ which results in
\begin{eqnarray*}
\eqspace{1.35}
\begin{array}{lll}
0&=&\sum W^1_{mn}  \nabla  G_{mn}(\bfx) -  \sum W^\ell_{mn}  \nabla  G_{mn}(\bfx)\\
&=& \sum_{D_1\cup D_\ell} W^1_{mn}  \nabla  G_{mn}(\bfx) - \sum_{D_1\cup D_\ell} W^\ell_{mn}  \nabla  G_{mn}(\bfx)\\
&=&  \sum_{ D_1 \cup D_\ell} (W^1_{mn} -     W^\ell_{mn})  \nabla  G_{mn}(\bfx) , ~~ \ell=2,3,\ldots, |\Gamma_0|.
\end{array}
\end{eqnarray*}
Since $ (D_1 \cup D_\ell)= V_{\Gamma_0}$ from \eqref{def-Gamma-ell}, the
 above condition and the full column rankness of $\nabla_{V_{\Gamma_0}} \G(\bfx)$ enable us to derive that $$W^1_{mn}= W^\ell_{mn}, ~(m,n)\in V_{\Gamma_0}$$
However, for each $\ell$, it has $W^\ell_{m n}=0$ for any $(m,n)\in V_{\Gamma_\ell}$ from \eqref{u-Gamma1-Gammak}, thereby $W^1_{mn}= 0$ for any $(m,n)\in V_{\Gamma_\ell}$. Overall,  $W^1_{mn}= 0$ for any $(m,n)\in V_{\Gamma_0},$ which by \eqref{u-Gamma1-Gammak} proves $\bfu={\bf0}$. \Halmos
 \end{itemize}
\endproof
\begin{remark} The Bouligand tangent  and  Fr$\acute{e}$chet normal cones of $\S$ at ${\Z}\in \S$ can be given as
\begin{eqnarray}
\label{TS-s}\rT_{\S}({\Z})&=&  {\bigcup}_{\Gamma \in \P(\Gamma_0,s-|\Gamma_+|)} \Big\{\W\in\mathbb{R}^{M\times N}:~W_{mn} \leq { 0 }, (m,n)\in V_{\Gamma_0\setminus\Gamma} 
  \Big\}, \\[1ex]
\label{NS-exp}
\widehat{\rN}_{\S}({\Z})&=&\left\{\eqspace{1.35}
\begin{array}{cl}
\left\{\W\in\mathbb{R}^{M\times N}:~ \begin{array}{l}
W_{mn} \geq { 0 }, (m,n)\in V_{\Gamma_0} \\
W_{mn} = { 0 }, (m,n)\notin V_{\Gamma_0} \\
\end{array}  
\right\}, ~~&\|{\Z}\|^+_0=s,~\\
 \{ {\bf0} \},&\|{\Z}\|^+_0<s.
\end{array}
\right.
\end{eqnarray}
 It should be noted that if $\nabla_{V_{\Gamma_0}} \G (\bfx)$ is full column rank, together with \eqref{NSG-exp} and \eqref{NS-exp},
the Fr$\acute{e}$chet normal cone of $\CF$ at $\bfx\in \CF\underline{}$ can be written as
\begin{eqnarray}\label{SG-Normal Cone}
\begin{array}{l}
 \widehat{\rN}_{\CF}(\bfx)=\left\{ \sum W_{mn}  \nabla  G_{mn}(\bfx) :~\W \in \widehat{\rN}_{\S}({\Z}),~
{\Z}=\G(\bfx)\right\}. 
\end{array}\end{eqnarray}
\end{remark}
Finally, we calculate the tangent and normal cones of $\S=\{{\Z}\in\R^{2\times 4}:~\|{\Z}\|^+_0\leq 2\}$ at
 \begin{eqnarray} \label{point-Z1}
{\Z}=\left[\begin{array}{rrrr}
2 & 2& 0& -1\\
0&-1&-2&-3 
\end{array}\right],~{\Z}'=\left[\begin{array}{rrrr}
2 & 0& 0& -1\\
0&-1&-2&-3 
\end{array}\right].\end{eqnarray}
It can be checked that
\begin{eqnarray*}\eqspace{1.35}
\begin{array}{llllll}
\rT_{S}({\Z})&=& \{\W\in\mathbb{R}^{2\times 4}:W_{13} \leq 0 \},\\
\widehat{\rN}_{S}({\Z}) &=&  \left\{\W \in\mathbb{R}^{2\times 4}:
W_{13}\geq 0,~W_{mn}=0, \forall (m,n)\neq (1,3)
 \right\},\\
 \rT_{S}({\Z}')&=& \{\W\in\mathbb{R}^{2\times 4}:W_{12} \leq 0 ~\text{or}~W_{13} \leq 0 \} ,\\
\widehat{\rN}_{S}({\Z}')&=&\left\{  {\bf0}  \right\}.
\end{array}
\end{eqnarray*}
\section{Optimality Analysis} \label{sec:opt} 
 In this section, we aim at establishing the first order necessary or sufficient optimality conditions of (\ref{SCP}). Hereafter, we always assume its feasible set  is non-empty if no additional information is provided and let 
   \begin{eqnarray}\label{z*-Gx*}
 \begin{array}{rlc}
{\Z}^*:=\G(\bfx^*). 
 \end{array} 
  \end{eqnarray}
We point out that ${\Z}^*$ depends on $\bfx^*$ and we drop this dependence for notational simplicity. Similar to \eqref{V-Gamma0}, we always denote $\Gamma_+^*, \Gamma_0^*, \Gamma_-^*$   for ${\Z}^*$ as well as 
\beq\label{gamma*}   \J_*:=V_{\Gamma^*_0}=\left\{(m,n)\in\M\times\Gamma^*_0:~Z_{mn}^* = 0\right\}.\eeq 
 
\subsection{KKT points}
We call  $\bfx^*\in\R^{K}$ a KKT point of   (\ref{SCP})  if there is a $\W ^*\in\R^{M\times N}$ such that 
  \begin{eqnarray}\label{KKT-point}
 \left\{\begin{array}{rlc}
  -\nabla f(\bfx^*)- \sum W^*_{mn}  \nabla G_{mn}(\bfx^*)  &\in&\widehat{\rN}_{\Omega}(\bfx^*),\\[1ex]
\W ^*&\in& \widehat{\rN}_\S(\Z ^*),\\[1ex]
\bfx^*&\in& \CF\cap\Omega.
 \end{array}\right. 
  \end{eqnarray} 
To derive the relationship between a KKT point and a local minimizer of   (\ref{SCP}), we need the following assumptions. 
\begin{assumption}\label{ass-cq}For point $\bfx^*\in{ \Omega}\cap \CF$,  suppose that $\nabla_{\J_*} \G (\bfx^*)$  is full column rank and 
\begin{equation} \label{CQ}
\widehat{\rN}_{\cal F\cap\Omega}(\bfx^*)=\widehat{\rN}_{\cal F}(\bfx^*)+\widehat{\rN}_{\Omega}(\bfx^*).
\end{equation}
\end{assumption}
One can note that condition \eqref{CQ} is a constraint qualification. 
This condition can be removed if $\Omega=\R^{K}$ or $\bfx^*$ lies in the interior of $\Omega$ due to $\widehat\rN_{\Omega}(\bfx^*)=\{\bf0\}$.
\begin{theorem}[KKT points and local minimizers]\label{nec-suff-opt-con-KKT}
~
\begin{itemize}
\item[a)]  A local minimizer $\bfx^*$ of (\ref{SCP}) is a KKT point under Assumption \ref{ass-cq}.
\item[b)] A KKT point $\bfx^*$ of (\ref{SCP}) is  a local minimizer  if functions $f$  and $G_{mn}$ for all $m\in\M, n\in\N$ are  locally convex around $\bfx^*$.
\end{itemize}
\end{theorem}
\proof{Proof} 
a) According to \cite[6.12 Theorem]{RW1998}, a local minimizer $\bfx^*$ of problem \eqref{SCP} is $$-\nabla f(\bfx^*)\in\widehat{\rN}_{\cal F\cap\Omega}(\bfx^*)=\widehat{\rN}_{\cal F}(\bfx^*)+\widehat{\rN}_{\Omega}(\bfx^*),$$
by Assumption \ref{ass-cq}, which using \eqref{SG-Normal Cone} yields condition \eqref{KKT-point}.


  {
b) Let $(\bfx^*, \W ^*)$ be a KKT point satisfying  (\ref{KKT-point}). We prove the conclusion by two cases.}
\begin{itemize}[leftmargin=12pt]
\item  { {Case $\|\Z ^*\|^+_0 <s$.} Condition \eqref{KKT-point} and $\widehat \rN_{\S}(\Z ^*)=\{{\bf0}\}$
from \eqref{NS-exp} suffice to $\W ^*={\bf0}$ and $-\nabla f(\bfx^*)\in \widehat{\rN}_{\Omega}(\bfx^*)$, which by the local convexity of $f$ around $\bfx^*$ yields
\begin{eqnarray*} 
f(\bfx) \geq f( \bfx^*)+\langle \nabla f( \bfx^*),\bfx-\bfx^*\rangle \geq f(\bfx^*).
\end{eqnarray*}
for any $\bfx\in \CF \cap\Omega$ around $\bfx^*$, 
where the second inequality is due to $-\nabla f(\bfx^*)\in \widehat{\rN}_{\Omega}(\bfx^*)$.} This shows the local optimality of $\bfx^*$.
 
\item  {Case $\|\Z ^*\|^+_0 = s$.}
Consider a  local region $\mathbb N(\bfx^*,\epsilon)$ of $\bfx^*$ for a given sufficiently small radius $\epsilon>0$ and
$\bfx\in \CF\cap\Omega \cap\mathbb N(\bfx^*,\epsilon)$.
To derive the results, we claim several facts.
First, the local convexity of $G_{mn}$ around $\bfx^*$ leads to  
\begin{eqnarray}\label{yyAxx}
\eqspace{1.35}
\begin{array}{lll}
Z_{mn} -Z_{mn}^* = 
G_{mn}(\bfx)-G_{mn}(\bfx^*) 
 \geq  \langle \nabla G_{mn}(\bfx^*), \bfx - \bfx^* \rangle,\end{array}
\end{eqnarray}
for any $(m,n)\in{\M}\times {\N}$. It follows from $\W ^*\in \widehat{\rN}_\S(\Z ^*)$ in \eqref{KKT-point},   \eqref{gamma*}, and  \eqref{NS-exp} that
\begin{eqnarray}\label{yTlT}
\eqspace{1.35}
\begin{array}{lll}
 \W ^*_{\J_*}\geq0,\qquad \W ^*_{\overline \J_*}={\bf0},\qquad \Z ^*_{\J_*}= {\bf0}.
\end{array}
\end{eqnarray}
 Similar to \eqref{z*-Gx*}, let  $\Z =\G(\bfx).$  For any $\bfx\in \CF\cap\Omega \cap \mathbb N(\bfx^*,\epsilon)$, $\|\Z\|^+_0 \leq s$. Moreover, for a sufficiently small $\epsilon$,  we note from the continuity of $\G(\cdot)$ that $Z_{mn}>0$ if $Z_{mn}^*>0$, which indicates 
 $\|\Z\|^+_0 \geq \|\Z^* \|^+_0 = s$. Overall, $\|\Z\|^+_0  = s$ for any $\bfx\in \CF\cap\Omega \cap \mathbb N(\bfx^*,\epsilon)$. This means that  
 \begin{eqnarray*}
 \eqspace{1.35}
 \begin{array}{lll}
 \Gamma_+=\left\{n\in{\N}:\Z_{:n}^{\max}>0\right\}=\left\{n\in{\N}: (\Z^*)_{:n}^{\max}>0\right\}=\Gamma_+^*.
 \end{array}
 \end{eqnarray*}
Since $\Gamma_0^*\cap \Gamma_+^*=\emptyset $ and $\|\Z\|^+_0 =| \Gamma_+| = s$, the above condition indicates $\Z_{:\Gamma_0^*}\leq{\bf0}$, which combining $\J_*=V_{\Gamma_0^*}\subseteq {\M}\times\Gamma_0^*$ suffices to
\begin{eqnarray}\label{yTyT}
\Z_{\J_*}\leq{\bf0}.
\end{eqnarray}
{Finally, the above facts and the local convexity of $f$ can conclude  that
\begin{eqnarray*} 
 \eqspace{1.35}
 \begin{array}{lcl}
f(\bfx)&\geq&f( \bfx^*)+\langle \nabla f( \bfx^*),\bfx-\bfx^*\rangle \\
&{=} &  f( \bfx^*)+\langle \nabla f( \bfx^*)+\sum  W^*_{mn}\nabla G_{mn}(\bfx^*) , \bfx-\bfx^*\rangle-  \langle \sum  W^*_{mn}\nabla G_{mn}(\bfx^*) , \bfx-\bfx^*\rangle\\
&\overset{(\ref{KKT-point})}{\geq} &  f( \bfx^*)- \sum  W^*_{mn} \langle \nabla G_{mn}(\bfx^*) , \bfx-\bfx^*\rangle\\
& \overset{(\ref{yyAxx},\ref{yTlT})}{\geq}& f( \bfx^*)-\sum W^*_{mn}( Z_{mn} -Z_{mn}^*)\\
 &\overset{(\ref{yTlT})}{=}&  f( \bfx^*)-\sum_{  \J_*}  W^*_{mn} Z_{mn} \\
 &\overset{(\ref{yyAxx},\ref{yTyT})}{\geq}& f( \bfx^*), \end{array}
 \end{eqnarray*}
which demonstrates the local optimality of $\bfx^*$ to   problem (\ref{SCP}).}  \Halmos
 \end{itemize}
\endproof

\subsection{$\tau $-stationary points}
Our next result is about the $\tau $-stationary point of  (\ref{SCP}) defined as follows: A point $\bfx^*\in\R^{K}$  is called a $\tau $-stationary point of  (\ref{SCP}) for some $\tau >0$ if there is a $\W ^*\in\R^{M\times N}$ such that
\begin{eqnarray}\label{eta-point}
\eqspace{1.35}
\left\{
 { \begin{array}{lll}
\bfx^*&=&{\bpi_\Omega}\left(\bfx^* - \tau [\nabla f(\bfx^*)+\sum  W^*_{mn}  \nabla G_{mn}(\bfx^*)] \right),\\
\Z^*&\in&{\bpi_\S}\left( \Z^* +\tau  \W ^*\right).
 \end{array}}\right.
  \end{eqnarray}
The following result shows that a $\tau $-stationary point also has a close relationship with the local minimizer of problem (\ref{SCP}).
\begin{theorem}[$\tau $-stationary points and local minimizers]\label{nec-suff-opt-con-sta} ~
\begin{itemize}[leftmargin=17pt]
\item[a)]
Under Assumption \ref{ass-cq}, a local minimizer $\bfx^*$ is also a $\tau $-stationary point  
\begin{itemize}[leftmargin=17pt]
\item either for any $\tau>0$ if $\|\Z^* \|^+_0<s$,  
\item or for any $0<\tau\leq\tau_*:= {(\Z^*)_{s}^{\downarrow}}
  /\max_{n\in\Gamma^*_0}\| \W ^*_{:n}\|$ if $\|\Z^* \|^+_0=s$, where $\W ^*$ satisfies (\ref{KKT-point}).
    \end{itemize}
\item[b)]  A  $\tau $-stationary point with $\tau>0$ is a KKT point and thus a local minimizer if functions $f$ and $G_{mn}$ for all $m\in\M, n\in\N$ are locally convex around $\bfx^*$.
\end{itemize}
\end{theorem}
\proof{Proof}
a) It follows from  Theorem  \ref{nec-suff-opt-con-KKT} that a local minimizer $\bfx^*$  is also a KKT point. Therefore, we have condition \eqref{KKT-point}. {Since $\Omega$ is convex, the first condition in \eqref{KKT-point} is equivalent to
\begin{eqnarray}\label{Z*-pi-S-0}
\eqspace{1.35}
  \begin{array}{lll}
\bfx^*&=&{\rm argmin}_{\bfx\in\Omega} \|\bfx-(\bfx^*-\tau [\nabla f(\bfx^*)+\sum  W^*_{mn}  \nabla G_{mn}(\bfx^*)])\|^2\\[1ex]
&=&{\bpi_\Omega}\left(\bfx^* - \tau [\nabla f(\bfx^*)+\sum  W^*_{mn}  \nabla G_{mn}(\bfx^*)] \right).
 \end{array}
  \end{eqnarray}
 We next show}
\begin{eqnarray}\label{Z*-pi-S}
\Z ^*\in{\bpi_\S}\left( {\Z}^*+\tau  \W ^*\right).
\end{eqnarray}  
If $\|\Z ^*\|^+_0<s$, then  \eqref{KKT-point} and $\widehat \rN_{\S}(\Z ^*)=\{{\bf0}\}$
from \eqref{NS-exp} yield $\W ^*={\bf0}$, resulting in \eqref{Z*-pi-S}  for any $\tau>0$.
Now consider the case of $\|\Z ^*\|^+_0=s$. Under such a case, conditions  \eqref{yTlT} hold, 
 which by  $\J_*=V_{\Gamma_0^*}\subseteq {\M}\times\Gamma_0^*$ allows us to  derive that
\begin{eqnarray}\label{eqp4}
  \W ^*_{:\overline \Gamma^*_0 }={\bf0}, ~~{\bf0}\geq \Z ^*_{:\Gamma^*_0}~\bot ~\W ^*_{:\Gamma^*_0}\geq{\bf0}.
\end{eqnarray}
By $0<\tau\leq\tau_*$, we have 
\begin{eqnarray*}
\eqspace{1.35}
\begin{array}{lcl}\forall~n\in \Gamma_0^*,~~\tau \|\W _{:n}^*\|\leq\tau_* \max_{{n\in \Gamma^*_0}}\|\W _{:n}\| = \tau_* r_* = (\Z^*)^{\downarrow}_s.
\end{array}\end{eqnarray*}
The above condition and \eqref{eqp4} show that \eqref{Z*-pi-S} by Proposition \ref{pro-eta}.

b)  We only prove that a \ts\ is a KKT point because Theorem \ref{nec-suff-opt-con-KKT} b) enables us to conclude the conclusion immediately. {We note that a \ts\ satisfies \eqref{Z*-pi-S-0} and \eqref{Z*-pi-S}. The former implies the first condition in \eqref{KKT-point} and $\bfx^*\in\Omega$, and the latter yields $\|\Z ^*\|^+_0\leq s $.} Comparing \eqref{eta-point} and \eqref{KKT-point}, we only need to prove $\W ^*\in \widehat{\rN}_\S(\Z ^*)$. If $\|\Z ^*\|^+_0< s $, then Proposition \ref{pro-eta} shows $\W^*={\bf0}\in \widehat{\rN}_\S(\Z ^*)$ by \eqref{NS-exp}. If  $\|\Z ^*\|^+_0 = s $, then Proposition \ref{pro-eta} shows \eqref{eqp4},  
which by the definition of $\J_*=V_{\Gamma_0^* }$ in \eqref{gamma*} indicates 
\begin{eqnarray*}    
\W_{\J_*}^*\geq 0,~~\W_{\overline \J_*}^*={\bf0}, 
\end{eqnarray*}
contributing to $\W ^*\in \widehat{\rN}_\S(\Z ^*)$ by \eqref{NS-exp}.
  \Halmos\endproof
 \subsection{Relationships to the binary integer programming} 
In this part, we study two BIP reformulations for problem \eqref{SCP}. It is easy to see that  problem \eqref{SCP} can be equivalently reformulated as the following binary integer programming,
\begin{equation}  \label{BIP}
 \eqspace{1.35}
\begin{array}{l}
 {\min}_{\bfx,\bfy}~f(\bfx),~~
{\rm s.t.}~\bfy\in \{0,1\}^N \cap {\cal B},~ \bfx  \in \Omega \cap \CD(\bfy), 
\end{array}
\tag{BIP}\end{equation}
where ${\bf1}$ is a vector with all entries and
\begin{equation*} 
 \eqspace{1.35}
\begin{array}{ll}
 {\cal B}  &:=  \left\{ \bfy\in \R^N: \langle {\bf1}, \bfy\rangle \geq N-s \right\}, \\
\CD(\bfy)&:= \left\{  \bfx  \in \R^{K}: ~ 
 y_n\cdot{\max}_{m\in\M} G_{mn}(\bfx)  \leq 0,~\forall~n\in\N \right\}. 
\end{array}
\end{equation*}
One can rewrite the feasible set of \eqref{BIP} as
 \begin{eqnarray*}
 \label{BIP-feas-1}
 \eqspace{1.35}
 \begin{array}{lll}
 {\CE := \left\{(\bfx,\bfy)\in \R^{K} \times \{0,1\}^N : \bfx\in\Omega \cap \CD(\bfy) , \bfy\in{\cal B} \right\}.}
 \end{array}
 \end{eqnarray*}
For a point $(\bfx^*, \bfy^*)\in  \CE$,  since  $\{0,1\}^N$ includes a unique $\bfy^*$, the  tangent cone of $ \CE$  at point $(\bfx^*, \bfy^*)$ can be calculated by 
\begin{equation} \label{tangent-decom}
 \eqspace{1.35}  {\begin{array}{lcl}
{\rT}_{ \CE }(\bfx^*, \bfy^*)  &=&  {\rT}_{\bigcup_{ \bfy\in\{0,1\}^N  } ( \Omega \cap \CD(\bfy) \times {\cal B} ) }(\bfx^*, \bfy^*) \\
&\overset{\eqref{cup-tangent}}{=} &\bigcup_{\left(\bfy\in\{0,1\}^N,\bfy=\bfy^*\right) } {\rT}_{\left(\Omega \cap \CD(\bfy) \times  {\cal B}\right)}(\bfx^*, \bfy^*) \\
&= &   {\rT}_{ (\Omega \cap\CD(\bfy^*)) \times  (\{\bfy^*\}\cap{\cal B}) }(\bfx^*, \bfy^*)\\
&= &   {\rT}_{ (\Omega \cap\CD(\bfy^*)) \times  \{\bfy^*\}}(\bfx^*, \bfy^*)\\
& \subseteq &   {\rT}_{ \Omega \cap\CD(\bfy^*)}(\bfx^*) \times \rT_{  \{\bfy^*\} }( \bfy^*)\\
&=&  {\rT}_{\Omega \cap\CD(\bfy^*)  }(\bfx^*) \times  \{0\},
\end{array}}  \end{equation}
where  $\subseteq$ is from \cite[6.41 Proposition]{RW1998}, which turns to an equality if  $\CD(\bfy^*)$ is convex. This can be ensured if $\{G_{mn}: m\in\M, n\in \Gamma_1\}$ are locally convex  around $\bfx^*$. Here $\Gamma_1:=\{n\in\N: y^*_n=1\}$.  By denoting the active set of $\bfy^*$ as
 $$\J^B_*:=\{(m,n)\in\M\times\N: G_{mn}(\bfx^*)=0, ~n\in\Gamma_1\},$$
one can calculate that  
 \begin{equation} \label{TD-*} 
 \eqspace{1.35}
 \begin{array}{lll}
 \rT_{\CD(\bfy^*)} (\bfx^*) &\subseteq&\{{\bfd\in\R^K}:~\langle\bfd, \nabla  G_{mn}(\bfx^*)\rangle \leq 0, (m,n)\in \J^B_*\}.
 \end{array}\end{equation}  
 The equation holds  if  $\nabla_{\J^B_*}\G(\bfx^*)$ is full column rank from \cite[Lemma 12.2]{nocedal2006quadratic}. Based on these facts we define the following binary KKT (BKKT) point. We say  $(\bfx^*, \bfy^*)$ is a BKKT point of problem \eqref{SCP}  if there is a $\W ^*\in\R^{M\times N}$ such that 
  \begin{eqnarray}\label{BKKT-point}
 { \left\{\begin{array}{rll}
 -\nabla f(\bfx^*)- \sum W^*_{mn}  \nabla G_{mn}(\bfx^*) &\in&\widehat{\rN}_{\Omega}(\bfx^*),\\[1ex]
  W ^*_{mn}\geq 0, ~(m,n)&\in& \J^B_*,\\[1ex]
  ~ W ^*_{mn} = 0,~ (m,n)&\notin &\J^B_*,\\[1ex]
 (\bfx^*, \bfy^*)&\in&\CE.
 \end{array}\right.}
  \end{eqnarray} 
Similarly, we have the following first-order necessary optimality conditions for \eqref{BIP} or \eqref{SCP}. 
\begin{lemma} Let $(\bfx^*, \bfy^*)$ be a local  minimizer of (\ref{BIP}) or (\ref{SCP}). If  $\{G_{mn}: m\in\M, n\in \Gamma_1\}$ are locally convex  around $\bfx^*$, $\nabla_{\J^B_*}\G(\bfx^*)$ is full column rank, and $\Omega$ and $\CD(\bfy^*)$ can not be separated,  then it satisfies (\ref{BKKT-point}).
 \end{lemma}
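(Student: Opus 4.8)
The plan is to invoke the variational characterization of local minimizers together with the tangent/normal cone computations already assembled in the excerpt. First I would recall the standard first-order necessary condition from \cite[Theorem 6.12]{RW1998}: if $(\bfx^*,\bfy^*)$ is a local minimizer of \eqref{BIP}, then $-\nabla_{(\bfx,\bfy)} f(\bfx^*) \in \widehat{\rN}_{\CE}(\bfx^*,\bfy^*)$. Since the objective $f$ does not depend on $\bfy$, the gradient is $(\nabla f(\bfx^*),0)$, so this reads $(-\nabla f(\bfx^*),0) \in \widehat{\rN}_{\CE}(\bfx^*,\bfy^*)$. The normal cone is the polar of the tangent cone $\rT_{\CE}(\bfx^*,\bfy^*)$, which was already computed in the excerpt.

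Next I would use the displayed chain of inclusions for $\rT_{\CE}(\bfx^*,\bfy^*)$. Under the hypothesis that $\{G_{mn}: n\in\Gamma_1\}$ are locally convex around $\bfx^*$, the inclusion from \cite[Proposition 6.41]{RW1998} becomes an equality, so $\rT_{\CE}(\bfx^*,\bfy^*) = \rT_{\CD(\bfy^*)}(\bfx^*)\times\{0\}$. Under the additional hypothesis that $\nabla_{\J^B_*}\G(\bfx^*)$ is full column rank, \eqref{TD-*} becomes an equality via \cite[Lemma 12.2]{nocedal2006quadratic}, giving $\rT_{\CD(\bfy^*)}(\bfx^*) = \{\bfd\in\R^K:\langle\bfd,\nabla G_{mn}(\bfx^*)\rangle\leq 0, (m,n)\in\J^B_*\}$. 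Polarizing a polyhedral cone of this form is classical (Farkas' lemma / the Minkowski--Weyl duality for finitely generated cones): the polar of $\{\bfd:\langle\bfd,\nabla G_{mn}(\bfx^*)\rangle\leq 0,(m,n)\in\J^B_*\}\times\{0\}$ is $\{\sum_{\J^B_*} W_{mn}\nabla G_{mn}(\bfx^*): W_{mn}\geq 0\}\times\R^N$. Hence $-\nabla f(\bfx^*)=\sum_{\J^B_*}W^*_{mn}\nabla G_{mn}(\bfx^*)$ for some $W^*_{mn}\geq 0$ on $\J^B_*$; extending $\W^*$ by zero off $\J^B_*$ yields exactly the first three lines of \eqref{BKKT-point}, while $(\bfx^*,\bfy^*)\in\CE$ is just feasibility, giving the fourth line.

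For the equivalence between \eqref{BIP} and \eqref{SCP} phrasing, I would note that a local minimizer $\bfx^*$ of \eqref{SCP} lifts to a local minimizer $(\bfx^*,\bfy^*)$ of \eqref{BIP} by choosing $\bfy^*$ so that $y^*_n=0$ exactly on an index set of size at most $s$ containing all columns with $(\G(\bfx^*))^{\max}_{:n}>0$ (so that $\bfx^*\in\CD(\bfy^*)$ and $\langle\mathbf 1,\bfy^*\rangle\geq N-s$), which makes $\J^B_*$ a subset of the active set $\J_*$; and conversely any local minimizer of \eqref{BIP} projects to one of \eqref{SCP}. I would remark that this is the standard equivalence already used implicitly when \eqref{BIP} was introduced, so it can be stated briefly.

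The main obstacle is essentially bookkeeping rather than anything deep: one must be careful that the two hypotheses (local convexity and full column rank) are precisely what upgrade the two inclusions in the tangent-cone chain and in \eqref{TD-*} to equalities, and that polarization commutes correctly with the product structure $(\cdot)\times\{0\}$ so that the $\bfy$-component of the normal cone is all of $\R^N$ and imposes no constraint. A secondary subtlety is the choice of $\bfy^*$ and the resulting identity of $\J^B_*$ when passing between the two problem formulations; one should check that the big-$M$ formulation $\CD^2$ and the bilinear formulation $\CD^1$ yield the same active set $\J^B_*$ at $(\bfx^*,\bfy^*)$, which holds because $M_n$ is chosen large enough that the constraints with $y^*_n=0$ are inactive. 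Once these points are pinned down, the conclusion \eqref{BKKT-point} follows immediately.
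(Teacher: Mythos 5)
Your proposal is correct and follows essentially the same route as the paper: invoke the first-order necessary condition from \cite[Theorem 6.12]{RW1998}, use the two hypotheses to turn the tangent-cone inclusions (the product decomposition and \eqref{TD-*}) into equalities, and polarize to obtain the explicit normal cone \eqref{B-Normal cone}, from which \eqref{BKKT-point} follows. The extra details you supply (the Farkas-type polarization and the lifting of an \eqref{SCP} minimizer to a \eqref{BIP} minimizer via the choice of $\bfy^*$) are points the paper leaves implicit, but they do not change the argument.
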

 \proof{Proof} By \cite[Theorem 6.12]{RW1998}, a necessary
condition for $(\bfx^*, \bfy^*)$ to be locally optimal is  
 \begin{eqnarray*}\eqspace{1.35}
\begin{array}{lll}
 \langle \nabla f(\bfx^*),  \bfd \rangle = \langle (\nabla f(\bfx^*),{\bf0}), (\bfd, \bfd') \rangle \geq 0,~~\forall  (\bfd, \bfd') \in \rT_{\CE}(\bfx^*, \bfy^*).
\end{array} \end{eqnarray*}  
Since $\{G_{mn}: n\in \Gamma_1\}$ are locally convex around $\bfx^*$,  $\CD(\bfy^*)$ is convex, which combining with the convexity of $\Omega$ and \eqref{tangent-decom} contributes to $\rT_{\CE}(\bfx^*, \bfy^*)  = ({\rT}_{ \Omega \cap \CD(\bfy^*) }(\bfx^*)) \times  \{{\bf0}\}$. As a result, 
\begin{equation} \label{B-Normal cone}
\begin{array}{lcl}
 \widehat{\rN}_{ \CE }(\bfx^*, \bfy^*) =  \widehat{\rN}_{\Omega \cap \CD(\bfy^*) }(\bfx^*) \times \R^N=  (\widehat{\rN}_{\Omega  }(\bfx^*) +\widehat{\rN}_{\CD(\bfy^*) }(\bfx^*) )\times \R^N,
\end{array}  \end{equation}
where the second equality holds because $\CD(\bfy^*)$ and $\Omega$ are convex and can not be separated, and \cite[6.42 Theorem]{RW1998}. 
The the  full column rankness of $\nabla_{\J^B_*}\G(\bfx^*)$ implies the equation holds in \eqref{TD-*}, thereby leading to 
\begin{equation*}
\begin{array}{lcl}
 \widehat{\rN}_{\CD(\bfy^*) }(\bfx^*, \bfy^*) =  \left\{ \sum W^*_{mn}  \nabla G_{mn}(\bfx^*): \begin{array}{l}
W ^*_{mn}\geq 0, ~(m,n)\in \J^B_,\\
 W ^*_{mn} = 0,~ (m,n) \notin  \J^B_*  
\end{array}  \right\}.
\end{array}  \end{equation*}
Then from \cite[6.12 Theorem]{RW1998}, a  minimizer of problem \eqref{BIP} satisfies
$-(\nabla f(\bfx^*);{\bf0})\in \widehat{\rN}_{ \CE }(\bfx^*, \bfy^*)$, which by \eqref{B-Normal cone} shows the desired result.
 \Halmos\endproof 
 One can easily see that a KKT point of problem (\ref{SCP}) must be a  BKKT  point. In fact, we can let $\bfy^*$ satisfy $y^*_n=1$ if $n\in \Gamma_0^*$ and $y^*_n=0$  otherwise, leading to   $\J^B_*=\J_*$. Then conditions \eqref{BKKT-point} are satisfied due to  \eqref{KKT-point}. Based on this assertion, we have the following relationships.
\begin{corollary}\label{coro-relation} Let $\bfx^*\in\CF$ be one of the $\tau$-stationary point, KKT point, BKKT point, and local minimizer. Define the following cases and conditions: 
\begin{itemize}
\item {\bf Case 1:} Any $\tau>0$ if $\|\Z^*\|_0^+ {<} s$ ; 
\item {\bf Case 2:} Any $\tau  \in (0,\tau_*]$ if $\|\Z^*\|_0^+ {=} s$,   where $\tau_*$ is defined in Theorem \ref{nec-suff-opt-con-sta};
\item {\bf Cond 1:}  $\nabla_{\J_*} \G (\bfx^*)$  is full column rank, and when $\|\Z^*\|_0^+ {=} s$ there is condition (\ref{CQ}); 
\item {\bf Cond 2:} $f$  and each $G_{mn}$  are  locally convex around $\bfx^*$;
\item {\bf Cond 3:}  $\nabla_{\J^B_*} \G (\bfx^*)$  is full column rank, $\{G_{mn}: m\in\M, n\in \Gamma_1\}$ are  locally convex around $\bfx^*$, and $\Omega$ and $\CD(\bfy^*)$ can not be separated. 
\end{itemize} 
{Then we have the following relationships (also shown in (\ref{relation})):
\begin{itemize}
\item[a)] A local minimizer is a KKT point under Cond 1. If Cond 3 holds, then the opposite holds true;
\item[b)] A $\tau$-stationary point is a KKT point.  The opposite holds true for either Case 1 or Case 2;
\item[c)] A KKT point is a BKKT point;
\item[d)] A local minimizer is a BKKT point under Cond 2.
\end{itemize}}
\end{corollary} 

\vspace{2mm}
\begin{remark}The above theorem means that a $\tau$-stationary point for any $\tau>0$ is a KKT point. For some $\tau$ in a particular range $(0,\tau_*]$, a KKT point is also a $\tau$-stationary point. Therefore, $\tau$-stationary points are equivalent to KKT points under \textbf{Case 2}. However, in general,  being a $\tau$-stationary point is a stronger optimality condition than being a KKT point which is also better than being a  BKKT point. In addition, it is worth mentioning that even under \textbf{Cond 3}, a BKKT point may not be a local minimizer. We illustrate this by giving the following example. Let 
$$(M, N, K, s)=(1,2,2,1),~~ f(\bfx)=(x_1-2)^2,~~ \G(\bfx)=(x_1^2-x_2, x_2-1)\in\R^{1\times 2},~~ \Omega=\R^2.$$ 
Clearly, $f$  and each $G_{mn}$  are convex. Problems \eqref{SCP} and \eqref{BIP} are
 \begin{eqnarray} \label{example-bip}
  \eqspace{1.35}
     \begin{array}{rllrl}
\min& ~(x_1-2)^2,&&\min& ~(x_1-2)^2,\\
{\rm s.t.}& ~x_1^2-x_2\leq 0,&~\qquad\qquad&{\rm s.t.}&~ y_1+y_2\geq 1,~y_1\in\{0,1\},~ y_2\in\{0,1\},\\
 &~\text{or}~x_2-1\leq 0 ,&\qquad&& ~y_1(x_1^2-x_2)\leq 0, ~y_2(x_2-1)\leq 0.
     \end{array} 
  \end{eqnarray} 
For $\bfx^*=(1,1)^\top$ and $\bfy^*=(1,1)^\top$, it has $\|\G(\bfx^*)\|_0^+=0<1,~ (\bfx^*,\bfy^*)\in\CE$, and
 \begin{eqnarray*}  
     \begin{array}{lllll}
\J_* = \J^B_* =\{(1,1),(1,2)\},~~\nabla_{\J_*}\G(\bfx^*)=\left[\begin{matrix}
  2& 0\\
  -1 &1
  \end{matrix} \right].
     \end{array} 
  \end{eqnarray*}   
Clearly, $\nabla_{\J_*}\G(\bfx^*)$ is full column rank and thus from  \eqref{NSG-exp} and   \eqref{B-Normal cone}, $$\widehat{\rN}_{\S}(\Z^*)=\{{\bf0}\},~~~~ \widehat{\rN}_{ \CE }(\bfx^*, \bfy^*) =  \left\{ \left[\begin{matrix}
  2W '_{11}\\
  -W '_{11}+W '_{12}
  \end{matrix} \right]: W '_{11}\geq 0, ~W '_{12}\geq 0 \right\}\times \R^2.$$
These further imply that $\bfx^*$ does not satisfy conditions \eqref{KKT-point} (namely $\bfx^*$  is not a KKT point)  but satisfies \eqref{BKKT-point} (namely $\bfx^*$  is a BKKT point) as there exists $W '_{11} = W '_{12}=1$ such that 
 \begin{eqnarray*}  
 \eqspace{1.35}
     \begin{array}{lllll}
  \nabla f(\bfx^*) +  \sum W_{mn}^*   \nabla  G_{mn}(\bfx^*)  =(-2,0)^\top \neq {\bf0}, ~~\forall~ \W^* \in \widehat{\rN}_\S(\Z ^*),\\
  \nabla f(\bfx^*) + (2W '_{11}, -W '_{11}+ W '_{12})^\top={\bf0}. 
  \end{array} 
  \end{eqnarray*} 
  However,  $\bfx^*$ is not a local minimizer. To see this, for any small radius $\epsilon\in(0,1)$, in  neighbourhood ${\mathbb N}( \bfx^* ,\epsilon)$ of $\bfx^*$, we could find a point $\bfx'=(1+\epsilon/2,1)$ and $\bfy'=(0,1)$ to be feasible to \eqref{example-bip} but $f(\bfx')=(1-\epsilon/2)^2<1=f(\bfx^*)$.
  \end{remark}
 \section{Semismooth Newton Method}\label{sec:Newton}
 As shown in \eqref{relation},  a  \ts\ is a better solution than a KKT or BKKT point. Therefore,  in this section, we aim to find a \ts\ to problem \eqref{SCP} by developing a Newton-type algorithm.  Hereafter, for   $\tau>0$, we always denote
 \begin{eqnarray}\label{U-Gamma}
  \eqspace{1.35}
     \begin{array}{lll}
     \bfw&:=&(\bfx; {\rm vec}(\W))\in\R^{K+MN},\\
     \Z&:=&\G(\bfx),\\
     \L&:=&\Z+\tau  \W,\\
U_{T}&:=&\{(m,n)\in\M \times T:  \Lambda _{mn}\geq 0\},~\text{where}~T\in \T( \L; s),\\
{\varphi(\bfw;\J)}&{:=}&{\bfx-\Pi_{\Omega}\left(\bfx-\tau[\nabla f(\bfx)+ \sum_{ \J }W_{mn}   \nabla  G_{mn}(\bfx)]\right),}
     \end{array} 
  \end{eqnarray} 
where $(\bfx;\bf{y})=(\bfx^\top~\bf{y}^\top)^\top$ and ${\rm vec}(\W)$ transforms matrix $\W$ into a column vector by vertically stacking its columns. Similar definitions to \eqref{U-Gamma} are also applied to $\bfw^*:=(\bfx^*; \ve(\W^*))$ and $\bfw^{\ell}:=(\bfx^{\ell}; \ve(\W^{\ell}))$, where the former is a \ts\ and the latter is the point generated by our proposed algorithm at the $\ell$th step.  Additionally, we shall point out the difference between the definitions of $U_{T}$ and $V_{\Gamma}$ in \eqref{notation-z}. The former is given upon $\L$ while the latter is defined based on $\Z$. However, for a $\tau$-stationary point, they two are identical (see Theorem \ref{sta-eq}).

{ Let $\partial\varphi(\cdot;\J)$ be the Clarke generalized Jacobian of $\varphi(\cdot;\J)$ for fixed $\J$. Decompose $\H\in\partial\varphi(\bfw;\J)$ as $\H:=[  \P~\Q~{\bf0}]$ with $\P\in\R^{K\times K}, \Q\in\R^{K\times |\J|}$, and ${\bf0}\in\R^{K\times |\overline\J|}$,} and denote a system of equations as
   \begin{eqnarray}\label{equation-F-mu}
 \F(\bfw;\J) := \left[
  \eqspace{1.25}
     \begin{array}{c}
   {\varphi(\bfw;\J) } \\
  {\rm vec}(\Z_{\J})\\
     {\rm vec}(\W_{\overline \J})\\
     \end{array}
     \right]
  \end{eqnarray}
 and an associated matrix as
  \begin{eqnarray}\label{F-mu}
 \nabla \F_{\mu}(\bfw;{\J}):=\left[  \eqspace{1.25}
     \begin{array}{ccccc}
       {\P} &~~&  {\Q}&~~&{\bf0}\\
     \nabla_{ \J} \G(\bfx)^\top&&-\mu {\bf I}_{|  \J|}&~~&{\bf0}\\
     {\bf0}&&{\bf0}&~~&{\bf I}_{|\overline \J|}\\
     \end{array}
     \right],
  \end{eqnarray}
{ where    $\H=[  \P~\Q~{\bf0}]\in  \partial \varphi(\bfw;\J) $ and ${\bf I}_n$ is the $n$th order of identity matrix. Therefore, $\nabla \F_0 (\bfw;{\J})$  is an element of   $ \partial \F (\bfw;{\J})$ for given $\J$. Thanks to this, hereafter we always  let}
    \begin{eqnarray}\label{sta-eq-2}
  \nabla \F (\bfw;{\J}):=\nabla \F_{0}(\bfw;{\J}).
  \end{eqnarray} 
 \subsection{Stationary equations} 
To employ the Newton method, we need to convert a \ts\ satisfying  \eqref{eta-point} to a system of equations, stated as the following theorem. 
\begin{theorem}[$\tau$-stationary equations]\label{sta-eq} A point $\bfx^*$  is a \ts\ with  $\tau>0$  of (\ref{SCP}) if and only if there is a $\W ^*\in\R^{M\times N}$ such that
\begin{eqnarray}\label{sta-eq-1}
\eqspace{1.35}
\begin{array}{lll}
 \T(\L^*;s)&\ni&\Gamma_0^*,\\
 U_{\Gamma_0^*}&=&\J_*,\\
  \F(\bfw^*;\J_*)&=&{\bf0},
 \end{array} \end{eqnarray}
where $\J_*$ is defined as (\ref{gamma*}). 
\end{theorem}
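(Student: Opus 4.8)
The plan is to unfold the definition of a $\tau$-stationary point from \eqref{eta-point} and translate the projection membership $\Z^*\in\bpi_\S(\L^*)$, where $\L^*=\Z^*+\tau\W^*$, into the three-line system \eqref{sta-eq-1} using Proposition~\ref{pro-eta} and Proposition~\ref{pro-1}. I would treat the two directions separately, but both hinge on the same dictionary between the projection inclusion and the combinatorial/analytic data $\Gamma_0^*$, $\J_*$, and $U_{\Gamma_0^*}$.

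For the ``only if'' direction, suppose $\bfx^*$ is a $\tau$-stationary point with multiplier $\W^*$. The second line of \eqref{eta-point} says $\Z^*\in\bpi_\S(\L^*)$, so Proposition~\ref{pro-eta} applies: it immediately gives $\T(\L^*;s)=\{\Gamma_0^*\}$, which yields the first line $\Gamma_0^*\in\T(\L^*;s)$. To get $U_{\Gamma_0^*}=\J_*$, I would split on whether $\|\Z^*\|^+_0<s$ or $=s$. In the strict case, Proposition~\ref{pro-eta} forces $\W^*=0$, so $\L^*=\Z^*$ and $U_{\Gamma_0^*}=\{(m,n):Z^*_{mn}\geq 0,\ n\in\Gamma_0^*\}$; since $n\in\Gamma_0^*$ means $(\Z^*)^{\max}_{:n}=0$, every such entry satisfies $Z^*_{mn}\leq 0$, so $Z^*_{mn}\geq 0$ collapses to $Z^*_{mn}=0$, giving exactly $\J_*=V_{\Gamma_0^*}$. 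In the equality case, Proposition~\ref{pro-eta} yields $0\geq\Z^*_{:\Gamma_0^*}\perp\W^*_{:\Gamma_0^*}\geq 0$ and $\W^*_{:\overline{\Gamma_0^*}}=0$; from $0\geq Z^*_{mn}$, $W^*_{mn}\geq 0$, and $Z^*_{mn}W^*_{mn}=0$ one reads off $\Lambda^*_{mn}=Z^*_{mn}+\tau W^*_{mn}\geq 0 \iff Z^*_{mn}=0$ for $n\in\Gamma_0^*$ — again giving $U_{\Gamma_0^*}=\J_*$. Finally, $\F(\bfw^*;\J_*)=0$ is assembled from: the gradient equation (first line of \eqref{eta-point}) restricted to $\J_*$, which is legitimate because $\W^*_{\overline\J_*}=0$ (from $\W^*_{:\overline{\Gamma_0^*}}=0$ plus $\Z^*_{\J_*}=0$ on the $\Gamma_0^*$ block); the vanishing $\ve(\Z^*_{\J_*})=0$; and $\ve(\W^*_{\overline\J_*})=0$.

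For the ``if'' direction, assume $\W^*$ satisfies \eqref{sta-eq-1}. The goal is to recover \eqref{eta-point}, i.e.\ the gradient equation and $\Z^*\in\bpi_\S(\L^*)$. The gradient equation follows from the first block of $\F(\bfw^*;\J_*)=0$ together with $\W^*_{\overline\J_*}=0$ (third block), which lets me replace $\sum_{\J_*}$ by the full sum $\sum$. For the projection inclusion I would verify the hypotheses of Proposition~\ref{pro-eta}'s equivalent condition \eqref{uPu-equ}: from $\Z^*_{\J_*}=0$ (second block) and $\W^*_{\overline\J_*}=0$, together with $U_{\Gamma_0^*}=\J_*$ (so that for $n\in\Gamma_0^*$, $\Lambda^*_{mn}\geq 0 \iff (m,n)\in\J_*$, forcing the complementarity $0\geq\Z^*_{:\Gamma_0^*}\perp\W^*_{:\Gamma_0^*}\geq 0$), and from $\Gamma_0^*\in\T(\L^*;s)$ (which pins down $\|\Z^*\|^+_0$ and the norm inequality $\tau\|\W^*_{:n}\|\leq(\Z^*)^\downarrow_s$ for $n\in\Gamma_0^*$ via the ordering built into \eqref{T-z}), all the clauses of \eqref{uPu-equ} hold; Proposition~\ref{pro-eta} then delivers $\Z^*\in\bpi_\S(\L^*)$.

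The main obstacle, I expect, is bookkeeping the case split $\|\Z^*\|^+_0<s$ versus $=s$ consistently across both directions, and in particular showing that $\Gamma_0^*\in\T(\L^*;s)$ \emph{plus} $U_{\Gamma_0^*}=\J_*$ is exactly strong enough to reconstruct the norm-threshold condition $\tau\|\W^*_{:n}\|\leq(\Z^*)^\downarrow_s$ in \eqref{uPu-equ} — this requires unwinding the definition of $\T(\cdot;s)$ in \eqref{T-z}, noting that membership of $\Gamma_0^*$ forces the ``selected'' index set $\Gamma_s$ for $\L^*$ to be $\Gamma_+^*$ (since $\Lambda^*_{:n}=\Z^*_{:n}$ for $n\notin\Gamma_0^*$), so that $(\L^*)^\downarrow_s$ equals the relevant threshold and the remaining columns $n\in\Gamma_0^*$ must have $\|(\L^*_{:n})^+\|=\tau\|\W^*_{:n}\|$ not exceeding it. The rest is routine translation between complementarity on $\Z^*,\W^*$ and the sign pattern of $\L^*$.
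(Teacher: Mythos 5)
Your necessity argument is essentially the paper's: pass through Proposition~\ref{pro-eta} to get $\T(\L^*;s)=\{\Gamma_0^*\}$, use the complementarity in \eqref{uPu-equ} to identify $U_{\Gamma_0^*}$ with $\J_*$ (splitting on $\|\Z^*\|^+_0<s$, where $\W^*=0$, versus $=s$), and observe that $\W^*$ vanishes off $\J_*$ so the full gradient sum collapses to the $\J_*$-sum; that part is fine. The sufficiency is where you genuinely diverge. The paper never goes back through \eqref{uPu-equ}: it feeds $T=\Gamma_0^*\in\T(\L^*;s)$ directly into the projection formula \eqref{psz} of Proposition~\ref{pro-1}, so that $\left[(\L^*_{:\Gamma_0^*})^-,\ \L^*_{:\overline{\Gamma_0^*}}\right]\in\bpi_\S(\L^*)$, and then checks entrywise (using $\Z^*_{\J_*}=0$, $\W^*_{\J_*}\geq0$ from $U_{\Gamma_0^*}=\J_*$, $\W^*_{\overline{\J_*}}=0$, and $Z^*_{mn}<0$ on the remaining $\Gamma_0^*$-entries) that this matrix is exactly $\Z^*$; no case split on $\|\Z^*\|^+_0$ and no norm-threshold estimate are needed. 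Your route—re-verifying all clauses of \eqref{uPu-equ} and invoking the ``if'' part of Proposition~\ref{pro-eta}—is valid but roundabout: the proof of that ``if'' part itself reconstructs $\Gamma_0^*\in\T(\L^*;s)$ from the threshold inequality and then applies Proposition~\ref{pro-1}, so you are deriving $\tau\|\W^*_{:n}\|\leq(\Z^*)^{\downarrow}_{s}$ from the very membership $\Gamma_0^*\in\T(\L^*;s)$ that the lemma re-proves. Your key observation for that derivation (with $\W^*_{:\overline{\Gamma_0^*}}=0$, the selected top-$s$ set for $\L^*$ must be $\Gamma_+^*$, hence $(\L^*)^{\downarrow}_{s}=(\Z^*)^{\downarrow}_{s}$ and the leftover $\Gamma_0^*$-columns obey the threshold) is correct and carries the case $\|\Z^*\|^+_0=s$.

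One clause of your sufficiency sketch is left unaddressed: when $\|\Z^*\|^+_0<s$, condition \eqref{uPu-equ} demands $\W^*=0$, which is not among the facts you list ($\Z^*_{\J_*}=0$, $\W^*_{\overline{\J_*}}=0$, complementarity, threshold). It does follow from your own observation—if $r<s$ the selected set equals all columns of $\L^*$ with positive maximum, which must then coincide with $\Gamma_+^*$, so no column in $\Gamma_0^*$ can contain a positive entry of $\L^*$, forcing $\W^*_{\J_*}=0$ and hence $\W^*=0$—but you should state this explicitly, or simply adopt the paper's shorter route via \eqref{psz}, under which the subcase disappears.
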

\proof{Proof} 
First of all, we denote 
\begin{eqnarray*} 
\eqspace{1.35}
\begin{array}{ll}
 \J_-:=( {\M}\times\Gamma_0^*) \setminus \J_*=\{(m,n):~Z_{mn}^* \neq 0,~ \forall~ n\in \Gamma^*_0\}. 
\end{array}
\end{eqnarray*}
 The definition of $\Gamma^*_0$ means $  \Z^*_{:\Gamma^*_0} \leq {\bf0}$, which by $\J_*\subseteq {\M}\times\Gamma_0^*$  derives
\begin{eqnarray}\label{Z-J*-J-} 
\Z^*_{\J_*} = {\bf0}, ~~ \Z^*_{\J_-} < {\bf0}.
\end{eqnarray} 

\underline{Necessity.} Since  $\bfx^*$  is a \ts\   of (\ref{SCP}), there is a $\W ^* $ satisfying   $\Z^*  \in {\bpi_\S}( \L^*)$ from \eqref{eta-point}, which together with Proposition \ref{pro-eta} suffices to $\T(\L^*;s)=\{\Gamma_0^*\}$.   Therefore, $\Gamma_0^*\in \T(\L^*;s)$. If $\|\Z^*\|_0^+<s$, then $\W^*={\bf0}$ by Proposition \ref{pro-eta}, which immediately shows $\J_*=U_{\Gamma_0^*}$ and \eqref{sta-eq-1}. Next, we prove the conclusion for case $\|\Z^*\|_0^+=s.$ It follows from \eqref{uPu-equ} that
\begin{eqnarray}\label{uPu-equ-*}
\W ^*_{:\overline \Gamma_0^*}={\bf0} ,~~ {\bf0}\geq \Z ^*_{:\Gamma_0^*} \perp \W ^*_{:\Gamma_0^*} \geq{\bf0}.
\end{eqnarray}
To show $\J_*=U_{\Gamma_0^*}$, we only need to show $Z_{mn}^*=0 \Leftrightarrow (Z^*+\tau  W ^*)_{mn}\geq 0$ for any $ n\in\Gamma_0^*$. This is clearly true due to the second condition in \eqref{uPu-equ-*}. The conditions in  \eqref{uPu-equ-*}  and \eqref{Z-J*-J-} indicate  $\W_{\J_-}^*={\bf0}$, thereby resulting in  $$\W_{\overline \J_*}^*=\W_{ \J_-\cup( {\M}\times \overline \Gamma_0^*)}^*={\bf0}.$$  This leads to the following condition and hence displays \eqref{sta-eq-1},
\begin{eqnarray*}  \eqspace{1.35}
\begin{array}{lll}
{ {\bf0}\overset{\eqref{eta-point} }{=} \varphi(\bfw^*;\M\times\N)  =\varphi(\bfw^*;\J_* ).}
\end{array} 
\end{eqnarray*} 

\underline{Sufficiency.} We aim  to prove \eqref{eta-point}. Condition $\varphi(\bfw^*;\M\times\N) ={\bf0}$ follows from the first and third equations in \eqref{sta-eq-1} immediately. We next show  $\Z^*\in{\bpi_\S}(\L^*)$  in \eqref{eta-point}. Condition $\J_*=U_{\Gamma_0^*}$ implies that
 \begin{eqnarray}  \label{gamma-T-*-Z-W}
\Z_{:\Gamma_0^*}^*={\bf0}, ~~~ \W ^* _{:\Gamma_0^*}\geq {\bf0},  
\end{eqnarray}
 which together with $\Gamma_0^*  \in \T(\L^*;s)$ and \eqref{psz} derives 
\begin{eqnarray}\label{eqp3} 
\eqspace{1.35} 
\Big[ (\L^*)^-_{ :\Gamma_0^*}~
( \L^*)_{:\overline\Gamma_0^*} 
\Big] \in  {\bpi_\S}(\L^*).
\end{eqnarray}
We finally show the left-hand side of \eqref{eqp3} is  $\Z^*$. Since $\J_*\subseteq {\M}\times\Gamma_0^*$, we have ${\M} \times \overline\Gamma_0^*   \subseteq \overline \J_*$, thereby leading to $\W ^*_{:\overline\Gamma_0^*}={\bf0}$ from \eqref{sta-eq-1}. Hence, 
\begin{eqnarray} \label{eqp3-1} ( \L^*)_{:\overline\Gamma_0^*} = (\Z^* +\tau  \W ^*)_{:\overline\Gamma_0^*} =\Z^* _{:\overline\Gamma_0^*}.\end{eqnarray}  
Condition \eqref{sta-eq-1} means that $W ^*_{\J_-} ={\bf0}$ due to $  \J_-\subseteq\overline \J_*$. As a result,
\begin{eqnarray*}   \eqspace{1.35}
\begin{array}{cccccccc}
\L^*_{\J_*} &=& (\Z^* +\tau  \W ^*)_{\J_*} &\overset{\eqref{Z-J*-J-} }{=}& \tau \W_{\J_*}^* &\overset{\eqref{gamma-T-*-Z-W}}{\geq}&  {\bf0},\\
\L^*_{\J_-} &=& (\Z^* +\tau  \W ^*)_{\J_-} &= &  \Z^*_{\J_-} &\overset{\eqref{Z-J*-J-} }{<}&{\bf0}.  
\end{array} 
\end{eqnarray*} 
Using the above conditions and \eqref {Z-J*-J-} enables us to show $(\L^* _{\J_*})^- = \Z^*_{\J_*}$ and $(\L^* _{\J_-})^- = \Z^*_{\J_-}$,  which combining $\J_-\cup \J_*=({\M}\times \Gamma_0^*)$ and conditions \eqref{eqp3} and \eqref{eqp3-1} proves $\Z^*\in  {\bpi_\S}(\L^*)$.   
\Halmos\endproof
 
\subsection{Algorithmic design}  
We note that equation $ \F(\bfw^*;\J_*)=0$ in \eqref{sta-eq-1} involves an unknown set $\J_*$. Therefore, to proceed with the semismooth Newton method \cite{kojima1986extension,qi1993nonsmooth}, we have to find $\J_*$, which will be adaptively updated by using the approximation of $\bfw^*$. More precisely,  let $\bfw^\ell$ be the current point, we first select  
\begin{align}\label{choice-of-H}
\begin{array}{l}
T_{\ell}\in\T(\L ^{\ell} ;s)~~\text{and} {~~\H^{\ell}\in  \partial \varphi(\bfw ^{\ell};U_{T_{\ell}} )},\end{array}
 \end{align}
 based on which we  find Newton direction $\bfd^{\ell}\in\R^{K+MN}$ by solving the following linear equations:
 \beq \label{newton-dir-0}
 \nabla \F_{\mu_\ell}(\bfw^{\ell};U_{T_{\ell}})~\bfd^{\ell} = - \F(\bfw^{\ell};U_{T_{\ell}}),
 \eeq
 where $\nabla \F_{\mu}(\bfw;{\J})$ is defined as \eqref{F-mu} and  $\mu_\ell$ is updated by
 \beq \label{update-mu}
\mu_\ell=\min\{\nu\mu_{\ell-1}, \rho\|\F(\bfw^{\ell};U_{T_{\ell}})\|\},
 \eeq
with $\nu\in(0,1)$ and $\rho>0$. The framework of our proposed method is presented in Algorithm \ref{Alg-NL01}.

\begin{algorithm}
	\caption{\nscp: Semismooth Newton method for (\ref{SCP})}
	\begin{algorithmic}[1] \label{Alg-NL01}
		\STATE Initialize $\bfw^0{=}(\bfx^0;{\rm vec}(\W)^0)$ positive parameters $ \texttt{maxIt}, ~\texttt{tol}, ~\rho, ~\tau, ~\underline\mu, ~\gamma$, and $\nu, \pi \in (0,1)$. 
		\STATE {Select $T_0\in\T(\L ^0;s)$ and $\H^0\in  \partial \varphi(\bfw ^0;U_{T_0} )$}, compute $\mu_0{=}\min\{\underline\mu, \rho\|\F(\bfw^{0};U_{T_0})\|\}$, and set $\ell{=}0$.
\IF{ $\ell\leq\texttt{maxIt}$ and $\|\F(\bfw^{\ell};U_{T_{\ell}})\|\geq\texttt{tol}$}
\STATE ~~~ {\bf if} \eqref{newton-dir-0} is solvable {\bf then}		
\STATE ~~~ \qquad Update $\bfd^{\ell}$ by solving \eqref{newton-dir-0}.
\STATE ~~~ {\bf else} 
\STATE ~~~ \qquad Update $\bfd^{\ell}=- \F(\bfw^{\ell};U_{T_{\ell}}) $.  
\STATE ~~~ {\bf endif} 	
	\STATE ~~~ Find the minimal integer $t_\ell\in\{0,1,2,\ldots \}$ such that
	\begin{eqnarray}  \label{gradually-feasible}
\|\G(\bfx^{\ell} + \pi^{t_\ell}\bfd^{\ell}_\bfx)\|_0^+\leq (\gamma+1) s,
\end{eqnarray}  
~~~ where $\bfd^{\ell}_\bfx$ is the subvector formed by the first $K$ entries in $\bfd^{\ell}$.
		\STATE ~~~ Update  $ \bfw^{\ell+1}~=~\bfw^{\ell}+\pi^{t_\ell}\bfd^{\ell}$.
		\STATE ~~~ {Update $U_{T_{\ell+1}}\in\T(\L ^{\ell+1} ;s)$ and $\H^{\ell+1}\in  \partial \varphi(\bfw ^{\ell+1};U_{T_{\ell+1}} )$.}
		\STATE  ~~~ Update $\mu_{\ell+1}$ by	\eqref{update-mu} and set $\ell = \ell+1$.   		
\ENDIF		
\RETURN $\bfw^{\ell}$.
	\end{algorithmic}
\end{algorithm}

\begin{remark}\label{rem:complexity}
Regarding  Algorithm \ref{Alg-NL01}, we have some observations.
\begin{itemize}[leftmargin=17pt]
\item[i)] One of the halting conditions makes use of $\|\F(\bfw^{\ell};U_{T_{\ell}})\|$. The reason behind this is that if   point $\bfw^\ell$ satisfies $\|\F(\bfw^{\ell};U_{T_{\ell}})\|=0$, then it is a \ts\ of  (\ref{SCP}) by Theorem \ref{sta-eq}.
\item[ii)] Recalling (\ref{sta-eq-1}), we are expected to update $\bfd^{\ell}$ by solving
\beq \label{newton-dir-0-0}
 \nabla \F (\bfw^{\ell};U_{T_{\ell}})~\bfd^{\ell} = - \F(\bfw^{\ell};U_{T_{\ell}}),
 \eeq
 instead of (\ref{newton-dir-0}). However,  the major concern is made on the existence of $\bfd^{\ell}$ by solving (\ref{newton-dir-0-0}). To overcome such a drawback, we add a smoothing term $-\mu_{\ell}{\bf I}_{|\J|}$ to increase the possibility of the non-singularity of  $\nabla \F_{\mu_{\ell}}(\bfw^{\ell};U_{T_{\ell}})$.  This idea has been adopted in literature, e.g., \cite{chen1998global,zhou2021quadratic}
\item[iii)]  When the algorithm derives a direction $\bfd^{\ell}$, we use condition \eqref{gradually-feasible} to decide the step size. This condition allows the next point to be chosen in a larger region to some extent by setting $\gamma>0$. However, it can ensure that the next point does not step far away from the feasible region by setting a small value of $\gamma$ (e.g., $\gamma=2/s$). In this way, the algorithm performs relatively steadily. In addition, we will show that if the starting point is chosen close to a stationary point, condition \eqref{gradually-feasible} can be always satisfied with $\pi^{t_\ell}=1$ from Theorem \ref{the:quadratic}.
\item[iv)]  Finally, when projecting a point onto $\Omega$, we expect its explicit form for numerical computing. Examples of such $\Omega$ include the unit ball, box, non-negative orthant, specific affine subspaces, and so on. However, in the next subsection, we will show that the closed form of the projection is unnecessary to establish the local convergence rate of the proposed algorithm. 
\end{itemize}
\end{remark}
\subsection{Local convergence rate}
 Given a \ts\ $\bfx^*$ of \eqref{SCP}, there is $\W^*$ satisfying condition (\ref{eta-point}). Hereafter, we always denote $\bfw^*:=(\bfx^*; \ve(\W^*))$ and  $\bfw:=(\bfx; \ve(\W))$. 
\begin{lemma}\label{lemma-neighbour} Let $\bfx^*$ be a \ts\ with $0<\tau<\tau_*$ of (\ref{SCP}). Then there is a neighbourhood ${\mathbb N}^*$ of $\bfw^*$ such that,
 \beq\label{gw*-0} \F(\bfw^*;U_T)={\bf 0}~~\text{and}~~U_T\subseteq\J_*,~~\forall \bfw \in {\mathbb N}^*,~\forall~T\in\T(\L ;s). \eeq
 \end{lemma}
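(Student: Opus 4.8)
The plan is to prove that, close to $\bfw^*$, any index set $U_T$ associated with a choice $T\in\T(\L;s)$ is sandwiched between $\mathrm{supp}(\W^*)$ and $\J_*$, and that $\F(\bfw^*;\cdot)$ annihilates every set caught in that sandwich. First I would extract from Theorem \ref{sta-eq} (and its proof via Proposition \ref{pro-eta}) that $\tau$-stationarity of $\bfw^*$ gives $\T(\L^*;s)=\{\Gamma_0^*\}$, $U_{\Gamma_0^*}=\J_*$, $\Z^*_{\J_*}=0$, $\W^*_{\overline{\J_*}}=0$ and $\F(\bfw^*;\J_*)=0$, so in particular $\mathrm{supp}(\W^*)\subseteq\J_*\subseteq\M\times\Gamma_0^*$. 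Inspecting the three blocks of \eqref{equation-F-mu}, one sees at once that $\mathrm{supp}(\W^*)\subseteq\J\subseteq\J_*$ implies $\W^*_{\overline\J}=0$, $\Z^*_\J=0$, and $\sum_\J W^*_{mn}\nabla G_{mn}(\bfx^*)=\sum_{\J_*}W^*_{mn}\nabla G_{mn}(\bfx^*)=-\nabla f(\bfx^*)$, hence $\F(\bfw^*;\J)=0$. Thus it suffices to produce $\epsilon_*>0$ with $\mathrm{supp}(\W^*)\subseteq U_T\subseteq\J_*$ for all $\bfw$ satisfying $\|\bfw-\bfw^*\|<\epsilon_*$ and all $T\in\T(\L;s)$; then $\mathbb N^*$ is that ball.

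Next I would set up a gap estimate controlling $\T(\L;s)$. With $\Z^*=\G(\bfx^*)$ and $\N=\Gamma_+^*\cup\Gamma_0^*\cup\Gamma_-^*$, the $\tau$-stationary structure yields $\L^*_{:n}=\Z^*_{:n}$ for $n\notin\Gamma_0^*$, and for $n\in\Gamma_0^*$, $\L^*_{mn}=\tau W^*_{mn}\ge0$ if $(m,n)\in\J_*$ and $\L^*_{mn}=Z^*_{mn}<0$ otherwise; hence $\L^{\max}_{:n}>0$ on $\Gamma_+^*$, $\L^{\max}_{:n}<0$ on $\Gamma_-^*$, $\|(\L^*_{:n})^+\|=\|(\Z^*_{:n})^+\|\ge(\Z^*)^{\downarrow}_s$ on $\Gamma_+^*$, and $\|(\L^*_{:n})^+\|=\tau\|\W^*_{:n}\|$ on $\Gamma_0^*$. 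When $\|\Z^*\|^+_0=s$ the strict inequality $\tau<\tau_*$ gives $\tau\|\W^*_{:n}\|\le\tau r_*<(\Z^*)^{\downarrow}_s$, so there is a uniform gap $\eta:=(\Z^*)^{\downarrow}_s-\tau r_*>0$ between the $\|(\cdot)^+\|$-values of the $\Gamma_+^*$- and $\Gamma_0^*$-columns of $\L^*$ (when $\|\Z^*\|^+_0<s$ one simply has $\W^*=0$ and those $\Gamma_0^*$-values are $0$). By continuity of $\bfw\mapsto\bigl(\L^{\max}_{:n},\|(\L_{:n})^+\|\bigr)$ I would choose $\epsilon_*$ so small that for $\|\bfw-\bfw^*\|<\epsilon_*$ the $\Gamma_+^*$-columns keep $\L^{\max}_{:n}>0$ and $\|(\L_{:n})^+\|$ strictly above every $\Gamma_0^*$-column's value, and the $\Gamma_-^*$-columns keep $\L^{\max}_{:n}<0$. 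Then $\Gamma_+^*\subseteq\Gamma'_+(\L)$, $\Gamma_-^*\subseteq\Gamma'_-(\L)$, and the top-$r$ selection ($r=\min\{s,|\Gamma'_+(\L)|\}$) is forced to pick $\Gamma_+^*$ first; since moreover $\Gamma'_+(\L)\subseteq\Gamma_+^*\cup\Gamma_0^*$ and $\Gamma'_0(\L)\subseteq\Gamma_0^*$, every $T=(\Gamma'_+(\L)\setminus\Gamma'_s(\L))\cup\Gamma'_0(\L)\in\T(\L;s)$ satisfies $T\subseteq\Gamma_0^*$ (and when $\|\Z^*\|^+_0=s$, $\Gamma'_s(\L)=\Gamma_+^*$ is the unique choice).

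Finally I would pin down $U_T$ by shrinking $\epsilon_*$ once more (only finitely many entries matter) so that $\Lambda_{mn}<0$ for every $(m,n)\notin\J_*$ with $n\in\Gamma_0^*$ (legitimate since there $\L^*_{mn}=Z^*_{mn}<0$) and $\Lambda_{mn}>0$ for every $(m,n)\in\mathrm{supp}(\W^*)$ (legitimate since there $\L^*_{mn}=\tau W^*_{mn}>0$). For $\bfw$ in the resulting ball and $T\in\T(\L;s)$: from $T\subseteq\Gamma_0^*$ we get $U_T\subseteq\M\times\Gamma_0^*$, and any $(m,n)\in U_T$ has $\Lambda_{mn}\ge0$, which forces $(m,n)\in\J_*$; hence $U_T\subseteq\J_*$. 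For the reverse, $(m,n)\in\mathrm{supp}(\W^*)\subseteq\M\times\Gamma_0^*$ has $\Lambda_{mn}>0$, so $n\in\Gamma'_+(\L)$, and since $n\in\Gamma_0^*$ is disjoint from $\Gamma_+^*=\Gamma'_s(\L)$ (the case $\|\Z^*\|^+_0<s$ being vacuous, as then $\mathrm{supp}(\W^*)=\emptyset$) we obtain $n\in\Gamma'_+(\L)\setminus\Gamma'_s(\L)\subseteq T$, whence $(m,n)\in U_T$. Thus $\mathrm{supp}(\W^*)\subseteq U_T\subseteq\J_*$ and the reduction of the first paragraph gives $\F(\bfw^*;U_T)=0$.

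The hard part is the stability claim in the second paragraph: a small perturbation of $\bfw^*$ must not be allowed to promote a $\Gamma_0^*$-column past a $\Gamma_+^*$-column in the top-$s$ competition that defines $\T(\L;s)$, for otherwise $T$ (and hence $U_T$) could escape $\Gamma_0^*$. This is precisely where the strict inequality $\tau<\tau_*$ is used --- at $\tau=\tau_*$ the gap $\eta$ collapses --- and it is also the reason the lemma is stated with $0<\tau<\tau_*$ rather than $\tau\le\tau_*$. Everything else is routine continuity and finiteness bookkeeping.
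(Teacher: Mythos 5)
Your proof is correct and takes essentially the same route as the paper's: it reduces the claim to showing $\mathrm{supp}(\W^*)\subseteq U_T\subseteq \J_*$ for all $\bfw$ near $\bfw^*$ (equivalently $U_T\subseteq\J_*$ and $\W^*_{\overline{U_T}}=0$), and then gets these inclusions from the sign/magnitude structure of $\L^*$ given by the $\tau$-stationarity conditions together with continuity of $\L$ in $\bfw$. The only real difference is that you make explicit the norm gap $(\Z^*)^{\downarrow}_s-\tau r_*>0$ (where $\tau<\tau_*$ enters) behind the inclusion $\Gamma_+^*\subseteq\Gamma_s$, a step the paper asserts more tersely from the sign conditions alone.
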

 To establish the local convergence performance, we need the following assumptions.
\begin{assumption}\label{ass-1} 
Suppose that $\varphi(\cdot;\J_*)$ is semismooth at $\bfw^*:=(\bfx^*; \ve(\W^*))$  and 
\begin{eqnarray}\label{second-order-cond}
 \left[  \eqspace{1.25}
     \begin{array}{ccc}
       \P^* & ~~& \Q^*\\
    \nabla_{ \J} \G(\bfx^*)^\top&~~&{\bf0}
     \end{array}
     \right],
  \end{eqnarray}
is nonsingular for any $\J\subseteq\J_*$ and any $\H^*=[  \P^*~\Q^*~{\bf0}]\in  \partial \varphi(\bfw^*;\J) $. 
\end{assumption}
These assumptions are related to the regularity conditions \cite{qi1993nonsmooth, robinson1980strongly, dontchev2010newton, zhou2021quadratic} usually used to achieve the convergence results for Newton-type methods. 
  \begin{lemma}\label{lemma-semismooth} Let $\bfw^*$ be a \ts\ with $0<\tau<\tau_*$ of (\ref{SCP}). If  $\varphi(\cdot;\J_*)$ is semismooth at $\bfw^*$, then so is $\F(\cdot;\J)$ for any $\J\subseteq\J_*$. If  $\varphi(\cdot;\J_*)$ and $G_{mn}, (m,n)\in\J_*$ are strongly semismooth at $\bfw^*$, then so is $\F(\cdot;\J)$ for any $\J\subseteq\J_*$.
 \end{lemma}
 \proof{Proof} Since $G_{mn}$ for any $(m,n)$ are continuously differentiable, they are semismooth everywhere on $\R^K$.  Then the results follow from the definition of $\F$ in \eqref{equation-F-mu} and  \cite[Proposition 1.73]{izmailov2014newton}.
\Halmos\endproof
\begin{remark} We have the following comments on the assumptions in Lemma \ref{lemma-semismooth}.   As projection $\Pi_{\Omega}$ is semismooth,  $\varphi(\cdot;\J_*)$ is semismooth at $\bfx^*$ if $\nabla f$ and $\nabla G_{mn}, (m,n)\in\J_*$ are semismooth based on \cite[Proposition 1.74]{izmailov2014newton}. In this regard, the assumption on the semismoothness of $\varphi(\cdot;\J_*)$ is mild. Moreover, let ${\bf z}^*:=\bfx^*-\tau[\nabla f(\bfx^*)+ \sum_{ \J_* }W^*_{mn}   \nabla  G_{mn}(\bfx^*)]$, if  $\nabla f$ and $\nabla G_{mn}, (m,n)\in\J_*$ are strongly semismooth at $\bfx^*$, and $\Pi_{\Omega}$ is strongly  semismooth at ${\bf z}^*$ (e.g., when  $\Omega$ is a symmetric cone or polyhedral set, or ${\bf z}^*$ belongs to the interior of $\Omega$),  then $\varphi(\cdot;\J_*)$  is  strongly semismooth at $\bfx^*$, resulting in the strongly semismoothness of $\F(\cdot;\J)$ at $\bfx^*$ for any $\J\subseteq\J_*$ by \cite[Proposition 1.74]{izmailov2014newton}. 
\end{remark}
 \begin{lemma}\label{lemma-neighbour-bd}  If Assumptions \ref{ass-1} holds, then there is $\mu_*>0$ and a neighbourhood $\mathbb{N}^*$ of $\bfw^*$ such that given any $\mu\in[0, \mu_*)$ and $\J\subseteq\J_*$,  matrix
  \begin{eqnarray*}
 \nabla \F_{\mu}(\bfw;{\J})=\left[  \eqspace{1.25}
     \begin{array}{ccccc}
       \P &~~&   \Q&~~&{\bf0}\\
    \nabla_{ \J} \G(\bfx)^\top&&-\mu {\bf I}_{|  \J|}&~~&{\bf0}\\
     {\bf0}&&{\bf0}&~~&{\bf I}_{|\overline \J|}\\
     \end{array}
     \right]
  \end{eqnarray*}
is non-singular. Moreover, both $\|(\nabla \F_\mu(\bfw;\J))^{-1}\|$ and $\|\nabla \F_\mu(\bfw;\J)\|$ are bounded for any $\H=[  \P~\Q~{\bf0}]\in  \partial \varphi(\bfw;\J)$ and any $\bfw\in \mathbb{N}^*$. 
\end{lemma}
\proof{Proof} We first claim that for any fixed $\J\subseteq\J_*$,
  \begin{eqnarray*}
{\bf\Phi}(\bfw):=\left[  \eqspace{1.25}
     \begin{array}{ccc}
       \P &~~&   \Q\\
  \nabla_{ \J} \G(\bfx)^\top&& {\bf 0}\\
     \end{array}
     \right],
  \end{eqnarray*}
is non-singular  for any $\H=[  \P~\Q~{\bf0}]\in  \partial \varphi(\bfw;\J)$ and any $\bfw\in \mathbb{N}^*$.  Suppose this is not true, then there is a sequence $\bfw^k\to\bfw^*$, $\H^k=[  \P^k~\Q^k~{\bf0}]\in  \partial \varphi(\bfw^k;\J)$,  all $\{{\bf\Phi}(\bfw^k)\}$ are singular. Since $\varphi(\cdot;\J)$  is semismooth around $\bfw^*$,  it is locally Lipschitz continuous. This implies that $\partial \varphi(\cdot;\J)$ in bounded in  a neighbourhood $\mathbb{N}^*$ of $\bfw^*$. Therefore, sequence $\{\H^k\}$  is bounded. By passing to a subsequence, we may assume $  \H^k \to \H^\infty=[  \P^\infty~\Q^\infty~{\bf0}]$. By the closedness of the generalized Jacobian, we have $\H^\infty\in\partial \varphi(\bfw^*;\J)$, which together with $\nabla_{ \J} \G(\bfx^k)\to\nabla_{ \J} \G(\bfx^*)$ means that 
  \begin{eqnarray*}
{\bf\Phi}(\bfw^k)\to \left[  \eqspace{1.25}
     \begin{array}{ccc}
       \P^\infty &~~&   \Q^\infty\\
     \nabla_{ \J} \G(\bfx^*)^\top&& {\bf 0}\\
     \end{array}
     \right]=:{\bf\Phi}(\bfw^*)
  \end{eqnarray*}
and thus ${\bf\Phi}(\bfw^*)$ is singular, contradicting with Assumption  \ref{ass-1}. Now for fixed $\J$, let $\sigma(\J)$ be the smallest singular value of ${\bf\Phi}(\bfw)$ over all $\H\in  \partial \varphi(\bfw;\J)$ and all $\bfw\in \mathbb{N}^*$. The above claim indicates $ \sigma(\J)>0$. Then for any $\mu\in[0,\sigma(\J))$, the following matrix
  \begin{eqnarray*}
\left[  \eqspace{1.25}
     \begin{array}{ccc}
       \P &~~&   \Q\\
     \nabla_{ \J} \G(\bfx)^\top&& -\mu {\bf I}_{|  \J|}\\
     \end{array}
     \right]=\left[  \eqspace{1.25}
     \begin{array}{ccc}
       \P &~~&   \Q\\
     \nabla_{ \J} \G(\bfx)^\top&&  {\bf 0}\\
     \end{array}
     \right]+\left[  \eqspace{1.25}
     \begin{array}{ccc}
      {\bf 0} &~~&  {\bf 0}\\
     {\bf 0 }&& -\mu {\bf I}_{|  \J|}\\
     \end{array}
     \right]
  \end{eqnarray*}
  is still nonsingular, so is matrix $\nabla \F_{\mu}(\bfw;{\J})$. Let $\mu_*:=\min_{\J\subseteq\J_*}\sigma(\J)$. As the choices of $\J\subseteq\J_*$  are finitely many, $\mu_*>0$ is well defined. Then we can conclude that for any $\mu\in[0, \mu_*)$ and any fixed $\J\subseteq\J_*$,  matrix $\nabla \F_{\mu}(\bfw;{\J})$ is non-singular for any $\H\in  \partial \varphi(\bfw;\J)$ and any $\bfw\in \mathbb{N}^*$.  Finally, one can observe that the smallest singular value of $\nabla \F_{\mu}(\bfw;{\J})$  is no less than $ \min\{1, \sigma(\J)-\mu\}$ for each fixed $\J$. Hence for all $\J\subseteq\J_*$, the smallest singular value is no less than  $\min\{1, \mu_*-\mu\}$, this implies  $\|(\nabla \F_\mu(\bfw;\J))^{-1}\|\leq 1/\min\{1, \mu_*-\mu\}$, namely, it is bounded. The boundedness of $\|\nabla \F_\mu(\bfw;\J)\|$ follows from the boundedness of $\partial \varphi(\bfw;\J)$  and $ \nabla_{ \J} \G(\bfx)$ for any $\bfw\in \mathbb{N}^*$. \Halmos\endproof
We are ready to establish the local convergence rate of the proposed smooth Newton method, \nscp, before which we point out that the establishment is not trivial, because differing from the standard system of equations, the $\tau$-stationary equations, \eqref {sta-eq-1}, involve an unknown set $\J_*=U_{ \Gamma_0^*}$. If this set is available in advance, then one can follow the standard way for the Newton-type methods to build the local convergence rate. However, set $U_{ T_\ell}$ may change from one iteration to another. A different set leads to a different
system of equations $\F(\bfw; U_{ T_\ell}) = {\bf0}$. Hence, in each step, the algorithm finds a Newton direction for a
different system of equations instead of a fixed system. This is where the standard proof for quadratic
convergence fails to fit our case. 
\begin{theorem}[Local convergence rate]\label{the:quadratic} Suppose Assumption \ref{ass-1} holds and set $\gamma\geq|\Gamma^*_0|/s$. Then in a neighborhood of $\bfw^*$, Algorithm \ref{Alg-NL01} admits full Newton steps and converges to $\bfw^*$ superlinearly.  If further assume $\varphi(\cdot;\J)$  and $G_{mn}, (m,n)\in\J_*$ are strongly semismooth at $\bfw^*$, then the convergence is quadratic.
\end{theorem}
 \proof{Proof}  
Consider a neighbourhood  ${\mathbb N}^*(\bfw^*,\eta_*)$ with a sufficiently small radius $\eta_*>0$. For $\bfw^\ell\in {\mathbb N}^*(\bfw^*,\eta_*)$, we have $\J_{\ell} := U_{T_{\ell}}\subseteq\J_*$ due to  \eqref{gw*-0}, where $T_\ell\in\T_\tau(\L ^\ell;s)$. By Lemma \ref{lemma-neighbour}, we have
\beq\label{gw*-bd-h}\F (\bfw^{*};\J_\ell) \overset{\eqref{gw*-0}}{=}{\bf 0}.
\eeq
which indicates that $\F (\bfw^\ell;\J_\ell)$ is near zero as $\bfw^\ell$ close to $\bfw^*$. This is because $\F (\cdot;\J_{\ell})$ is locally Lipschitz continuous around $\bfx^*$ for given $\J_{\ell}$. Now from Lemma \ref{lemma-neighbour-bd}, ${\bf\Phi}_{\mu_\ell}^\ell:=\nabla \F_{\mu_\ell} (\bfw^{\ell};\J_\ell)$ is non-singular and $\|({\bf\Phi}_{\mu_\ell}^\ell)^{-1}\|$ is bounded, and thus \eqref{newton-dir-0} is solvable, namely, $\bfd^{{\ell}}=-({\bf\Phi}_{\mu_\ell}^\ell)^{-1} \F (\bfw^\ell;\J_\ell) $.  
 Therefore, $\bfd^{{\ell}}$  is close to zero. This allows us to derive the following conditions
 \begin{eqnarray*} 
  \eqspace{1.35}
     \begin{array}{rll}
\forall~n\in\Gamma_+^*:~~(\G(\bfx^*))_{:n}^{\max} >0~~\Longrightarrow ~~(\G(\bfx^{\ell} +  \bfd^{\ell}_\bfx))_{:n}^{\max}>0,\\
\forall~n\in\Gamma_-^*:~~(\G(\bfx^*))_{:n}^{\max} <0~~\Longrightarrow ~~(\G(\bfx^{\ell} +  \bfd^{\ell}_\bfx))_{:n}^{\max}<0,
     \end{array} 
  \end{eqnarray*}
which further result in
 \begin{eqnarray*} 
  \eqspace{1.35}
     \begin{array}{rll}
     \|\G(\bfx^{0} +  \bfd^{0}_\bfx)\|^+_0  \leq |\Gamma_+^*| + |\Gamma_0^*| 
      =  \|\G(\bfx^*)\|^+_0 + |\Gamma_0^*| 
      \leq  s + \gamma s. 
     \end{array} 
  \end{eqnarray*}
The above condition indicates \eqref{gradually-feasible} is satisfied with $ \pi^{t_\ell}=1$. Hence, the full Newton is admitted, namely, $\bfw^{\ell+1} = \bfw^{\ell} +  \bfd^{\ell}.$ 
  By the definition of ${\bf\Phi}_{\mu_\ell}^\ell$ and \eqref{sta-eq-2}, it follows ${\bf\Phi}_{0}^\ell= \nabla \F (\bfw^{\ell};\J_\ell) $ and
   \begin{eqnarray}\label{super-rate}
\eqspace{1.35}
\begin{array}{lcl}
&&\|\bfw^{\ell+1}-\bfw^{*}\|  =\|\bfw^{\ell}+\bfd^{\ell}-\bfw^{*}\| \\
& = & \|({\bf\Phi}_{\mu_\ell}^\ell)^{-1} \cdot {\bf\Phi}_{\mu_\ell}^\ell \cdot(\bfw^{\ell}+\bfd^{\ell}-\bfw^{*})\|\\
  &\overset{\eqref{newton-dir-0}}{=} &  \|({\bf\Phi}_{\mu_\ell}^\ell)^{-1} \cdot[{\bf\Phi}_{\mu_\ell}^\ell \cdot(\bfw^{\ell} -\bfw^{*}) - \F (\bfw^{\ell};\J_\ell)]\|\\
   &\overset{\eqref{gw*-0}}{=} &  \|({\bf\Phi}_{\mu_\ell}^\ell)^{-1}  \cdot[\F (\bfw^{\ell};\J_\ell) - \F (\bfw^*;\J_\ell)  - {\bf\Phi}_{\mu_\ell}^\ell \cdot(\bfw^{\ell} -\bfw^{*})]\|\\
  &=&  \|({\bf\Phi}_{\mu_\ell}^\ell)^{-1}  \cdot[\F (\bfw^{\ell};\J_\ell) - \F (\bfw^*;\J_\ell)  - {\bf\Phi}_{0}^\ell \cdot(\bfw^{\ell} -\bfw^{*}) + ({\bf\Phi}_{0}^\ell -{\bf\Phi}_{\mu_\ell}^\ell ) \cdot(\bfw^{\ell} -\bfw^{*})]\|\\
 & =& o(\|\bfw^{\ell}-\bfw^{*}\|),
\end{array} 
 \end{eqnarray}
 where the last equation holds due to three facts:   semismoothness of $\F (\cdot;\J_\ell)$ at $\bfw^*$ for a fixed $\J_{\ell}$, boundedness of $({\bf\Phi}_{\mu_\ell}^\ell)^{-1}$, and 
    \begin{eqnarray}\label{mu-w-w}
\eqspace{1.35}
\begin{array}{lcl}
\|({\bf\Phi}_{0}^\ell -{\bf\Phi}_{\mu_\ell}^\ell ) \cdot(\bfw^{\ell} -\bfw^{*})\|= \mu_\ell \|\W^{\ell}_{\J_{\ell}}-\W^{*}_{\J_{\ell}}\| \leq   \mu_\ell\|\bfw^{\ell}-\bfw^{*}\| =  o(\|\bfw^{\ell}-\bfw^{*}\|).
\end{array} 
 \end{eqnarray}
 Here the first equation is from definition \eqref{F-mu} of $\nabla \F_{\mu} (\bfw;\J)$. Equation \eqref{super-rate} shows the superlinear rate. To see the quadratic rate, from \eqref{mu-w-w}, we obtain
  \begin{eqnarray*} 
\eqspace{1.35}
\begin{array}{lcl}
\|({\bf\Phi}_{0}^\ell -{\bf\Phi}_{\mu_\ell}^\ell ) \cdot(\bfw^{\ell} -\bfw^{*})\|  &\leq&   \mu_\ell \|\bfw^{\ell}-\bfw^{*}\| \\
&\overset{\eqref{update-mu}}{\leq} &  \rho \|\F (\bfw^\ell;\J_\ell)\|  \cdot\|\bfw^{\ell}-\bfw^{*}\|  \\
&\overset{\eqref{newton-dir-0}}{=} &  \rho \|{\bf\Phi}_{\mu_\ell}^\ell\cdot \bfd^{{\ell}}\| \cdot \|\bfw^{\ell}-\bfw^{*}\|  \\ 
&\leq&  \rho \|{\bf\Phi}_{\mu_\ell}^\ell\|\cdot \|\bfw^{\ell+1}-\bfw^{\ell}\| \cdot \|\bfw^{\ell}-\bfw^{*}\|  \\ 
&\leq&  \rho \|{\bf\Phi}_{\mu_\ell}^\ell\|\cdot (\|\bfw^{\ell+1}-\bfw^{*}\|+ \|\bfw^{\ell}-\bfw^{*}\|) \cdot \|\bfw^{\ell}-\bfw^{*}\| \\
&\leq& 1/(2\|({\bf\Phi}_{\mu_\ell}^\ell)^{-1}\|)  \|\bfw^{\ell+1}-\bfw^{*}\|+ \rho \|{\bf\Phi}_{\mu_\ell}^\ell\|\cdot \|\bfw^{\ell}-\bfw^{*}\|^2,
\end{array} 
 \end{eqnarray*} 
 the last inequality is ensured for sufficiently small $\eta_*\leq 1/(2\rho \|{\bf\Phi}_{\mu_\ell}^\ell\|\|({\bf\Phi}_{\mu_\ell}^\ell)^{-1}\|)$ and $\bfw^\ell\in {\mathbb N}^*(\bfw^*,\eta_*)$. The above condition, strong semismoothness of $\F (\cdot;\J_\ell)$ at $\bfw^*$ for a fixed $\J_{\ell}$,  and \eqref{super-rate} deliver
 \begin{eqnarray*} 
\eqspace{1.35}
\begin{array}{lcl}
 \|\bfw^{\ell+1}-\bfw^{*}\| 
  &\leq &  O(\|\bfw^{\ell}-\bfw^{*}\|^2) + \| ({\bf\Phi}_{\mu_\ell}^\ell)^{-1}  \cdot ({\bf\Phi}_{0}^\ell -{\bf\Phi}_{\mu_\ell}^\ell ) \cdot(\bfw^{\ell} -\bfw^{*})]\|\\
  &\leq &  O(\|\bfw^{\ell}-\bfw^{*}\|^2) + (1/2 ) \|\bfw^{\ell+1}-\bfw^{*}\|+ \rho \|{\bf\Phi}_{\mu_\ell}^\ell\|\cdot \|\bfw^{\ell}-\bfw^{*}\|^2,
\end{array} 
 \end{eqnarray*}
 which suffices to $ \|\bfw^{\ell+1}-\bfw^{*}\|  = O(\|\bfw^{\ell}-\bfw^{*}\|^2)$, showing the quadratic convergence rate. 
  \Halmos\endproof
%
%

\section{Numerical Experiments}\label{sec:numerical}

In this section, we will conduct some numerical experiments of  \nscp\ ((available at \url{https://github.com/ShenglongZhou/SNSCO})) using MATLAB (R2023b) on a laptop of  $64$GB memory and Core i9.  

\subsection{Test example}\label{subsec:nop}
 We use the norm optimization problem described in \cite{hong2011sequential,adam2016nonlinear} to demonstrate the performance of the selected algorithms. The problem takes the following form,
\begin{equation}\label{NOP-orig}
\eqspace{1.35}
\begin{array}{cll}
\underset{\bfx\in \R^{K}}{\min}&&  -\langle{\bf 1}, \bfx\rangle,\\
{\rm s.t.}&& \mathbb{P}\Big\{\frac{1}{2} \langle \bfxi^m\circ\bfxi^m, \bfx\circ\bfx\rangle  \leq b, ~m\in\M\Big\}\geq 1-\alpha,~ \bfx\geq  {\bf0}, 
\end{array}\end{equation}
where    $\circ$ represents the Hadamard product,  ${\bf 1}$ is the vector with all entries being ones,  $b>0$, and $\bfxi^m=(\xi_{1}^m,\ldots,\xi_{K}^m)^\top\in\R^K$. 
However, in the sequel, we aim to solve the following problem,
\begin{equation}\label{NOP}
\eqspace{1.35}
\begin{array}{cll}
\underset{\bfx\in \R^{K}}{\min}&& -\langle{\bf 1}, \bfx\rangle+\frac{\lambda}{2}\|\bfx\|^2,\\
{\rm s.t.}&& \mathbb{P}\Big\{\frac{1}{2}\sum_{k=1}^K(\xi_{k}^m)^2x_k^2\leq b, ~m\in\M\Big\}\geq 1-\alpha,~ \bfx\geq  {\bf0}, 
\end{array}\end{equation} 
where $\lambda$ is a positive penalty scalar. The above two problems admit the same optimal solution if $\lambda$ is smaller than a threshold and   $\xi_{k}^m,m\in\M,k\in\K$ are independent and identically distributed (i.i.d.) standard normal random variables.  To see this, define
\begin{equation} \label{opt-solution} 
\eqspace{1.35}
\begin{array}{cll}
\bfx^{\rm opt}:=  c  {\bf 1}~~ \text{with}~~ c:=\Big[\frac{2b}{F_{\chi_K^2}^{-1}\left((1-\alpha)^{1/M}\right)}\Big]^{1/2},
\end{array}\end{equation}
 where $F_{\chi_K^2}^{-1}$ denotes the inverse distribution function of a chi-square distribution with $K$ degrees of freedom. Then it follows from \cite{hong2011sequential} that the optimal solution to problem \eqref{NOP-orig} is $\bfx^{\rm opt}$. Similar reasoning to \cite{hong2011sequential} allows us to derive that the optimal solution  to problem \eqref{NOP} is $\min \left\{  {1}/{\lambda}, c   \right\}{\bf 1}$, which is identical to $\bfx^{\rm opt}$ if $0<\lambda\leq 1/c$. Therefore, for simplicity, in the subsequent numerical experiments, we set $\lambda=1/{(2c)}$ and $b=5.$  

 Let $\{\bfxi_n^m:=(\xi_{n1}^m,\ldots,\xi_{nK}^m)^\top\in\R^K: n\in\N\}$ be $N$ realizations of $\bfxi^m$ and denote
\begin{equation*}
 T_+:=\{k\in\K: z_k>0\},\qquad T_0:=\{k\in\K: z_k=0\}, \qquad T_-:=\{k\in\K: z_k<0\},
\end{equation*}
where ${\bf z}:=\bfx-\tau[\nabla f(\bfx)+ \sum_{ \J }W_{mn}   \nabla  G_{mn}(\bfx)]$.  Define
\begin{eqnarray*}
{\bf \Theta}&:=&\left[
\begin{array}{rrrr} 
  \bfxi_{1}^{1}\circ\bfxi_{1}^{1}, \ldots, \bfxi_{1}^{M}\circ\bfxi_{1}^{M}, &\bfxi_{2}^{1}\circ\bfxi_{2}^{1}, \ldots, \bfxi_{2}^{M}\circ\bfxi_{2}^{M}, &\ldots, \bfxi_{N}^{1}\circ\bfxi_{N}^{1} , \ldots, \bfxi_{N}^{M}\circ\bfxi_{N}^{M}
\end{array} \right]^\top\in\R^{K\times MN},\\[1.5ex]
{\cal I}(\J)&:=&\{i\in\{1,2,\ldots,MN\}:~i=m+(n-1)M,~\forall~(m,n)\in\J \}.
\end{eqnarray*}
 As $G_{mn}(\bfx)=(1/2)\langle \bfxi_n^m \circ\bfxi_n^m, \bfx\circ\bfx \rangle -b$, one can obtain
\begin{eqnarray*}
 \nabla_{\J}\G(\bfx)= {\rm D}(\bfx){\bf \Theta}_{: {\cal I}(\J)}.
\end{eqnarray*} 
Moreover,  any $\H=[\P~\Q~{\bf 0}]\in\partial \varphi(\bfw,\J)$ takes the form of
\begin{eqnarray*}
\begin{array}{l}
\P=\left[
\begin{array}{ccc} 
\P_{T_+T_+} & {\bf 0}& {\bf 0}\\[1ex]
{\bf 0} & \P_{T_0T_0}({\bf t}) & {\bf 0}\\[1ex]
{\bf 0} & {\bf 0} &  {\bf I}_{|T_-|}\\ 
\end{array} 
\right],~
 \Q =  \left[
\begin{array}{r} 
 \tau  {\rm D}(\bfx_{T_+}){\bf \Theta}_{T_+{\cal I}(\J)} \\[1ex]
\tau  {\rm D}({\bf t}\circ\bfx_{T_0}){\bf \Theta}_{T_0{\cal I}(\J)} \\[1ex]
 {\bf 0}~\\ 
\end{array} \right],\\[5ex]
\P_{T_+T_+}=\tau{\rm D}\left( {\bf \Theta}_{T_+{\cal I}(\J)} {\rm vec}(\W_{\J})\right) ,\\[2ex]
\P_{T_0T_0}({\bf t})={\rm D}\left({\bf 1}-{\bf t}+\tau{\bf t}\circ [{\bf \Theta}_{T_0{\cal I}(\J)} {\rm vec}(\W_{\J}) ]+\lambda{\bf1}\right),
\end{array}\end{eqnarray*} 
where ${\bf t}\in[{\bf 0},{\bf 1}]\subset\R^{|T_0|}$. Based on these calculations, we have the following corollary.
\begin{corollary}\label{cor-norm-opt} Let $\bfw^*$  be a \ts\ of problem (\ref{SCP}) in the case of model (\ref{NOP}) and ${\bf z}^*:=\bfx^*+\tau[{\bf 1}-{\rm D}(\bfx^*){\bf \Theta}_{:{\cal I}(\J_*)}{\rm vec}(\W^*_{\J_*})]$ with $T^*_+:=\{k\in\K: z_k^*>0\}$.  Suppose ${\bf \Theta}_{T^*_+{\cal I}(\J_*)}$ is full column rank. Then in a neighborhood of $\bfw^*$, Assumption \ref{ass-1} holds and Algorithm \ref{Alg-NL01} converges to $\bfw^*$  quadratically.
\end{corollary}
\proof{Proof} As $f(\bfx)=-\langle{\bf 1}, \bfx\rangle+(\lambda/2)\|\bfx\|^2$, $G_{mn}(\bfx), (m,n)\in\M\times\N$, and $\Pi_{\Omega}$  are all strongly semismooth, $\varphi(\cdot;\J_*)$ is  strongly semismooth. Therefore, we only need to verify the nonsingularity of the matrix in \eqref{second-order-cond} for any $\J\subseteq\J_*$ and any $\H^*\in  \partial \varphi(\bfw^*;\J)$. By the definition of $T^*_+$, we have $\bfx^*_{T^*_+} >{\bf 0}$ and $\bfx^*_{\overline T^*_+}={\bf 0}$ due to $\bfx^*=\max\{{\bf 0}, {\bf z}^*\}$ from \eqref{eta-point}. Hence the matrix in \eqref{second-order-cond} reduces to
\begin{eqnarray*}
\left[
\begin{array}{ccccc} 
\P^*_{T^*_+T^*_+}  & {\bf 0}& {\bf 0}&\tau  {\rm D}(\bfx^*_{T^*_+}){\bf \Theta}_{T^*_+{\cal I}(\J)} \\[1ex]
{\bf 0} & \P^*_{T^*_0T^*_0}({\bf t})  & {\bf 0}&{\bf 0} \\[1ex]
{\bf 0} & {\bf 0} &  {\bf I}_{|T^*_-|} & {\bf 0}  \\ [1ex]
({\rm D}(\bfx^*_{T^*_+}){\bf \Theta}_{T^*_+{\cal I}(\J)})^\top&{\bf 0}&{\bf 0} &   {\bf 0}\\[1ex]
\end{array} 
\right],
\end{eqnarray*} 
where  $\P^*_{T^*_+T^*_+}:=\tau{\rm D} ( {\bf \Theta}_{T^*_+{\cal I}(\J)} {\rm vec}(\W^*_{\J})+ \lambda {\bf 1} ) $, $
\P^*_{T^*_0T^*_0}({\bf t}):={\rm D} ({\bf 1}-{\bf t}+\tau{\bf t}\circ [{\bf \Theta}_{T^*_0{\cal I}(\J)} {\rm vec}(\W^*_{\J})+\lambda {\bf 1}] )$, and $T^*_0:=\{k\in\K: z_k^*=0\}, T^*_-:=\{k\in\K: z_k^*<0\}$. It follows from  \eqref{eta-point} and \eqref{uPu-equ} that $\W^*\geq0$, resulting in the positive definiteness of two diagonal matrices $\P^*_{T^*_+T^*_+}$ and $\P^*_{T^*_0T^*_0}({\bf t})$. With the help of the full column rankness of ${\bf \Theta}_{T^*_+{\cal I}(\J_*)}$,  its sub-matrix ${\bf \Theta}_{T^*_+{\cal I}(\J)}$ is full column rank for any $\J\subseteq\J_*$, leading to the nonsingularity of the above matrix. The remaining proof follows Theorem \ref{the:quadratic}.
  \Halmos\endproof
 
 \begin{figure}[!th]
 \centering
 \includegraphics[width=1.0\textwidth]{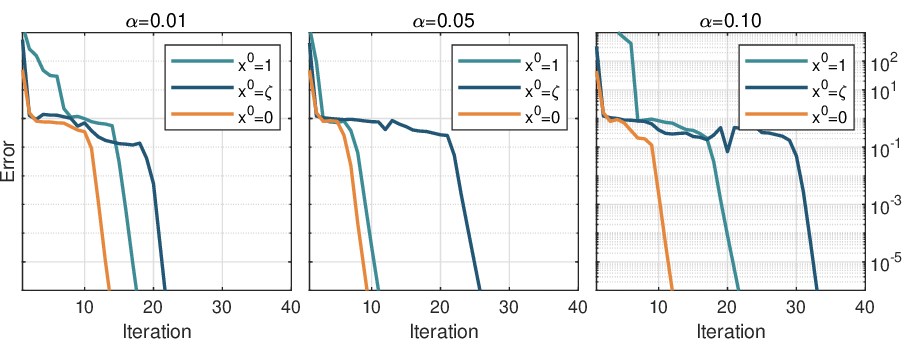}\\
 \includegraphics[width=1.0\textwidth]{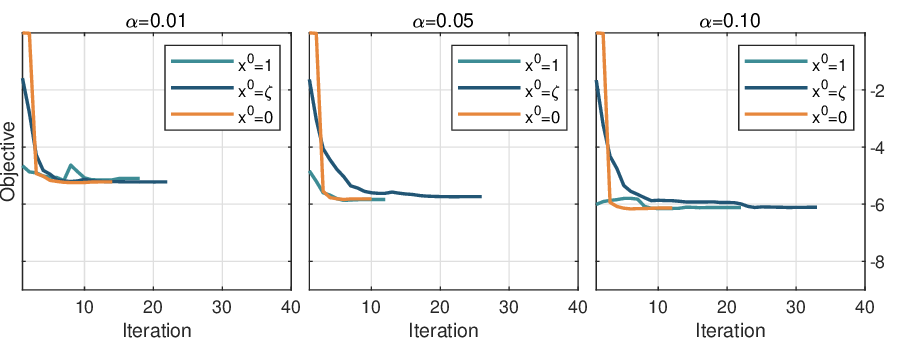} 
  \caption{Effect of the starting points.\label{fig:qcr}}\vspace{-0mm}
 \end{figure}
\subsection{Implementations} The parameters for \nscp\ are set as follows:  $\texttt{maxIt}=10^5, \texttt{tol}= 10^{-6}MN, \rho=100,  \underline\mu=0.125$, $\nu=0.9995,$   $\gamma=1$ if $s=1$ and $\gamma=1+2/s$ if $s>1$,  $\pi=0.5$, and   $\W^0=50\cdot{\bf1}$.  In addition, we set $s=\lceil \alpha N\rceil$ where $\alpha$ is chosen from $\{0.01,0.05,0.1\}$.  To observe the impact of starting points on the proposed algorithm, we use three choices of $\bfx^0$ when solving \eqref{NOP} with $(K,M,N)=(10,1,100)$. The three starting points are presented in Fig. \ref{fig:qcr}, where  ${\boldsymbol \zeta}$ has entries randomly sampled from $[0,1]$.  It can be seen from that all error $\|\F(\bfw^{\ell},U_{T_{\ell}})\|$ declines quickly once the iteration surpasses a certain threshold, demonstrating a fast convergence rate as stated by Corollary \ref{cor-norm-opt}. Since  $\bfx^0={\bf1}$ appears to yield the best overall performance, we fix this initialization for all subsequent numerical experiments.
 
\subsection{Benchmarks}
 We will compare \nscp\ with an {\tt NLP} algorithm developed in \cite{pena2020solving}, two algorithms proposed in \cite{adam2016nonlinear}: regularized algorithm with a convex start  (\reg) and relaxed algorithm (\rel), and  {\tt GUROBI}. All algorithms use the same initial point, $\bfx^0={\bf1}$. Other relevant parameters and implementations are described as follows: 
 \begin{itemize}
\item For {\tt GUROBI}, we  employ it to solve problem \eqref{BIP}. However, it is challenging for {\tt GUROBI} to operate the constraints in $\CD(\bfy)$. So we replace it with the so-called ``big-M" constraints,
\begin{eqnarray*} 
 \eqspace{1.35}
 \begin{array}{lll}
  \left\{   \bfx \in \R^{K} : ~ 
  G_{mn}(\bfx)  \leq (1-y_n) M_n,~\forall~m\in\M,~ n\in\N \right\},
 \end{array}
 \end{eqnarray*}
 where $M_n,n\in\N$ are big positive scalars.  ``Big-M" constraints has been widely used in  \cite{ahmed2008solving, song2014chance, luedtke2014branch}. This value significantly impacts the computational speed of {\tt GUROBI}, and a tighter bound of $M_n$ may lead to a better result \cite{song2014chance, adam2016nonlinear}. Therefore, we set $M_n$ using three different choices.  Moreover, similar to \cite{pena2020solving} to avoid too long computational time, we set a time limit, namely, we terminate {\tt GUROBI} if the consumed time exceeds 10 minutes.  Overall, we include three following variants of  {\tt GUROBI} in the numerical comparison:
 \begin{itemize}
 \item  {\tt GUROBI}${10^2}$:  {\tt GUROBI} under $M_n=10^2$ for any $n\in\N$;
 \item  {\tt GUROBI}${10^3}$: {\tt GUROBI} under $M_n=10^3$ for any $n\in\N$;
 \item  {\tt GUROBI}${10^4}$: {\tt GUROBI} under $M_n=10^4$ for any $n\in\N$.
\end{itemize}  
 \item For {\tt NLP},   there is an approximation parameter $\epsilon$ that has a big influence on the solution quality.   As suggested by \cite{pena2020solving}, this parameter should be adaptively updated by a binary search algorithm. However, we adopt a more effective continuation approach as follows: First, initialize   $P$ values of $\{ \epsilon_1,\epsilon_1,\ldots,\epsilon_{P}\}$ and denote $\bfx(\epsilon_1):=\bfx^0=\bf1$. Then for each step $t=1,2,3,\ldots,P$, under $\epsilon_t$ and initial point $\bfx(\epsilon_{t})$, we employ {\tt NLP} to solve the problem to obtain a solution $\bfx^{t+1}$ and denote $\bfx(\epsilon_{t+1}):=\bfx^{t}$ which is used for step $t+1$.  Finally, we pick the best solution in terms of the lowest objective function values, namely, $(\bfx^{\rm best},\epsilon^{\rm best})={\rm argmin} \{ f(\bfx(\epsilon_{1})),\ldots,f(\bfx(\epsilon_{P}))\}$. This approach is known as the algorithm with continuation \cite{ma2011fixed, hale2008fixed, huang2018robust, adam2016nonlinear}. Therefore, we denote {\tt NLP} with continuation by {\tt NLPC} and report the results of the following two algorithms:
   \begin{itemize}
 \item  {\tt NLPC}: {\tt NLP} with continuation, where  $\epsilon_1=0.1,\epsilon_P=0.001$, and $\epsilon_{t+1}=\epsilon_t(\epsilon_P/\epsilon_1)^{1/(P-1)}$. To accelerate the computation, we set $P=20$ if $M=1$ and $P=10$ otherwise.
 \item  {\tt NLP}: {\tt NLP} under the best approximation parameter, i.e., $\epsilon^{\rm best}$ selected by {\tt NLPC}.
\end{itemize} 
 \item  For \nscp, our main theory involves a parameter $\tau$, which plays a critical role in deciding the solution quality.  Similar to {\tt NLPC}, we set a series of values of $\tau$. However, differing from {\tt NLPC} using $\bfx(\epsilon_{t+1})=\bfx^{t}$ as the initial point for next step $t+1$, we always use $(\bfx^0,\W^0)$ as the initial point for each $\tau_t$. Overall, we report the results of the following algorithms:
    \begin{itemize}
 \item  {\tt SNSCOC}: \nscp\ with continuation. We let $\tau_1=0.5, \tau_P=1.75$, and $\tau_{t+1}=\tau_t(\tau_P/\tau_1)^{1/(P-1)}$, and set $P=25$ if $\alpha=0.01$ and $P=50$ otherwise.
 \item  \nscp: \nscp\ under the best $\tau$ selected by  {\tt SNSCOC}.
\end{itemize} 
 \end{itemize} 
 In the sequel, we choose $\alpha\in\{0.01,0.05,0.10\}$. For each fixed combination of $(\alpha, K, N, M)$, we run 20 trials and record two metrics: objective function value $f(\bfx)$ and computational time in seconds. Each reported factor represents the median of 20 values.

\subsection{Single CCP with i.i.d. data}
 We first employ all algorithms to solve problem  \eqref{NOP} with  $M=1$ and i.i.d. standard normal random variables $\xi_{k}^m,m\in\M,k\in\K$. We have also tested all algorithms to solve problems with non-i.i.d. random variables $\xi_{k}^m$, but omitted the comparison results as they are similar to these for i.i.d. cases. 
 
    \begin{table}[H]
	\renewcommand{\arraystretch}{1.0}\addtolength{\tabcolsep}{0.8pt}
	\caption{Effect of $K$ for the single CCP with $N=100$ and $K\in\{10,30,50\}$.\label{table:k-single}} 
	\centering
		\begin{tabular}{lrrrrrrrrrrrr}
			\hline 

	 	&	\multicolumn{3}{c}{$\alpha=0.01$} &&	\multicolumn{3}{c}{$\alpha=0.05$}	 	 	&&	\multicolumn{3}{c}{$\alpha=0.10$}	 		&		\\\cline{2-4}\cline{6-8}\cline{10-12}
$K$	&	$10$	&	$30$	&	$50$	&&	$10$	&	$30$	&	$50$	&&	$10$	&	$30$	&	$50$	\\\hline 
	&	\multicolumn{11}{c}{$f(\bfx)$}\\\hline
  
  {\tt GUROBI}$10^2$	&	-5.281 	&	-10.821 	&	-14.462 	&&	-5.781 	&	-11.380 	&	-15.112 	&&	-6.206 	&	-11.765 	&	-15.553 	\\
{\tt GUROBI}$10^3$	&	-5.281 	&	-10.821 	&	-14.462 	&&	-5.781 	&	-11.380 	&	-15.112 	&&	-6.206 	&	-11.765 	&	-15.553 	\\
{\tt GUROBI}$10^4$	&	-5.281 	&	-10.821 	&	-14.462 	&&	-5.781 	&	-11.380 	&	-15.112 	&&	-6.206 	&	-11.765 	&	-15.553 	\\
{\tt RegAC}	&	-5.281 	&	-10.814 	&	-14.462 	&&	-5.744 	&	-11.380 	&	-15.112 	&&	-6.202 	&	-11.763 	&	-15.553 	\\
{\tt RelA}	&	-5.281 	&	-10.769 	&	-14.421 	&&	-5.606 	&	-11.217 	&	-14.907 	&&	-5.877 	&	-11.519 	&	-15.422 	\\
{\tt NLPC}	&	-5.145 	&	-10.814 	&	-14.427 	&&	-5.744 	&	-11.326 	&	-14.972 	&&	-6.182 	&	-11.693 	&	-15.475 	\\
{\tt NLP}	&	-5.145 	&	-10.814 	&	-14.427 	&&	-5.744 	&	-11.326 	&	-14.972 	&&	-6.182 	&	-11.693 	&	-15.475 	\\
{\tt SNSCOC}	&	-5.281 	&	-10.821 	&	-14.462 	&&	-5.781 	&	-11.348 	&	-15.112 	&&	-6.193 	&	-11.765 	&	-15.553 	\\
{\tt SNSCO}	&	-5.281 	&	-10.821 	&	-14.462 	&&	-5.781 	&	-11.348 	&	-15.112 	&&	-6.193 	&	-11.765 	&	-15.553 	\\\hline

&       \multicolumn{11}{c}{Time (seconds)}   																			\\\hline
{\tt GUROBI}$10^2$	&	0.329 	&	0.417 	&	0.436 	&&	0.658 	&	0.626 	&	0.641 	&&	2.084 	&	5.488 	&	14.17 	\\
{\tt GUROBI}$10^3$	&	0.288 	&	0.351 	&	0.306 	&&	0.624 	&	0.565 	&	0.600 	&&	1.429 	&	1.551 	&	9.417 	\\
{\tt GUROBI}$10^4$	&	0.197 	&	0.261 	&	0.298 	&&	0.516 	&	0.879 	&	0.692 	&&	0.975 	&	5.153 	&	8.891 	\\
{\tt RegAC}	&	0.230 	&	0.561 	&	0.825 	&&	0.381 	&	0.945 	&	1.224 	&&	0.651 	&	1.528 	&	1.671 	\\
{\tt RelA}	&	0.086 	&	0.301 	&	0.466 	&&	0.172 	&	0.558 	&	0.847 	&&	0.365 	&	0.792 	&	1.162 	\\
{\tt NLPC}	&	0.750 	&	1.026 	&	1.038 	&&	0.837 	&	1.128 	&	0.525 	&&	0.801 	&	0.999 	&	0.429 	\\
{\tt NLP}	&	0.028 	&	0.060 	&	0.079 	&&	0.035 	&	0.040 	&	0.086 	&&	0.032 	&	0.071 	&	0.300 	\\
{\tt SNSCOC}	&	0.210 	&	0.197 	&	0.248 	&&	0.086 	&	0.222 	&	0.423 	&&	0.123 	&	0.312 	&	0.204 	\\
{\tt SNSCO}	&	0.005 	&	0.012 	&	0.004 	&&	0.002 	&	0.003 	&	0.004 	&&	0.002 	&	0.005 	&	0.004 	\\\hline

		\end{tabular}
\end{table}

\begin{table}[b]
	\renewcommand{\arraystretch}{1.0}\addtolength{\tabcolsep}{2.0pt}
	\caption{Effect of $N$ for the single CCP with $K=10$ and $N\in\{200,600,1000\}$.\label{table:n-single}} 
	\centering
		\begin{tabular}{lrrrrrrrrrrrr}
			\hline 

 	&	\multicolumn{3}{c}{$\alpha=0.01$} &&	\multicolumn{3}{c}{$\alpha=0.05$}	 	 	&&	\multicolumn{3}{c}{$\alpha=0.10$}	 		&		\\\cline{2-4}\cline{6-8}\cline{10-12}
 	$N$ &	$200$	&	$600$	&	$1000$	&&	$200$	&	$600$	&	$1000$	&&	$200$	&	$600$	&	$1000$	\\\hline
 	 &	\multicolumn{11}{c}{$f(\bfx)$}\\\hline

{\tt GUROBI}$10^2$	&	-5.178 	&	-5.084 	&	-5.064 	&&	-5.725 	&	-5.668 	&	-5.609 	&&	-6.140 	&	-6.010 	&	-6.021 	\\
{\tt GUROBI}$10^3$	&	-5.178 	&	-5.084 	&	-5.064 	&&	-5.725 	&	-5.668 	&	-5.610 	&&	-6.140 	&	-6.010 	&	-6.029 	\\
{\tt GUROBI}$10^4$	&	-5.178 	&	-5.084 	&	-5.064 	&&	-5.725 	&	-5.668 	&	-5.609 	&&	-6.140 	&	-6.010 	&	-6.028 	\\
{\tt RegAC}	&	-5.178 	&	-5.083 	&	$--$	&&	-5.713 	&	-5.668 	&	$--$	&&	-6.140 	&	-5.998 	&	$--$	\\
{\tt RelA}	&	-4.995 	&	-4.930 	&	-4.876 	&&	-5.542 	&	-5.316 	&	-5.122 	&&	-5.761 	&	-5.428 	&	-5.216 	\\
{\tt NLPC}	&	-5.178 	&	-5.072 	&	-5.044 	&&	-5.645 	&	-5.623 	&	-5.589 	&&	-6.122 	&	-5.984 	&	-5.991 	\\
{\tt NLP}	&	-5.178 	&	-5.072 	&	-5.044 	&&	-5.645 	&	-5.623 	&	-5.589 	&&	-6.122 	&	-5.984 	&	-5.991 	\\
{\tt SNSCOC}	&	-5.178 	&	-5.084 	&	-5.064 	&&	-5.713 	&	-5.660 	&	-5.606 	&&	-6.140 	&	-6.002 	&	-6.031 	\\
{\tt SNSCO}	&	-5.178 	&	-5.084 	&	-5.064 	&&	-5.713 	&	-5.660 	&	-5.606 	&&	-6.140 	&	-6.002 	&	-6.031 	\\\hline

&       \multicolumn{11}{c}{Time (seconds)}   																			\\\hline
{\tt GUROBI}$10^2$	&	0.425 	&	2.195 	&	6.363 	&&	2.034 	&	57.67 	&	600.5 	&&	3.384 	&	600.6 	&	600.5 	\\
{\tt GUROBI}$10^3$	&	0.615 	&	1.212 	&	6.180 	&&	2.111 	&	59.11 	&	600.6 	&&	6.007 	&	600.8 	&	600.7 	\\
{\tt GUROBI}$10^4$	&	0.516 	&	2.280 	&	9.203 	&&	2.112 	&	36.35 	&	600.7 	&&	3.239 	&	601.1 	&	600.9 	\\
{\tt RegAC}	&	1.806 	&	60.51 	&	$--$	&&	5.496 	&	177.3 	&	$--$	&&	7.015 	&	248.1 	&	$--$	\\
{\tt RelA}	&	0.971 	&	12.07 	&	53.83 	&&	2.493 	&	16.73  	&	59.32  	&&	3.053 	&	17.06 	&	61.06 	\\
{\tt NLPC}	&	1.219 	&	3.201 	&	7.294 	&&	1.352 	&	3.984 	&	8.927 	&&	1.459 	&	3.816 	&	8.518 	\\
{\tt NLP}	&	0.050 	&	0.133 	&	0.238 	&&	0.058 	&	0.141 	&	0.277 	&&	0.057 	&	0.140 	&	0.280 	\\
{\tt SNSCOC}	&	0.253 	&	0.302 	&	0.421 	&&	0.154 	&	0.443 	&	0.746 	&&	0.218 	&	0.703 	&	0.914 	\\
{\tt SNSCO}	&	0.008 	&	0.013 	&	0.015 	&&	0.002 	&	0.005 	&	0.008 	&&	0.003 	&	0.014 	&	0.013 	\\\hline

		\end{tabular}
\end{table}

\begin{table}[b]
	\renewcommand{\arraystretch}{1.0}\addtolength{\tabcolsep}{4.0pt}
	\caption{Effect of $N$ for the single CCP with $K=10$ and $N\in\{2000,6000\}$.\label{table:n-single-large}} 
	\centering
		\begin{tabular}{lrrrrrrrrrrrrr}
			\hline 
  	 &	\multicolumn{5}{c}{$f(\bfx)$}&&	\multicolumn{5}{c}{Time (seconds)}\\\cline{2-6}\cline{8-12} 	 
  	 	&\multicolumn{2}{c}{$\alpha=0.05$}	 	 	&&	\multicolumn{2}{c}{$\alpha=0.10$} &&\multicolumn{2}{c}{$\alpha=0.05$}	 	 	&&	\multicolumn{2}{c}{$\alpha=0.10$}	 \\\cline{2-3}\cline{5-6}\cline{8-9}\cline{11-12}
 	$N$ &	$2000$	&	$6000$	&&	$2000$	&	$6000$ 
 	&&	$2000$	&	$6000$	&&	$2000$	&	$6000$ \\\hline	 
{\tt GUROBI}$10^2$	&	-5.601 	&	-5.552 	&&	-5.983 	&	-5.942 	&&	600.4 	&	600.5 	&&	600.4 	&	600.5 	\\
{\tt GUROBI}$10^3$	&	-5.595 	&	-5.551 	&&	-5.983 	&	-5.940 	&&	600.5 	&	600.5 	&&	600.4 	&	600.5 	\\
{\tt GUROBI}$10^4$	&	-5.593 	&	-5.547 	&&	-5.985 	&	-5.943 	&&	600.5 	&	600.6 	&&	600.5 	&	600.5 	\\
{\tt NLPC}	&	-5.581 	&	-5.548 	&&	-5.949 	&	-5.943 	&&	44.20 	&	212.3 	&&	28.94 	&	291.6 	\\
{\tt NLP}	&	-5.581 	&	-5.548 	&&	-5.949 	&	-5.943 	&&	1.628 	&	5.963 	&&	0.883 	&	12.14 	\\
{\tt SNSCOC}	&	-5.600 	&	-5.559 	&&	-5.987 	&	-5.945 	&&	1.082 	&	6.308 	&&	1.380 	&	6.831 	\\
{\tt SNSCO}	&	-5.600 	&	-5.559 	&&	-5.987 	&	-5.945 	&&	0.011 	&	0.123 	&&	0.024 	&	0.136 	\\\hline

		\end{tabular}
\end{table}

   {\it a) Effect of $K$}. To see this, we fix $N=100$ and choose $K\in\{10,30,50\}$. Results are reported in Table \ref{table:k-single}. Concerning objective function values,  {\tt GUROBI} always obtains the lowest ones, followed by  {\tt SNSCO} and \reg. In contrast, \rel\  delivers the worst results. As for the computational speed, under the best choice of parameters $\epsilon$ and $\tau$, both {\tt SNSCO} and {\tt NLP} run much faster than the other algorithms. Without these best choices, {\tt SNSCO} is the winner. 
   
{\it b) Effect of $N$}. To observe this effect, by fixing $K=10$, we select $N\in\{200,600,1000\}$ and present the relevant results in Table \ref{table:n-single}, where symbol `$--$' means that we do not report the results for \reg\ as it consumes a long time to solve the problem when $N=1000$.  In terms of solution quality,  {\tt GUROBI}  and  {\tt SNSCO} achieve the lowest objective function values, followed by \reg\ and {\tt NLP}. Regarding computational speed, {\tt SNSCO} and {\tt NLP} always run the fastest.  Three {\tt GUROBI} algorithms cost about 600 seconds when $N\geq 600$ and $\alpha=0.1$ since they reach the maximal time limit (i.e., 10 minutes). We note that when $N=1000$ and $\alpha=0.1$, {\tt SNSCO} generates slightly better solutions than {\tt GUROBI}  as the latter reaches the maximal time limit without obtaining optimal solutions.  This indicates that {\tt SNSCO} is more advantageous for large scale computation compared to {\tt GUROBI}. Such as assertion is supported by the results in Table \ref{table:n-single-large}, where {\tt SNSCO} produces the lowest objective values and runs the fastest among all algorithms.

\subsection{Joint CCP with non i.i.d. data}
In the subsequent numerical experiments, we compare all algorithms for solving the joint CCP (namely, $M>1$),  but exclude \reg\ and \rel\  as they were designed to address the single CCP. Moreover, to test the algorithms solving problems with more complicated instances,  we now employ them to solve \eqref{NOP} with non i.i.d. standard normal random variables $\xi_{k}^m,m\in\M,k\in\K$. In this scenario, the optimal solution to \eqref{NOP} remains elusive, and $\bfx^{\rm opt}$ as defined in \eqref{opt-solution} is no longer valid. We adopt the data generation from \cite{hong2011sequential}, that is,  $\xi_{k}^m,m\in\M,k\in\K$ are normal random variables with mean $k/K$ and variance $1$, and $ {\rm Cov}(\xi_{k}^m,\xi_{k}^{m'}) = 0.5$ for if $m\neq m'\in\M$ and  $ {\rm Cov}(\xi_{k}^m,\xi_{k'}^{m'}) = 0$ if $k\neq k'$ for any $m,m'\in\M$ and $k,k'\in\K$.

 \begin{table}[t]
	\renewcommand{\arraystretch}{1.0}\addtolength{\tabcolsep}{2.0pt}
	\caption{Effect of $K$ for the joint CCP with $M=10, N=100$ and $K\in\{10,20,30\}$.\label{table:k-joint}} 
	\centering
		\begin{tabular}{lrrrrrrrrrrrr}
			\hline 

 	&	\multicolumn{3}{c}{$\alpha=0.01$} &&	\multicolumn{3}{c}{$\alpha=0.05$}	 	 	&&	\multicolumn{3}{c}{$\alpha=0.10$}	 		&		\\\cline{2-4}\cline{6-8}\cline{10-12}
 	$K$ &	$10$	&	$20$	&	$30$	&&	$10$	&	$20$	&	$30$	&&	$10$	&	$20$	&	$30$	\\\hline
 	 &	\multicolumn{11}{c}{$f(\bfx)$}\\\hline
{\tt GUROBI}$10^2$	&	-4.199 	&	-6.778 	&	-8.947 	&&	-4.539 	&	-7.122 	&	-9.264 	&&	-4.699 	&	-7.356 	&	-9.454 	\\
{\tt GUROBI}$10^3$	&	-4.199 	&	-6.778 	&	-8.947 	&&	-4.539 	&	-7.122 	&	-9.264 	&&	-4.699 	&	-7.356 	&	-9.454 	\\
{\tt GUROBI}$10^4$	&	-4.199 	&	-6.778 	&	-8.947 	&&	-4.539 	&	-7.122 	&	-9.264 	&&	-4.699 	&	-7.356 	&	-9.454 	\\
{\tt NLPC}	&	-4.141 	&	-6.775 	&	-8.898 	&&	-4.527 	&	-7.000 	&	-9.211 	&&	-4.631 	&	-7.312 	&	-9.380 	\\
{\tt NLP}	&	-4.141 	&	-6.775 	&	-8.898 	&&	-4.527 	&	-7.000 	&	-9.211 	&&	-4.631 	&	-7.312 	&	-9.380 	\\
{\tt SNSCOC}	&	-4.199 	&	-6.778 	&	-8.947 	&&	-4.539 	&	-7.122 	&	-9.264 	&&	-4.693 	&	-7.356 	&	-9.454 	\\
{\tt SNSCO}	&	-4.199 	&	-6.778 	&	-8.947 	&&	-4.539 	&	-7.122 	&	-9.264 	&&	-4.693 	&	-7.356 	&	-9.454 	\\\hline

&       \multicolumn{11}{c}{Time (seconds)}   																			\\\hline
{\tt GUROBI}$10^2$	&	0.575 	&	1.039 	&	1.372 	&&	1.690 	&	1.891 	&	3.154 	&&	5.798 	&	6.760 	&	14.42 	\\
{\tt GUROBI}$10^3$	&	0.599 	&	1.056 	&	1.541 	&&	1.621 	&	2.517 	&	4.623 	&&	6.430 	&	9.322 	&	45.67 	\\
{\tt GUROBI}$10^4$	&	0.618 	&	0.895 	&	1.080 	&&	2.070 	&	2.727 	&	3.778 	&&	6.854 	&	18.31 	&	10.89 	\\
{\tt NLPC}	&	19.16 	&	27.80 	&	148.0 	&&	13.86 	&	30.98 	&	172.1 	&&	14.57 	&	38.14 	&	158.4 	\\
{\tt NLP}	&	0.420 	&	0.972 	&	1.822 	&&	0.280 	&	1.340 	&	4.264 	&&	0.711 	&	0.609 	&	5.561 	\\
{\tt SNSCOC}	&	0.358 	&	0.411 	&	0.628 	&&	0.577 	&	0.545 	&	0.946 	&&	0.473 	&	0.541 	&	1.058 	\\
{\tt SNSCO}	&	0.013 	&	0.015 	&	0.016 	&&	0.015 	&	0.010 	&	0.016 	&&	0.013 	&	0.006 	&	0.017 	\\\hline

		\end{tabular}
\end{table}

{\it c) Effect of $K$.}  We set $(M,N)=(10,100)$ and vary $K$ among $\{10,20,30\}$. In these experiments, we opt for smaller values of $K$ due to the considerable time required by {\tt NLPC} to solve the problem when $K>30$ and $\alpha\geq0.05$. Table \ref{table:k-joint} illustrates that the three {\tt GUROBI} solvers and two \nscp\ algorithms yield the lowest objective function values. Regarding computational speed,   {\tt NLPC} consumes quite a long time when $K$ gets increased. Once again, \nscp\  runs much faster than the others.

\begin{table}[t]
	\renewcommand{\arraystretch}{1.0}\addtolength{\tabcolsep}{2.0pt}
	\caption{Effect of $N$ for the joint CCP with $K=10, M=10$ and $N\in\{200,600,1000\}$.\label{table:n-joint}} 
	\centering
		\begin{tabular}{lrrrrrrrrrrrr}
			\hline 

 	&	\multicolumn{3}{c}{$\alpha=0.01$} &&	\multicolumn{3}{c}{$\alpha=0.05$}	 	 	&&	\multicolumn{3}{c}{$\alpha=0.10$}	 		&		\\\cline{2-4}\cline{6-8}\cline{10-12}
 	$N$ &	$200$	&	$600$	&	$1000$	&&	$200$	&	$600$	&	$1000$	&&	$200$	&	$600$	&	$1000$	\\\hline
 	 &	\multicolumn{11}{c}{$f(\bfx)$}\\\hline

{\tt GUROBI}$10^2$	&	-4.198 	&	-4.075 	&	-4.053 	&&	-4.482 	&	-4.408 	&	-4.405 	&&	-4.654 	&	-4.606 	&	-4.591 	\\
{\tt GUROBI}$10^3$	&	-4.198 	&	-4.075 	&	-4.053 	&&	-4.482 	&	-4.407 	&	-4.406 	&&	-4.654 	&	-4.594 	&	-4.593 	\\
{\tt GUROBI}$10^4$	&	-4.198 	&	-4.075 	&	-4.053 	&&	-4.482 	&	-4.408 	&	-4.406 	&&	-4.654 	&	-4.609 	&	-4.587 	\\
{\tt NLPC}	&	-4.181 	&	-4.052 	&	-4.026 	&&	-4.457 	&	-4.383 	&	-4.391 	&&	-4.619 	&	-4.569 	&	-4.579 	\\
{\tt NLP}	&	-4.181 	&	-4.052 	&	-4.026 	&&	-4.457 	&	-4.383 	&	-4.391 	&&	-4.619 	&	-4.569 	&	-4.579 	\\
{\tt SNSCOC}	&	-4.198 	&	-4.075 	&	-4.053 	&&	-4.475 	&	-4.404 	&	-4.406 	&&	-4.652 	&	-4.606 	&	-4.593 	\\
{\tt SNSCO}	&	-4.198 	&	-4.075 	&	-4.053 	&&	-4.475 	&	-4.404 	&	-4.406 	&&	-4.652 	&	-4.606 	&	-4.593 	\\\hline

&       \multicolumn{11}{c}{Time (seconds)}   																			\\\hline
{\tt GUROBI}$10^2$	&	1.552 	&	13.97 	&	73.27 	&&	11.18 	&	526.2 	&	600.3 	&&	37.14 	&	600.3 	&	600.3 	\\
{\tt GUROBI}$10^3$	&	1.489 	&	10.82 	&	53.48 	&&	11.89 	&	600.2 	&	600.3 	&&	36.22 	&	600.3 	&	600.4 	\\
{\tt GUROBI}$10^4$	&	1.471 	&	10.46 	&	112.8 	&&	10.96 	&	600.3 	&	600.4 	&&	61.10 	&	600.3 	&	600.4 	\\
{\tt NLPC}	&	14.51 	&	17.08 	&	51.00 	&&	13.82 	&	23.68 	&	41.92 	&&	10.52 	&	28.65 	&	30.67 	\\
{\tt NLP}	&	0.358 	&	1.060 	&	2.403 	&&	0.304 	&	0.833 	&	2.000 	&&	0.458 	&	1.061 	&	1.797 	\\
{\tt SNSCOC}	&	0.358 	&	0.812 	&	1.235 	&&	0.673 	&	3.186 	&	5.119 	&&	0.750 	&	4.324 	&	6.177 	\\
{\tt SNSCO}	&	0.016 	&	0.022 	&	0.042 	&&	0.010 	&	0.051 	&	0.042 	&&	0.014 	&	0.065 	&	0.069 	\\\hline

		\end{tabular}
\end{table}

\begin{table}[t]
	\renewcommand{\arraystretch}{1.0}\addtolength{\tabcolsep}{2.0pt}
	\caption{Effect of $M$ for the joint CCP with $K=10, N=100$ and $M\in\{20,60,100\}$.\label{table:m-joint}} 
	\centering
		\begin{tabular}{lrrrrrrrrrrrr}
			\hline 

 	&	\multicolumn{3}{c}{$\alpha=0.01$} &&	\multicolumn{3}{c}{$\alpha=0.05$}	 	 	&&	\multicolumn{3}{c}{$\alpha=0.10$}	 		&		\\\cline{2-4}\cline{6-8}\cline{10-12}
 	$M$ &	$20$	&	$60$	&	$100$	&&	$20$	&	$60$	&	$100$	&&	$20$	&	$60$	&	$100$\\\hline
 	 &	\multicolumn{11}{c}{$f(\bfx)$}\\\hline

{\tt GUROBI}$10^2$	&	-4.139 	&	-3.888 	&	-3.868 	&&	-4.355 	&	-4.107 	&	-4.057 	&&	-4.503 	&	-4.284 	&	-4.199 	\\
{\tt GUROBI}$10^3$	&	-4.139 	&	-3.888 	&	-3.868 	&&	-4.355 	&	-4.107 	&	-4.057 	&&	-4.503 	&	-4.284 	&	-4.199 	\\
{\tt GUROBI}$10^4$	&	-4.139 	&	-3.888 	&	-3.868 	&&	-4.355 	&	-4.107 	&	-4.057 	&&	-4.503 	&	-4.284 	&	-4.199 	\\
{\tt NLPC}	&	-4.137 	&	-3.835 	&	-3.856 	&&	-4.286 	&	-4.077 	&	-4.033 	&&	-4.447 	&	-4.226 	&	-4.156 	\\
{\tt NLP}	&	-4.137 	&	-3.835 	&	-3.856 	&&	-4.286 	&	-4.077 	&	-4.033 	&&	-4.447 	&	-4.226 	&	-4.156 	\\
{\tt SNSCOC}	&	-4.139 	&	-3.888 	&	-3.868 	&&	-4.355 	&	-4.107 	&	-4.057 	&&	-4.503 	&	-4.284 	&	-4.199 	\\
{\tt SNSCO}	&	-4.139 	&	-3.888 	&	-3.868 	&&	-4.355 	&	-4.107 	&	-4.057 	&&	-4.503 	&	-4.284 	&	-4.199 	\\\hline

&       \multicolumn{11}{c}{Time (seconds)}   																			\\\hline
{\tt GUROBI}$10^2$	&	1.027 	&	2.557 	&	4.716 	&&	2.775 	&	5.441 	&	9.024 	&&	9.009 	&	22.66 	&	17.73 	\\
{\tt GUROBI}$10^3$	&	0.995 	&	2.792 	&	4.993 	&&	2.762 	&	7.318 	&	9.649 	&&	9.363 	&	30.22 	&	18.23 	\\
{\tt GUROBI}$10^4$	&	1.038 	&	2.579 	&	4.541 	&&	3.999 	&	8.148 	&	14.81 	&&	9.440 	&	27.06 	&	115.4 	\\
{\tt NLPC}	&	16.54 	&	18.56 	&	19.44 	&&	15.65 	&	15.83 	&	25.10 	&&	13.80 	&	13.86 	&	24.24 	\\
{\tt NLP}	&	0.353 	&	0.479 	&	0.708 	&&	0.530 	&	0.474 	&	1.109 	&&	0.402 	&	0.633 	&	0.789 	\\
{\tt SNSCOC}	&	0.354 	&	1.050 	&	1.351 	&&	0.725 	&	1.606 	&	2.520 	&&	0.727 	&	2.187 	&	2.568 	\\
{\tt SNSCO}	&	0.017 	&	0.027 	&	0.037 	&&	0.008 	&	0.023 	&	0.023 	&&	0.014 	&	0.082 	&	0.042 	\\\hline

		\end{tabular}
\end{table}

{\it d) Effect of $N$.} We select $N\in\{200,600,1000\}$ while fixing $K=10$ and $M=10$. The data are summarized in Table \ref{table:n-joint}. When $\alpha\geq0.05$ and $N\geq600$, three {\tt GUROBI} solvers always reach the maximal time limit and thus produce slightly worse solutions than those of  {\tt SNSCO}. Taking the case of $(\alpha, N)=(0.1,1000)$ as an instance,  {\tt SNSCO} spends 0.069 seconds solving the problem, while {\tt NLP} and {\tt GUROBI} need 1.797 and 600 seconds, respectively.   These results highlight the efficiency of {\tt SNSCO} in handling problems with large-scale samples $N$.

{\it e) Effect of $M$.} By fixing $(K,N)=(10,100)$ and choosing $M\in\{20,40,60\}$, we obtain the results in Table \ref{table:m-joint}. Three {\tt GUROBI} solvers and two \nscp\ algorithms obtain identical objective function values. Meanwhile,  two {\tt NLP} algorithms achieve values close to the optimal ones. In terms of computational efficiency, \nscp\ runs the fastest, and {\tt NLP} is the runner-up.

\section{Conclusion}
The $0/1$ loss function ideally characterizes the constraints of SAA. However, due to its discontinuous nature, it has impeded the development of numerical algorithms for solving SAA for a long time. In this paper, we studied a general $0/1$ constrained optimization problem, directly applicable to addressing SAA. One key factor contributing to this success was the derivation of the normal cone to the feasible set. Another crucial factor was the establishment of the $\tau$-stationary equations, a type of optimality condition that enables us to leverage the semismooth Newton-type method. {
The theorems regarding the optimality and the algorithm can be extended to encompass a broader case. For instance, we could incorporate additional equality or inequality constraints \cite{hong2011sequential,curtis2018sequential} into \eqref{SCP}. However, we may need to reformulate the complementarity conditions as equations using nonlinear complementarity problem functions.  
This adjustment would allow us to continue to use semismooth Newton-type methods, worthy of further investigation.}


 \begin{APPENDIX}{Proof of Lemma \ref{lemma-neighbour}}
\proof{Proof} a) Let $\Gamma_+^*, \Gamma_-^*$, and $\Gamma_0^*$ be defined for $\Z^* $, while let   $\Gamma_+, \Gamma_-$, and $\Gamma_0$ be defined for $\L $ as
\begin{eqnarray}\label{Lambda-Gamma0}
 \eqspace{1.35}
\begin{array}{ccl}
\Gamma_+&=&\left\{n\in{\N} :~\L _{:n}^{\max}>0\right\},\\
\Gamma_-&=&\left\{n\in{\N} :~\L _{:n}^{\max}<0\right\},\\
\Gamma_0&=&\left\{n\in{\N} :~\L _{:n}^{\max}=0\right\}.
\end{array}
\end{eqnarray}
Similar to \eqref{T-z}, let $\Gamma_{s}\subseteq  \Gamma_+$ extract  $s$ indices in $\Gamma_+$ that correspond to the first $s$ largest elements in $\{\|{\L}_{:n}^+\|:n\in\Gamma_+\}$.  Moreover, we define $\J_*$  as \eqref{gamma*} and $\J$   as
   \begin{eqnarray} \label{def-J-Lambda}
  \eqspace{1.35}
     \begin{array}{rll}
     \J:=U_T=\{(m,n): \Lambda _{mn}\geq 0, n\in T\},~~T\in\T(\L ,s).
     \end{array} 
  \end{eqnarray}
It follows from \eqref{sta-eq-1} that a \ts\ $\bfw^*$ satisfies $\Gamma_0^*\in \T(\L^*;s)$,  $  U_{\Gamma_0^*}=\J_*$, and 
\begin{eqnarray}\label{sta-eq-1-11}
  \eqspace{1.35}
     \begin{array}{rll}
   \varphi(\bfw^*,\J_*)={\bf 0},~~\Z^*_{\J_*}={\bf 0},~~ \W ^*_{\overline \J_*}={\bf 0}.
     \end{array}
  \end{eqnarray}
Consider any $\bfw\in {\mathbb N} (\bfw^*,\eta_*)=: {\mathbb N}^*$  with a sufficiently small radius $\eta_*>0$. For such $\bfw$, we define $\Z$ and $\L $ by \eqref{U-Gamma} and $ \J=U_T$ by \eqref{def-J-Lambda}.  To show \eqref{gw*-0}, we need to prove
\begin{eqnarray}\label{gw*-0-1}
     \begin{array}{rll}
      \varphi(\bfw^*,\J)={\bf 0},~~
   \Z^*_{\J}={\bf 0},~~
     \W ^*_{\overline \J}={\bf 0}.
     \end{array}
  \end{eqnarray}
In the sequel, we aim to prove  
\begin{eqnarray}\label{gw*-0-2}
\J\subseteq \J_*, ~~\W ^*_{\overline \J}={\bf 0}.
  \end{eqnarray}
This is because if \eqref{gw*-0-2} holds then   \eqref{gw*-0-1} can be ensured due to $\varphi(\bfw^*,\J)=  \varphi(\bfw^*,\J_*)={\bf 0}. $ 
\begin{table}[H]
	\renewcommand{\arraystretch}{1.25}\addtolength{\tabcolsep}{18pt}
	\caption{Index set decomposition.\label{ind-decom}}
		\begin{tabular}{l|ccc}
			\hline
 	   	 &\multicolumn{3}{|c}{${n\in\N}$}	\\\cline{2-4}   
 	   & $n\in\Gamma_0^*$ &$n\in\Gamma_+^*$&$n\in\Gamma_-^*$\\	\hline 
 \multirow{4}{*}{$m\in\M$}	   & $ (m,n)\in \J_*:\left\{\begin{array}{r}
 Z_{mn}^*=0\\
  W_{mn}^*\geq 0\\
  \Lambda _{mn}^*\geq 0	   
\end{array} 	  \right.  $
& \multirow{4}{*}{$\left\{\begin{array}{r}
(\Z^*)_{:n}^{\max}>0\\
 W_{mn}^* = 0\\
  (\L^*)_{:n}^{\max}>0
\end{array}\right.$}
& \multirow{4}{*}{$\left\{\begin{array}{r}
(\Z^*)_{:n}^{\max}<0\\
 W_{mn}^* = 0\\
  (\L^*)_{:n}^{\max}<0
\end{array}\right.$}
\\
&   $ (m,n)\in \J_-:\left\{\begin{array}{r}
 Z_{mn}^*<0\\
  W_{mn}^*= 0\\
  \Lambda _{mn}^*< 0	   
\end{array}\right. 	    $
&&\\
\hline
		\end{tabular}
\end{table}

 Suppose there is an index $(m_0,n_0)\in \J$ but $(m_0,n_0)\notin \J_*$. We decompose the entire index set ${\M}\times {\N}$ as Table \ref{ind-decom}, where we used three facts: $\L^*=\Z^*+\tau W^*$,  \eqref{gamma*}, and Proposition \ref{pro-eta} that \begin{eqnarray*} 
  \eqspace{1.35}
     \begin{array}{rll}\W^*_{:\overline \Gamma_0^*}={\bf 0},\qquad {\bf 0}\geq \Z^* _{:\Gamma_0^*} \perp \W^*_{:\Gamma_0^*} \geq {\bf 0}.   \end{array} 
  \end{eqnarray*} 
  Since $\eta_*>0$ can be set sufficiently small, $\bfw$ can be close to $\bfw^*$, and so is $\L $ to $\L^*$, which shows
 \begin{eqnarray}\label{gamma-gamma-*} 
  \eqspace{1.35}
     \begin{array}{rll}
\forall~n\in\Gamma_+^*:~~(\Z^*)_{:n}^{\max}=(\L^*)_{:n}^{\max}>0~~\Longrightarrow ~~(\L )_{:n}^{\max}>0,\\
\forall~n\in\Gamma_-^*:~~(\Z^*)_{:n}^{\max}=(\L^*)_{:n}^{\max}<0~~\Longrightarrow ~~(\L )_{:n}^{\max}<0.\\
     \end{array} 
  \end{eqnarray}
The definition of $\T(\L ,s)$ in \eqref{T-z} implies $T=\Gamma_0 \cup (\Gamma_+\setminus \Gamma_s)$ for any given $T\in\T(\L ,s)$. Condition \eqref{gamma-gamma-*} suffices to $\Gamma_+^* \subseteq \Gamma_s\subseteq \Gamma_+$ and $\Gamma_-^* \subseteq \Gamma_-$. Therefore, we must have $T\subseteq \Gamma_0^*$. Now combining this condition, Table \ref{ind-decom}, \eqref{def-J-Lambda},  $(m_0,n_0)\in \J=U_T$, and $(m_0,n_0)\notin \J_*$,  we can claim that $(m_0,n_0)\in \J_-$, thereby resulting in $\Lambda_{m_0n_0}^*{<}0$. However, since  $\L $ is relatively close to $\L^*$, we have $\Lambda_{m_0n_0}{<}0$, which contradicts to $(m_0,n_0)\in \J$ in \eqref{def-J-Lambda}. So we prove $\J\subseteq \J_*$, the first condition in \eqref{gw*-0-2} 

Finally, we prove $\W ^*_{\overline \J}={\bf 0}$. If $\|\Z^*\|_0^+<s$, then $\W ^*={\bf 0}$ by Proposition \ref{pro-eta}. The conclusion is clearly true. We focus on  $\|\Z^*\|_0^+=s$.  This indicates $|\Gamma_+^*|=\|\Z^*\|_0^+=s$. Again, by \eqref{gamma-gamma-*}, we can derive
\begin{eqnarray}\label{relation-diff-sets}
 \eqspace{1.35}
\begin{array}{ccl}
\Gamma_+^*=\Gamma_s,~~\Gamma_-^* \subseteq \Gamma_-, ~~\Gamma_0^*\supseteq T=\Gamma_0 \cup (\Gamma_+\setminus \Gamma_s).
\end{array}
\end{eqnarray}
To show $\W ^*_{\overline \J}={\bf 0}$,  we only need to check $\W ^*_{\J_* \setminus \J}={\bf 0}$ owing to Table  \ref{ind-decom} and $\J\subseteq \J_*$. It follows from Table  \ref{ind-decom} that $\W ^*\geq {\bf 0}$. Suppose that there exists $(m_0,n_0)\in \J_* \setminus \J$ such that $W_{m_0n_0}^* > 0$. This implies $\Lambda_{m_0n_0}^* > 0$ and thus $\Lambda_{m_0n_0}>0$ because $\L $ is close to $\L^*$. Then we have $n_0\notin\Gamma_-$. In addition, suppose $n_0\in T$, then \eqref{def-J-Lambda} means $(m_0,n_0)\in \J$, a contradiction. So $n_0\notin T$. Overall, $n_0\in \Gamma_s=\Gamma_+^*$   by \eqref{relation-diff-sets}. However, $ \J_* \setminus \J\subseteq\J_*$, we must have  $n_0\in\Gamma_0^*$ due to $(m_0,n_0)\in \J_*$, which contradicts with $n_0\in  \Gamma_+^*$.  Hence, $\J_* \setminus \J=\emptyset$ and  $\J = \J_*$, there by $W ^*_{\overline \J}={\bf 0}$ from \eqref{sta-eq-1-11}.  
\Halmos\endproof

 \end{APPENDIX}
%
%

\section*{Acknowledgments.}
This work was supported by the National Key R\&D Program of China (2023YFA1011100), the Fundamental Research Funds for the Central Universities, and the National Natural Science Foundation of China (12271309). The authors express their sincere gratitude to the area editor, associate editor, and two referees for their constructive comments, which have significantly improved the quality of the paper. The authors also extend their appreciation to Mr. Shuai Li for his assistance in implementing one of the algorithms used in the numerical experiments.


\bibliographystyle{informs2014} 
\bibliography{references} 


\end{document}